\documentclass{article}
\usepackage[english]{babel}
\usepackage[T1]{fontenc}
\usepackage[utf8]{inputenc}
\usepackage{amsthm}
\usepackage{amsmath}
\usepackage{amssymb}
\usepackage{palatino}
\usepackage{graphicx}
\usepackage[colorlinks=true, citecolor =blue]{hyperref}
\usepackage{dsfont}
\usepackage[all]{xy}
\usepackage[top=4cm,bottom=4cm,left=3cm,right=3cm]{geometry}
\usepackage{enumitem}
\usepackage{mathtools}
\usepackage{subcaption}
\usepackage{authblk}
\usepackage{bbm}
\usepackage{verbatim}
\setlength{\parskip}{\medskipamount}
\setlength{\parindent}{0pt}

\newcommand{\R}{\mathds{R}}
\newcommand{\N}{\mathds{N}}
\newcommand{\Z}{\mathds{Z}}

\newcommand{\sg}{\sigma}
\newcommand{\p}{\partial}
\newcommand{\eps}{\varepsilon}
\newcommand{\Rp}{\mathcal{C}}

\newtheorem{theorem}{Theorem}[section]
\newtheorem{lemma}[theorem]{Lemma}%[%section]
\newtheorem{definition}[theorem]{Definition}
\newtheorem{remark}[theorem]{Remark}
\newtheorem{proposition}[theorem]{Proposition}
\newtheorem{corollary}[theorem]{Corollary}
\usepackage{tikz}

\numberwithin{equation}{section}%

\title{An infinite-times renewal equation}
\author[a]{Xu'an Dou\thanks{Email :
dxa@pku.edu.cn}}
\author[b]{Benoît Perthame\thanks{Email : benoit.perthame@sorbonne-universite.fr}}
\author[c]{Chenjiayue Qi\thanks{Email :
jiayue@pku.edu.cn}}
\author[d]{Delphine Salort\thanks{Email : delphine.salort@sorbonne-universite.fr}}
\author[a]{Zhennan Zhou\thanks{Email : zhennan@bicmr.pku.edu.cn}}

\affil[a]{Beijing International Center for Mathematical Research, Peking University, Beijing, 100871, China.}
\affil[b]{Sorbonne Université, CNRS, Inria, Université Paris Cit\'e, Laboratoire Jacques-Louis Lions, 4 Place Jussieu, 75005 Paris, France.}
\affil[c]{Department of Philosophy, Peking University, Beijing, 100871, China.}
\affil[d]{Sorbonne Université, CNRS, Laboratoire de Biologie Computationnelle et Quantitative, UMR 7238. 4 Place Jussieu, 75005 Paris, France.}

\date{\today}

\begin{document}

\maketitle

\begin{abstract}

In neuroscience, the time elapsed since the last discharge has been used to predict the probability of the next discharge. Such predictions can be improved taking into account the last two discharge times, and possibly more. Such multi-time processes arise in many other areas and there is no universal limitation on the number of times to be used. This observation leads us to study the infinite-times renewal equation as a simple model to understand the meaning and properties of such partial differential equations depending on an infinite number of variables.

We define two notions of solutions, prove existence and uniqueness of solutions, possibly measures. We also prove the long time convergence, with exponential rate, to the steady state in different, strong or weak, topologies depending on assumptions on the coefficients.
\end{abstract}

\vskip .7cm

\noindent{\makebox[1in]\hrulefill}\newline
2010 \textit{Mathematics Subject Classification.}  35B40, 35F20, 35R09, 92B20
\newline\textit{Keywords and phrases.} Renewal equation; Doeblin theory; Optimal transport; Mathematical neuroscience;   Structured equations;

%-------------------------------
\section{Introduction}
%-------------------------------

A number of physical and biological processes take into account several successive events, such as the spike times of a neuron, seismic sequences, and more generally multi-time renewal processes. Using the language of partial differential equations (PDE)  to describe the probability distribution  of such processes at time $t$, which we denote by $n_\infty(t, s_1,s_2,...)$, we are interested in the infinite-times renewal equation, where the renewal rate is denoted by $p_\infty$. It is set in the domain $0\leq s_1\leq s_2\leq ...$, where $s_i$ refers to the elapsed time of the $i$-th recent event counting from present to past, and can be written as,
\begin{equation}
\label{eq:infinite}
\left\{
\begin{matrix*}[l]
\partial_t n_\infty+ \displaystyle \sum_{i=1}^\infty \partial_{s_i} n_\infty +p_\infty(s_1,s_2,...) n_\infty=0, 
\\[5pt]
n_\infty(t,s_1=0,s_2,...)= \int_{u=0}^\infty p_\infty(s_2,..., s_{K}, ...,u)n_\infty(t,s_2,..., s_{K},...,u)\,du.
\end{matrix*}
\right.
\end{equation}
This writing is formal since there is no 'last variable' $u$ to integrate with. Our aim is to define precisely the notion of solutions, in particular the meaning of the boundary condition, and to prove uniqueness. We build these solutions  taking  the limit as $N\to \infty$ of the $N$-times renewal equation,
\begin{equation}
\label{eqKTR}
\left\{
\begin{matrix*}[l]
\partial_t n_N+ \displaystyle \sum_{i=1}^N \partial_{s_i} n_N +p_N([s]_N)n_N=0,
\\[5pt]
n_N(t,s_1=0, s_2,..., s_N)= \int_{u=0}^\infty p_N( s_2,..., s_N,u) \, n_N(t, s_2,..., s_N,u)\,du.
\end{matrix*}
\right.
\end{equation}
For the applications we have in mind, the existence of a limit justifies to restrict the physical description to an arbitrary value $N$, since the precise value does not change too much after some range. Furthermore, we show that this approximation $N\to \infty$ holds uniformly in time.

\textbf{Motivations from biology} The boundary condition in \eqref{eqKTR} means that after renewal, a new time is initiated and all labels are shifted of $+1$ thus the 'oldest' is forgotten and the penultimate becomes the last. This process can be used in neuroscience where $n(t,s)$ describes the probability to find a neuron with time $s$ elapsed since the last discharge~\cite{PGSchwalger, PPD, PPD2,canizo2019asymptotic}. When the firing rate depends on the $N$ last events, we find the $N$-times renewal equation as proposed for $N=2$ in~\cite{perthameST2022}. The $2$-times renewal equation has also been used for establishing  the efficacy of contact tracing during an epidemic spread~\cite{ferretti2020quantifying}. In these applications, the variable $s_i$ refers to age but in many other areas multiple structures occur with different velocities, leading to more general equations under the form
\[
\partial_t n +  \sum_{i=1}^N \partial_{s_i}\big[ g_i([s]_N) n\big] +p_N([s]_N, n_N)n_N=0.
\]
See~\cite{Jaeger2015}  for instance. Our analysis could cover such general settings but for the sake of simplicity, we keep the forms \eqref{eq:infinite}--\eqref{eqKTR}.

In neuroscience, this aspect is tightly related to more elaborate descriptions of the stochastic processes underlying neural activity, in particular the Hawkes and Wold processes.

\textbf{Relation to the Hawkes and Wold processes}
Our generalization of renewal equations to infinite-times shares similarities with the language of Hawkes process, as they appear in computational neuroscience~\cite{Pernice_2011}, genome analysis~\cite{Patricia_2010} and financial analysis~\cite{Luc_2009} etc. It is likely that infinite-times renewal equations can be also used in these questions. A Hawkes process can be understood as a point process ${\cal N}= \{T_i\}_{i\in\Z}$ where $T_i\in\R$ is the time when the $i$-th renewal happens. For this process, and in the simplest form, the renewal rate at time $t$ is defined as $\sum_{i\in\Z}h(t-T_i)\mathbbm{1}_{T_i<t}$,  which implies that the renewal rate may depend on the infinite renewal times in the past. Among the numerous  results dealing with Hawkes processes, let us mention their distribution~\cite{chevallier2015microscopic} and their long time convergence~\cite{Costa_Hawkes_2020,GRAHAM_Hawkes_2019}. 

Apart from Hawkes processes, Wold processes are also connected to~\eqref{eq:infinite}, where a particle renewal rate depends on a fixed number of renewals in the past (see~\cite{chevallier2015microscopic}). They could possibly correspond to the finite-times renewal equation. However, the possibility of a Wold process depending on infinite renewal times in the past is still largely open. 

\textbf{Content of the paper.} In order to study the problem \eqref{eq:infinite}, we need to  recall, following \cite{perthameST2022}, properties of the $N$-times renewal equation, such as the $L^1$ non-expansion property and  $L^\infty$ bounds. Especially we prove a tightness estimate (Lemma~\ref{lemma:tightness}) uniformly in dimension~$N$, which facilitates later weak convergence results (Theorem~\ref{th:weak_limit_of_finite}).

Based on these results, our first goal is to prove that there is a limit as $N \to \infty$ of the $N$-times renewal equation. On the one hand, in the spirit of the BBGKY hierarchy in particles systems, see \cite{CIP1994}, this limit can be understood as a hierarchy of marginals $n_\infty^{(K)}$ each of them satisfying an equation depending on the next one, see Theorem~\ref{th:uniqueness}. We call them 'hierarchy solution', see Definition~\ref{def:hierarchy_weak_solution}. On the other hand, using the Kolmogorov extension theorem, it can also be understood as a measure $n_\infty$ in infinite dimensions (Theorem~\ref{th:weak_limit_of_finite}), which satisfies in a weak sense the infinite-dimensional equation (Definition \ref{def:MSI}). These two points of view turn out to be equivalent and our main result is well-posedness of solutions  of Equation \eqref{eq:infinite}.

Our second goal is to study the long time behaviour of these solutions to the infinite dimensional problem. For that, we use two different approaches. For the $N$-times renewal equation, we can use the Doeblin method to prove their exponential $L^1$-convergence, see Theorem~\ref{thm:ststN}. But as the dimension goes to infinity, the $L^1$-convergence rate will go to zero. However, if the renewal rate $p_N$ converges fast enough to $p_\infty$,  one can prove some uniformity-in-time convergence as $N\to \infty$ and also 
 some uniformity in $N$ of the long term behaviour, still in $L^1$, see Theorem~\ref{thm:uni-n-time-concrete}. The $L^1$ topology is hardly compatible with working with the infinite dimensional measure $n_\infty$. Therefore we also  use the Monge-Kantorovich distance to work in the weak topology of measures. We establish a long term convergence result for finite systems (Theorem \ref{th:finite_monge_convergence}) as well as infinite systems (Theorem~\ref{th:infinite_MK_convergence}). Again, these results are uniform in $N$ and rely on some, uniform in $N$,  smallness condition on a Lipschitz norm of $p_N$.

\textbf{Outline of the paper.} In Section \ref{Ntimes}, we prove several properties for the finite-times renewal equations. Departing from these results for finite-times renewal equations, we use two approaches to handle the weak solution of the infinite-times renewal equation, where the first approach is in Section \ref{Sec:hierarchy}-\ref{sec:uniformError} involving strong topology is the 'hierarchy solution', and the second approach, in Section~\ref{sec:MeasureSol}-\ref{sec:MK-BIG}, involves weak topology and 'measure solutions' thanks to the Kolmogorov extension theorem and Monge-Kantorovich distance.

\section{Notations} 
\label{sec:notations}
\paragraph{General notations.}
As mentioned earlier, we consider variables which satisfy $s_i\leq s_{i+1}$, and thus we introduce the domains
\begin{equation}\label{def-CN}
\mathcal{C}_N= \{0\leq s_1\leq...\leq s_N\}, \qquad \quad   \mathcal{C}_\infty=\{0\leq s_1...\leq s_N\leq...\}.
\end{equation}

We use the notation $[s]_N$ to denote the vector $(s_1,...,s_N)\in\Rp_N$, $s_i$ is the $i$-th entry in $[s]_N$, $d[s]_N$ to denote the Lebesgue measure on $\Rp_N$. More generally, we set
\[
[s]_{K,N}=(s_K,...,s_N)\in\Rp_{N-K+1} \qquad \text{for} \qquad K\leq N.
\] 
In the same way, $[s]_{\infty}$ corresponds to  an infinite vector. We use $\tau$ as the shift operator, where
\begin{equation}\label{def:shift_operator}
\tau(s_1,s_2,...,s_N)=(0,s_1,s_2,...,s_{N-1}),\qquad \tau(s_1,s_2,...,s_N,...)=(0,s_1,s_2,...,s_N,...)
\end{equation}
For $K,N=1,2,...,\infty$ with $K\leq N$, we use $n_N$ to denote the solution of the $N$-dimensional problem, while $n_N^{(K)}$ denotes the first $K$ marginal of $n_N$
\begin{equation}
\label{Marginales}
n_N^{(K)}  (t,[s]_{K})= \int_{s_{K}\leq s_{K+1}\leq...\leq s_N} n_N(t,s_1,..., s_{N})\,d[s]_{K+1, N}.
\end{equation}
It is obvious that the marginals satisfy, for $K<N$,
\begin{equation}
\label{MarginalesNK}
n_N^{(K)}  (t,[s]_{K})= \int_{s_K}^\infty n_N^{(K+1)} (t,s_1,..., s_{K+1})\,ds_{K+1}.
\end{equation} 
We use $\mu_N$ to denote a measure on $\mathcal{C}_N$, and in particular $\mu_\infty$ is a measure on $\mathcal{C}_\infty$.  We also use $\mu^{(K)}$ to denote the $K$ marginal of a distribution $\mu$.
%--------------------------
\begin{definition} [Consistent sequences] \label{def:consistent}
A sequence of measure $(\mu_N)_{N\geq 1}$ is said to be consistent if its marginals satisfy $\mu_K=\mu_N^{(K)}$ for all $K < N$. In case of a measure $\mu_{\infty}$, it means  
$\mu_\infty^{(K)}=\int_0^\infty \mu_\infty^{(K+1)} ds_{K+1}$ for all $K\geq 1$.
\end{definition}

We frequently use several functions, among them let us mention, for $N=1,2,...,\infty$ the moment function (for tightness) 
\begin{equation} \label{def:weight}
\sg_N([s]_N) = \sum_{i=1}^N \frac{s_i}{2^{i}}.
\end{equation}
In a given a measurable space $\mathcal{X}$,  we use $\mathcal{M}(\mathcal{X})$ to denote the space of signed measures with finite total measure, and we denote the total variation norm (TV-norm) on $\mathcal{M}(\mathcal{X})$ as $\| \cdot \|_{{\mathcal M}^1}$ instead of $\lVert \cdot\rVert_{TV}$. The set of probability measures is denoted by $\mathcal{P}(\mathcal{X})$, which is a subset of $\mathcal{M}(\mathcal{X})$.
We use $C_w([a,b];\mathcal{M}(\mathcal{X}))$ to define the space of continuous functions with values in $\mathcal{M}(\mathcal{X})$ endowed with its weak topology and we use the naming {\em weakly continuous}. We use $C_b^i(\mathcal{X})$ to define the space of bounded $i$th-differentiable functions.

\paragraph{Assumptions and weak solutions.}
We assume the particular form of the renewal rate,
\begin{equation} \label{as:RR1}
p_N= \sum_{i=1}^N \varphi_i(s_1,..., s_i), \qquad  \varphi_i >0, \qquad   \sum_{i=1}^\infty \| \varphi_i \|_{\infty}\leq a_+ < \infty,
\end{equation}
where $\lVert \cdot\rVert_{\infty}$ is the $L^\infty$ norm. We also define
\begin{equation}\label{def-eps}
    \eps_{K,N}:=\sum_{i=K+1}^{N}\|\varphi_i\|_{\infty},\qquad \quad \eps_{K}:=\eps_{K,\infty}.
\end{equation}
We also need a lower control and we assume
\begin{equation} \label{as:RR2}
0< a_- \leq p_N([s]_N)\leq a_{+}.
\end{equation}

\begin{definition}[Weak solution of the N-times renewal equation]
\label{def:MSI_finite}
We say that  $n_{N}\in C\big([0,+\infty);L^1(\Rp_{N})\big)$ is a weak solution of the N-times renewal equation~\eqref{eqKTR} if for all $T>0$ and all test functions $\psi\in C_b^1([0,T]\times \Rp_N)$, with $\tau[s]$ as the shift operator  defined in \eqref{def:shift_operator}, 
\begin{equation}\label{eq:measure-solution-finite}
\begin{split}
\int_0^{T} \! \! \int_{\mathcal{C}_N}  n_{N}(t,[s]_{N})&\Big[- \p_t\psi(t,[s]_{N})- \sum_{i=1}^{N}\p_{s_i}\psi(t,[s]_{N})+\big(\psi(t,[s]_{N})-\psi(t,\tau [s]_{N})\big)p_{N}([s]_{N})\Big] d[s]_{N} dt
\\
=& -\int_{\Rp_N} \psi(T,[s]_{N})n_{N}(T,[s]_{N})d[s]_{N}+\int_{\Rp_N} \psi(0,[s]_{N})n_{N}(0,[s]_{N}) d[s]_{N} .
\end{split}
\end{equation} 
\end{definition}

For the infinite case, we give definitions in the corresponding sections.
\\

We use $\mathcal{N}_N([s]_{N-1})$ to denote the boundary data of the $N$-times problem
\begin{equation}\label{eq:boundary_condition_1}
\mathcal{N}_N(t,[s]_{N-1})=\int_{s_{N-1}}^{+\infty} n_{N}(t,[s]_N)p_{N}([s]_N)ds_N, \qquad \mathcal{N}_{\infty}([s]_{\infty})=p_{\infty}([s]_{\infty})n_{\infty}([s]_{\infty}) .
\end{equation} We note that $\mathcal{N}_{\infty}$ does not involve an integration since there is no history variable to forget after a renewal. We use $n_{N}^{*}$ to denote the stationary solution of the $N$-dimensional problem, while $n_{N}^{*,(K)}$ refers to its  first $K$ marginal. 

%------------------------------------------------------
\section{The  $N$-times renewal equation}   \label{Ntimes}
%------------------------------------------------------

Since we see the infinite-times renewal equation as the limit of the $N$-times equation, our first purpose is to prove uniform estimates in $N$ for the solutions of Equation~\eqref{eqKTR}. We complete Equation \eqref{eqKTR} with an initial data which is a probability density
\begin{equation} \label{as:ID}
n_N(0,[s]_N)\geq 0, \quad \int_{\Rp_N} n_N(0,[s]_N) \,d[s]_{N}=1, \quad n_N(0,[s]_N) \text{ is supported in } \Rp_N.
\end{equation}

The approaches to the well-posedness of the $2$-times renewal equation apply here, see~\cite{perthameST2022}. For any $1\leq N<+\infty$, any $p_N$ satisfying Assumption \eqref{as:RR2} and any initial data $n_N(0)$ supported in $\mathcal{C}_N$, Equation \eqref{eqKTR} has a unique weak solution $n_N(t)$ in the sense of Definition \ref{def:MSI_finite}. In particular, solutions belong to $C\big([0,+\infty); L^1(\Rp_N)\big)$ and the properties \eqref{as:ID} are propagated, for all times $t>0$, we still  have 
\begin{equation}\label{eq:conservation}
n_N(t,[s]_N) \geq 0, \qquad   \int_{\Rp_N} n_N(t,[s]_N) \,d[s]_{N} =1,  \quad n_N(t,[s]_N) \text{ is supported in } \Rp_N.
\end{equation}

\subsection{Marginales in Equation~\eqref{eqKTR}}

We are going to prove uniform in $N$ estimates for the marginales of $n_N$  based on the equation they satisfy, namely
\begin{equation}
\label{eqNK}
\left\{
\begin{matrix*}[l]
\partial_t n_N^{(K)}+ \displaystyle \sum_{i=1}^K \partial_{s_i} n_N^{(K)} +p_K([s]_K )n_N^{(K)} + E_N^{(K)}(t,[s]_K) =0, 
\\[5pt]
n_N^{(K)}(t,s_1=0,[s]_{2,K})= \int_{0}^\infty \big[ p_K n_N^{(K)}+ E_N^{(K)} \big] (t,[s]_{2,K}, u)  du,
\end{matrix*}
\right.
\end{equation}
with the coupling term
\[
E_N^{(K)}(t,[s]_K) =  \sum_{i=K+1}^N  \int_{s_{K+1}=0}^\infty \! ... \! \int_{s_{N}=0}^\infty  \varphi_i([s]_i)  n_N (t, [s]_N) \,d[s]_{K+1, N} .
\]
We prefer to write the coupling terms above in terms of the marginales
\begin{equation}
\label{couplingNK}
E_N^{(K)}(t,[s]_K) = \sum_{i=K+1}^N \int_{s_{K+1}=0}^\infty \! ... \! \int_{s_{i}=0}^\infty  \varphi_i([s]_i)  n_N^{(i)}  (t, [s]_i) \,d[s]_{K+1, i}   , 
\end{equation}
and we have a pointwise bound
\begin{equation}
    0\leq E_N^{(K)}(t,[s]_K) \leq \left( \sum_{i=K+1}^N  \| \varphi_i  \|_\infty \right) n_N^{(K)}(t,[s]_K),
\end{equation}
and we conclude the $L^1$ estimate
\begin{equation} \label{Error_estimate}
\| E_N^{(K)}(t,\cdot) \|_{L^1(\Rp_K)} \leq \eps_{K,N}:= \sum_{i=K+1}^N  \| \varphi_i \|_\infty .
\end{equation}

%----------------------------------------------------
\subsection{Uniform $L^\infty$ estimates}
\label{sec:linfnty}

In order to prove the $L^\infty$ estimates, which are uniform in $N$, we introduce a subsolution. With  definition \eqref{def-eps} for $\eps_{K}$, we consider the stationary solution $\overline n_K ([s_K]) >0$ of the $K$-times renewal equation
\[ \begin{cases}
 \displaystyle \sum_{i=1}^K \partial_{s_i} \overline n_K +\big(\eps_{K} + p_K([s]_K) \big) \overline  n_K=0,
\\[5pt]
\overline n_K(s_1=0,[s]_{2,K})= \int_{u=0}^\infty \big[ p_K([s]_{2,K}, u )+  \displaystyle  \eps_{K} \big]\;  \overline  n_{K}([s]_{2,K}, u)  du.
\end{cases} 
\]

\begin{lemma}[Locally uniform $L^\infty$ bounds]  \label{pr:iniformbdd} 
We assume  \eqref{as:RR1}, \eqref{as:ID} and  that for some $K \leq N$
\begin{equation} \label{as:IDbdd}
n^{(K)}_N(0,[s]_K)\leq C_K \overline  n_K([s]_K).
\end{equation}
Then, for all $t>0$,  we also have 
\begin{equation} \label{as:bdd}
 n^{(K)}_N(t,[s]_K) \leq C_K   e^{\eps_{K} t} \overline n_K ([s]_K) .
\end{equation}
\end{lemma}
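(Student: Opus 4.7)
The plan is to prove \eqref{as:bdd} by a comparison argument, showing that $w(t,[s]_K) := C_K e^{\eps_K t}\,\overline n_K([s]_K)$ is a supersolution of the marginal system~\eqref{eqNK}. First I would verify from the definition of $\overline n_K$ that $w$ satisfies the clean bulk equation
\begin{equation*}
  \p_t w + \sum_{i=1}^K \p_{s_i} w + p_K([s]_K)\, w = 0,
\end{equation*}
the $\eps_K\,\overline n_K$ absorbed into $\overline n_K$ exactly compensating the time growth $e^{\eps_K t}$, together with the boundary identity
\begin{equation*}
  w(t,0,[s]_{2,K}) = \int_0^{\infty} \bigl[p_K([s]_{2,K},u) + \eps_K\bigr]\, w(t,[s]_{2,K},u)\,du.
\end{equation*}
The structural point is that the extra $\eps_K$ in this boundary flux is tailor-made to dominate the coupling term $E_N^{(K)}$, which obeys $E_N^{(K)} \leq \eps_{K,N}\, n_N^{(K)} \leq \eps_K\, n_N^{(K)}$.

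Next I would set $v := n_N^{(K)} - w$. Subtracting the bulk equations yields
\begin{equation*}
  \p_t v + \sum_{i=1}^K \p_{s_i} v + p_K v = -E_N^{(K)} \leq 0,\qquad v(0,\cdot) \leq 0,
\end{equation*}
while at the boundary, subtracting the two identities and using the bound $E_N^{(K)} - \eps_K w \leq \eps_K v$ gives
\begin{equation*}
  v(t,0,[s]_{2,K}) \leq \int_0^{\infty} \bigl[p_K + \eps_K\bigr]\, v(t,[s]_{2,K},u)\,du.
\end{equation*}

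The final step is an $L^1$-estimate for $v_+ := \max(v,0)$ on the cone $\mathcal{C}_K$. Since the transport direction $(1,\dots,1)$ is tangent to the diagonal faces $\{s_i = s_{i+1}\}$, only the face $\{s_1=0\}$ contributes to the boundary flux. Renormalising and integrating over $\mathcal{C}_K$ yields
\begin{equation*}
  \frac{d}{dt} \|v_+(t)\|_{L^1(\mathcal{C}_K)} \leq \int_{\mathcal{C}_{K-1}} v_+(t,0,[s]_{2,K})\,d[s]_{2,K} - \int_{\mathcal{C}_K} p_K v_+\,d[s]_K.
\end{equation*}
Inserting the boundary estimate (valid for $v_+$ since $p_K+\eps_K \geq 0$) and recognising the resulting incoming flux as $\int_{\mathcal{C}_K}(p_K+\eps_K)\,v_+$ after the natural relabelling $(s_2,\dots,s_K,u)\mapsto (s_1,\dots,s_K)$, the $p_K$-contributions cancel and only
\begin{equation*}
  \frac{d}{dt} \|v_+(t)\|_{L^1(\mathcal{C}_K)} \leq \eps_K\, \|v_+(t)\|_{L^1(\mathcal{C}_K)}
\end{equation*}
remains. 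Since $v_+(0,\cdot) \equiv 0$ by~\eqref{as:IDbdd}, Gronwall forces $v_+\equiv 0$, which is exactly~\eqref{as:bdd}.

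The main obstacle will be to justify the renormalised $L^1$ chain rule on $v_+$ for a weak solution of a first-order transport equation with $L^\infty$ coefficients, and to handle the boundary flux on the cone $\mathcal{C}_K$ rigorously; however this is exactly the $L^1$-contraction machinery already invoked in the well-posedness discussion of Section~\ref{Ntimes} following~\cite{perthameST2022}, so I would cite it rather than redo it. Once this renormalisation is granted, the cancellation that powers the comparison is precisely the one pre-installed into the boundary condition of $\overline n_K$, and the conclusion is immediate.
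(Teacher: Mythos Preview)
Your proposal is correct and follows essentially the same approach as the paper: both identify $C_K e^{\eps_K t}\,\overline n_K$ as a solution of the $K$-times equation with the $\eps_K$-augmented boundary flux~\eqref{eq:equation_with_extra_source}, observe that $n_N^{(K)}$ is a subsolution of that same problem, and conclude by comparison. The paper states this comparison in one line, whereas you unpack it explicitly via the $L^1$-Gronwall estimate on $v_+$; this is exactly the mechanism behind the comparison principle being invoked, so the two proofs coincide.
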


\begin{proof}
Departing from \eqref{MarginalesNK}, the $K$  marginal also satisfies
\[ \begin{cases}
\partial_t n_N^{(K)}+ \displaystyle \sum_{i=1}^K \partial_{s_i} n_N^{(K)} +p_K([s]_K )n_N^{(K)} \leq 0, 
\\[5pt]
n_N^{(K)}(t,s_1=0,[s]_{2,K}) \leq \int_{0}^\infty \big[ p_K([s]_{2,K}, u )+  \displaystyle  \sum_{i=K+1}^N \| \varphi_i \|_\infty \big] n_N^{(K)}(t,[s]_{2,K}, u)  du .
\end{cases} 
\]
This shows that $n_N^{(K)}$ is a subsolution of the following equation\begin{equation}\label{eq:equation_with_extra_source}
    \begin{cases}
 \displaystyle \p_tn_K+\sum_{i=1}^K \partial_{s_i}  n_K + p_K([s]_K)   n_K=0,
\\[5pt]
 n_K(s_1=0,[s]_{2,K})= \int_{u=0}^\infty \big[ p_K([s]_{2,K}, u )+  \displaystyle  \eps_{K} \big]\;    n_{K}([s]_{2,K}, u)  du.
%\\[5pt]
%  n_K(s_1, s_2,..., s_i=0,..s_K)= 0, \quad i=2,...,K,
\end{cases} 
\end{equation}
and thus is less than $C_K e^{\eps_K t} \overline n_K$ which is a solution.
\end{proof}

%----------------------------------------------------
\subsection{Uniform tightness estimate}

 A first step to prove that this family of probability measures $\{n_{N}(t)\}_{N}$ weakly converges, we establish tightness. For that purpose, the use of the weight function $\sigma_N$ defined by \eqref{def:weight} is crucial in achieving a balance between dimension and  control of the decay of solutions at infinity. It helps to prioritize the first several variables while it also takes account properly  all variables so  that the derived estimates are uniform in $N$.

\begin{lemma}\label{lemma:tightness} With Assumptions \eqref{as:RR2} and \eqref{as:ID}, for all $t\geq 0$, we have,
\[
\int_{\Rp_N}  \sg_N([s]_N) n_N(t,[s]_N)  d[s]_N \leq  e^{-\frac{a_- t}{2}}  \int_{\Rp_N} \sg_N([s]_N) n_N(0,[s]_N)  d[s]_N +(1- e^{-\frac{a_- t}{2}})\frac{2}{a_-} .
\]
\end{lemma}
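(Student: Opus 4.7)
The natural strategy is to test the equation \eqref{eqKTR} against the moment function $\sigma_N$, derive an ODE inequality for $M_N(t):=\int_{\mathcal{C}_N}\sigma_N([s]_N)n_N(t,[s]_N)\,d[s]_N$, and close with Gronwall. Since $\sigma_N$ is unbounded, I cannot insert it directly into Definition~\ref{def:MSI_finite}, so I would first approximate by a cut-off $\sigma_N^R = \sigma_N\wedge R$ smoothed into $C_b^1$, apply \eqref{eq:measure-solution-finite} with time-independent test function $\psi=\sigma_N^R$, then send $R\to\infty$ using the pointwise monotone limit and the $L^1$ conservation \eqref{eq:conservation}. Alternatively, one can work formally at first (multiplying the strong form of \eqref{eqKTR} by $\sigma_N$, integrating on $\mathcal{C}_N$, and handling the boundary piece $s_1=0$ via the renewal condition), since the computation is identical.

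\textbf{Key computations.} Two elementary identities do essentially all the work. First, the transport term contributes
\[
\sum_{i=1}^N \partial_{s_i}\sigma_N([s]_N)=\sum_{i=1}^N \frac{1}{2^i}=1-\frac{1}{2^N}\leq 1,
\]
which combined with the mass conservation $\int n_N(t)\,d[s]_N=1$ produces a constant source $\leq 1$. Second, a direct reindexing of $\sigma_N\circ\tau$, using $\tau[s]_N=(0,s_1,\ldots,s_{N-1})$, gives
\[
\sigma_N(\tau[s]_N)=\sum_{i=2}^N \frac{s_{i-1}}{2^{i}}=\frac{1}{2}\sum_{j=1}^{N-1}\frac{s_j}{2^j}=\frac{\sigma_N([s]_N)}{2}-\frac{s_N}{2^{N+1}},
\]
so that
\[
\sigma_N([s]_N)-\sigma_N(\tau[s]_N)=\frac{\sigma_N([s]_N)}{2}+\frac{s_N}{2^{N+1}}\geq \frac{\sigma_N([s]_N)}{2}.
\]
Combined with the assumption $p_N\geq a_-$ of \eqref{as:RR2}, this makes the renewal contribution in the weak formulation a genuine absorption term on $\sigma_N n_N$.

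\textbf{Gronwall conclusion.} Putting everything together, the weak formulation with $\psi=\sigma_N$ gives
\[
\frac{d}{dt}M_N(t)\leq 1-\frac{a_-}{2}M_N(t),
\]
whose explicit solution (Duhamel's formula) is exactly
\[
M_N(t)\leq e^{-a_-t/2}M_N(0)+\frac{2}{a_-}\bigl(1-e^{-a_-t/2}\bigr),
\]
which is the claimed bound.

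\textbf{Main obstacle.} The only real subtlety is the rigorous justification of using the unbounded weight $\sigma_N$ in the weak formulation. Provided one carries out the truncation/regularization argument carefully, controlling the boundary contribution at $s_1=0$ via the renewal boundary condition in \eqref{eqKTR}, the rest is the two line algebra above. The crucial structural point is the inequality $\sigma_N-\sigma_N\circ\tau\geq \sigma_N/2$, which is the reason for choosing geometric weights $1/2^i$ and which is what makes the bound uniform in the dimension $N$.
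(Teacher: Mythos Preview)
Your proposal is correct and is essentially the paper's proof. The paper also tests the equation against $\sigma_N$ (justified, like you, by a density/truncation argument), uses $\sum_i 2^{-i}\le 1$ together with mass conservation for the source term, exploits the same key geometric identity $\sigma_N(\tau[s]_N)\le \tfrac12\sigma_N([s]_N)$ (written there as $\sigma_N(0,[s]_{2,N})\le\tfrac12\sigma_N([s]_{2,N},u)$ via the boundary condition), and closes with the same differential inequality $M_N'\le 1-\tfrac{a_-}{2}M_N$ and Gronwall. The only cosmetic difference is that the paper presents the computation via the strong form plus the boundary condition at $s_1=0$, whereas you package the boundary contribution through the $\psi\circ\tau$ term of the weak formulation~\eqref{eq:measure-solution-finite}; the underlying algebra is identical.
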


\begin{proof}
Based on Definition \ref{def:MSI_finite} of weak solutions and using the density argument, we can use $\sigma_N$ as a test function and write, 
\begin{align*}
\frac{d}{dt}& \int_{\Rp_N}  \sg_N([s]_N) n_N(t,[s]_N) d[s]_N +  \int_{\Rp_N}  \sg_N ([s]_N) p_N([s]_N)n_N(t,[s]_N) d[s]_N 
\\
=& \sum_{i=1}^{N} \int_{\Rp_N} n_N(t,[s]_N)  \frac{\partial \sg_N}{\partial s_i}([s]_N) d[s]_N  + \int_{\Rp_{N-1}} \sg_N(0, [s]_{2,N}) n_N (t,0, [s]_{2,N}) d[s]_{2,N}.
\end{align*}
Since the boundary condition gives
\begin{equation*}
n_N(t,0,[s]_{2,N})=\int_0^{\infty}p_N([s]_{2,N},u)n_N(t,[s]_{2,N},u)du,
\end{equation*}
we have
\begin{align*}
\frac{d}{dt} \int_{\Rp_N}&  \sg_N([s]_N) n_N(t,[s]_N) d[s]_N +  \int_{\Rp_N}  \sg_N ([s]_N) p_N([s]_N)n_N(t,[s]_N) d[s]_N 
\\
=& \sum_{i=1}^{N}2^{-i} \int_{\Rp_{N}} n_N(t,[s]_N) d[s]_N  + \int_{\Rp_{N-1}} \sg_N(0, [s]_{2,N}) \int_0^\infty p_N([s]_{2,N},u)n_N (t, [s]_{2,N}, u ) du d[s]_{2,N}.
\end{align*}
Using the  normalization condition for $n_N$, we further write
\begin{equation*}
\begin{split}
\frac{d}{dt} \int_{\Rp_N} & \sg_N([s]_N) n_N(t,[s]_N) d[s]_N +  \int_{\Rp_N}  \sg_N ([s]_N) p_N([s]_N) n_N(t,[s]_N) d[s]_N\\
\leq& 1+ \frac {1}{2}   \int_{\Rp_N} \sg_N([s]_{2,N},u)  p_N([s]_{2,N},u) n_N (t, [s]_{2,N}, u )  d[s]_{2,N} du.
\end{split}
\end{equation*}
Then we replace $u$ with $s_N$ to write the equation as
\begin{equation*}
\begin{split}
\frac{d}{dt} \int_{\Rp_N} & \sg_N([s]_N) n_N(t,[s]_N) d[s]_N +  \int_{\Rp_N}  \sg_N ([s]_N) p_N([s]_N)n_N(t,[s]_N) d[s]_N\\
\leq& 1+\frac{1}{2}\int_{\Rp_N}\sigma_N([s]_N)p_N([s]_N)n_N(t,[s]_N)d[s]_N.
\end{split}
\end{equation*}
Consequently 
\[
\frac{d}{dt} \int_{\Rp_N}  \sg_N([s]_N) n_N(t,[s]_N) d[s]_N +  \frac {1}{2}   \int_{\Rp_N}  \sg_N ([s]_N) p_N([s]_N) n_N(t,[s]_N) d[s]_N  \leq 1.
\]
With Assumption \eqref{as:RR2}, we have,
\begin{equation*}
\frac{d}{dt} \int_{\Rp_N}  \sg_N([s]_N) n_N(t,[s]_N) d[s]_N\leq -\frac{a_-}{2}\int_{\Rp_N} \sg_N([s]_N) n_N(t,[s]_N) d[s]_N+1.
\end{equation*}
Use the Gronwall lemma and we conclude.
\end{proof}
%
%---------------------------------------
\subsection{Steady states and long term behavior}
\label{sec:StStN}
%---------------------------------------

Following recent literature \cite{canizo2019asymptotic,perthameST2022}, we use Doeblin's method (see Appendix \ref{sec:Doeblin_Theorem}) to build a steady state for the $N$-times renewal equation. For that purpose, given $t^* > 0$ and with the notations of Assumption~\eqref{as:RR2},  we define the numbers
\begin{equation}\label{LTB:numbers}
\left\{
\begin{split}
& \alpha_N(t^*):=a_-^{N} \int_{0\leq s_N\leq t^*}\frac{1}{(N-1)!}s_N^{N-1} e^{-s_Na_+}ds_N,
\\[5pt]
& \lambda_N (t^*) :=-\frac{\ln(1-\alpha_N(t^*))}{t^*},\qquad \quad c_N(t^*):=\frac{1}{1-\alpha_N(t^*)}.
\end{split}
\right.
\end{equation}
By a standard calculation, we can see that $0<\alpha_N(t^*)<1$ for all $t^*>0$. Consequently we have $\lambda_N(t^*)>0$ and $c_N(t^*)>1$. Our main result is  
\begin{theorem} [Steady state, convergence and tightness]\label{thm:ststN}
 Fix the dimension $N$ and assume \eqref{as:RR2}, \eqref{as:ID}. Then, the solution $n_N(t,[s]_N)$ of Equation~\eqref{eqKTR} satisfies
\begin{equation} \label{eq:lowerbd}
n_N(t,[s]_N)\geq a_{-}^{N}e^{-s_{N}a_+},\qquad \textup{for} \quad s_1\leq ...\leq s_{N}\leq t .
\end{equation}
Also, $n_N(t,[s]_N)$ converges exponentially in the $L^1$-norm to the unique stationary state $n_N^*([s]_N)$ and, for all $t^*>0$ we have
\begin{equation}\label{eq:-converge-N-long-time}
\lVert n_N(t) -n_N^*\rVert_{L^1(\Rp_N)}\leq c_N(t^*)e^{-\lambda_N(t^*)\, t}\lVert n_N(0)-n_N^*\rVert_{L^1(\Rp_N)},\quad \forall \,t\geq 0 .
\end{equation}

Furthermore, the tightness estimate holds
\begin{equation}\label{uniform-moment-steady}
\int_{\Rp_N}\sigma_N([s]_N)n_N^{*}(d[s]_N)\leq \frac{2}{a_{-}}.
\end{equation}
\end{theorem}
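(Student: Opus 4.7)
The plan is to establish the three assertions in order: the pointwise lower bound, exponential $L^1$-convergence via Doeblin, and finally the tightness estimate for the steady state.

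\textbf{Step 1: Lower bound.} I fix $t > 0$ and $(s_1, \ldots, s_N)$ with $s_N \leq t$, and iterate a Duhamel-plus-boundary argument $N$ times. Integrating the transport equation along the characteristic from $(t-s_1, 0, s_2-s_1, \ldots, s_N-s_1)$ to $(t, s_1, \ldots, s_N)$ and using $p_N \leq a_+$ gives
\[
n_N(t, [s]_N) \geq e^{-s_1 a_+}\, n_N(t-s_1, 0, s_2-s_1, \ldots, s_N-s_1).
\]
The boundary condition in \eqref{eqKTR}, restricted to the support $\mathcal{C}_N$, then yields
\[
n_N(t-s_1, 0, s_2-s_1, \ldots, s_N-s_1) \geq a_- \int_{s_N-s_1}^{\infty} n_N(t-s_1, s_2-s_1, \ldots, s_N-s_1, u_1)\, du_1.
\]
Each pair of operations (Duhamel back to the next boundary, then boundary condition) consumes one of the original coordinates $s_i$ and introduces a new integration variable $u_i$. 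After $N$ iterations, a straightforward change of variables shows that the iterated integration domain reconstructs exactly $\mathcal{C}_N$, giving
\[
n_N(t, [s]_N) \geq a_-^N e^{-s_N a_+} \int_{\mathcal{C}_N} n_N(t-s_N, [v]_N)\, d[v]_N = a_-^N e^{-s_N a_+},
\]
where the last equality uses mass conservation \eqref{eq:conservation}.

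\textbf{Step 2: Exponential $L^1$-convergence via Doeblin.} Define the minorizing function $g_{t^*}([s]_N) := a_-^N e^{-s_N a_+} \mathbbm{1}_{\{s_N \leq t^*\}}$ supported on $\mathcal{C}_N$. By Step 1, every probability solution satisfies $n_N(t^*, \cdot) \geq g_{t^*}$ pointwise. Computing its mass by integrating over the simplex $\{0 \leq s_1 \leq \cdots \leq s_{N-1} \leq s_N\}$ at fixed $s_N$ gives
\[
\int_{\mathcal{C}_N} g_{t^*}([s]_N)\, d[s]_N = a_-^N \int_0^{t^*} \frac{s_N^{N-1}}{(N-1)!} e^{-s_N a_+}\, ds_N = \alpha_N(t^*),
\]
exactly as in \eqref{LTB:numbers}. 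The $N$-times renewal semigroup is a Markov ($L^1$-contractive, mass-preserving) semigroup on probability densities by \cite{perthameST2022}, so the Doeblin condition together with Doeblin's theorem (Appendix~\ref{sec:Doeblin_Theorem}) yields the existence and uniqueness of $n_N^*$ and the decay \eqref{eq:-converge-N-long-time} with $c_N(t^*), \lambda_N(t^*)$ as stated.

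\textbf{Step 3: Tightness of $n_N^*$.} I pick a specific initial datum $n_N(0)$ satisfying \eqref{as:ID} with $\int \sigma_N([s]_N) n_N(0)\, d[s]_N < \infty$ (e.g.\ any compactly supported probability density in $\mathcal{C}_N$). Lemma~\ref{lemma:tightness} gives
\[
\int_{\Rp_N} \sigma_N([s]_N)\, n_N(t, [s]_N)\, d[s]_N \leq e^{-a_- t / 2} \int_{\Rp_N} \sigma_N n_N(0) + \bigl(1 - e^{-a_- t/2}\bigr) \frac{2}{a_-}.
\]
By Step 2, $n_N(t) \to n_N^*$ in $L^1(\Rp_N)$, so along a subsequence $t_k \to \infty$ we have pointwise a.e.\ convergence; Fatou's lemma combined with letting $t_k \to \infty$ on the right-hand side yields $\int \sigma_N n_N^*\, d[s]_N \leq 2/a_-$.

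\textbf{Main obstacle.} The delicate point is Step 1, where I must carefully track how the ordering constraints and shifts in variables propagate through $N$ successive applications of characteristic transport and boundary minorization, and verify that the resulting product of integration domains is exactly $\mathcal{C}_N$. Once this combinatorial identity is in hand, mass conservation delivers the clean lower bound, and Steps 2 and 3 are then routine applications of Doeblin's theorem and Lemma~\ref{lemma:tightness}.
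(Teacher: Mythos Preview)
Your proof is correct and follows essentially the same approach as the paper: the iterated characteristics-plus-boundary argument for the lower bound, Doeblin's theorem for exponential convergence, and Lemma~\ref{lemma:tightness} for the moment bound on $n_N^*$. The only minor variation is in Step~3: the paper applies Lemma~\ref{lemma:tightness} directly with $n_N^*$ as initial data (so $n_N(t)\equiv n_N^*$) and lets $t\to\infty$, whereas you start from an auxiliary initial datum with finite $\sigma_N$-moment and pass to the limit via Fatou along an a.e.-convergent subsequence---your version is in fact slightly more careful, since the paper's one-line argument tacitly assumes $\int\sigma_N n_N^*<\infty$ to begin with.
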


At this stage, the steady state $n_N^*$ is a $L^1$ function and it satisfies the equation
\begin{equation} \label{eq:stst}
\left\{
\begin{split}
&\sum_{i=1}^{N}\p_{s_i}n_N^{*}+p_N n_N^{*}=0, \\
&n_N^{*}(0,[s]_{N-1})=\int p_N([s]_N) n_N^{*}([s]_{N-1},s_N) ds_N,  \quad  n_N^{*} \; \text{is supported in} \;  {\mathcal C}_N.
%\\&n_N^{*}(s_1,...,s_i=0,...,s_N)=0, \quad i=2...,N,  \qquad \text{and} \quad  n_N^{*} \; \text{is supported in} \;  {\mathcal C}_N.
\end{split}
\right.
\end{equation}
It is possible to prove better regularity but, being interested in the limit $N \to \infty$, we do not go in this direction.

\begin{remark}
Although Theorem \ref{thm:ststN} gives the long time behavior of each $N$-times equation, the convergence rate $\lambda_N$ in~\eqref{eq:-converge-N-long-time} is \textit{not} uniform in $N$, which brings essential difficulties to study the long time behavior for the infinite-times problem. This issue is studied in Sections~\ref{sec:uniformError} and~\ref{sec:MK-BIG}.
\end{remark}

\begin{proof}
{\em First step. The estimate \eqref{eq:lowerbd}.}
We use  the definition \eqref{eq:boundary_condition_1} of the boundary renewal flux $\mathcal{N}_N(t,[s]_{N-1})$ defined by the boundary condition. By the method of characteristics for Equation~\eqref{eqKTR}, we can write the expression of $n_N(t,[s]_N)$ as
\begin{equation*}
%\label{eq:boundary_condition_2}
n_N(t,[s]_N)=e^{-\int_0^{s_1}p_N(u,[s-s_1+u]_{2,N})du}\mathcal{N}_N(t-s_1,[s-s_1]_{2,N}) \qquad \text{for}\quad  t\geq s_1,   
\end{equation*}
and, changing variables in definition \eqref{eq:boundary_condition_1} of the boundary condition, we have
\begin{equation*}
\mathcal{N}_N(t-s_1,[s-s_1]_{2,N})=\int_{s_N\leq s_1'}n_N(t-s_1,[s-s_1]_{2,N},s_1'-s_1)p_N([s-s_1]_{2,N},s_1'-s_1)ds_1'.
\end{equation*}
which gives, on $\mathcal C_N$ and  for $t\geq s_1$, 
\begin{equation*}
%\label{eq:lower_bound_pre}
\begin{split}
n_N(t,[s]_N)=&e^{-\int_0^{s_1}p_N(u,[s-s_1+u]_{2,N})du}\\
&\quad \int_{s_N\leq s_1'}n_N(t-s_1,[s-s_1]_{2,N},s_1'-s_1)p_N([s-s_1]_{2,N},s_1'-s_1)ds_1'
\end{split}
\end{equation*}
We can now use Assumption \eqref{as:RR2} to derive  from the above equation the lower bound
\begin{equation}\label{eq:lower_bound_4}
n_N(t,[s]_N)\geq a_-e^{-s_1a_+}\int_{s_N\leq s_1'}n_N(t-s_1,[s-s_1]_{2,N},s_1'-s_1)ds_1' , \qquad  t\geq s_1.
\end{equation}
This procedure is essential toward estimate \eqref{eq:lowerbd}, which states a lower bound for $n_N(t)$ independently of $n_N(t')$ for $t'<t$. In the following iterations, at each step we integrate one more variable until we find the total integral on $n_N$ which is known to be equal to $1$.
\\[3pt]
{\em Second step. Iterations.} For the second iteration, we use Equation \eqref{eq:lower_bound_4} to control from below $n_N(t-s_1,[s-s_1]_{2,N},s_1'-s_1)$. This means we replace $t$ by $t-s_1$, $[s]_N$ by $([s-s_1]_{2,N},s_1'-s_1)$ and, since the notation $s'_1$ is used, we call $s'_2$  the integration variable in the right hand side of \eqref{eq:lower_bound_4}. This gives, for~$t-s_1\geq s_2-s_1$
\begin{equation*}
n_N(t-s_1,[s-s_1]_{2,N},s_1'-s_1)\geq a_-e^{-(s_2-s_1)a_+}\int_{s_N\leq s_1'\leq s_2'}n_N(t-s_2,[s-s_2]_{3,N},[s'-s_2]_2)d[s']_2.
\end{equation*}
Combining this equation with \eqref{eq:lower_bound_4}, we find
\begin{equation}
n_N(t,[s]_N)\geq a_-^2e^{-s_2a_+}\int_{s_N\leq s_1'\leq s_2'}n_N(t-s_2,[s-s_2]_{3,N},[s'-s_2]_2)d[s']_2, \qquad \text{for} \quad t\geq s_2.
\end{equation}
As one can see, we have now integrated in the last two variable of $n_N$.  

Repeat this iteration and we will eventually have,
\begin{equation*}
n_N(t,[s]_N)\geq a_-^Ne^{-s_Na_+}\int_{s_N\leq s_1'\leq ...\leq s_N'}n_N(t-s_N,[s'-s_N]_N)d[s']_N \qquad \text{for} \quad t\geq s_N, 
\end{equation*}
where for the previous data $n_N(t-s_N)$ we integrate over the entire admissible set $\mathcal{C}_N$. By the probability normalization~\eqref{eq:conservation}, we infer
\begin{equation*}
\int_{s_N\leq s_1'\leq ...\leq s_N'}n_N(t-s_N,[s'-s_N]_N)d[s']_N=1 .
\end{equation*}
Consequently we have obtained the desired lower bound \eqref{eq:lowerbd}.
\\[3pt]
%-------------------------------------------
{\em Third step. Using the Doeblin method.}
Given $t^*> 0$ and an initial data $n_N(0)$ satisfying~\eqref{as:ID}, we use~\eqref{eq:lowerbd} to conclude that for any time $t^*$
\begin{equation}\label{eq:doeblin_lower_bound}
n_N(t^*,[s]_N)\geq a_-^Ne^{-s_Na_+}\mathbbm{1}_{s_N\leq t^*}\mathbbm{1}_{\mathcal{C}_N}.
\end{equation}
We may now apply the Doeblin method (see Appendix~\ref{sec:Doeblin_Theorem}). 

Firstly, we need to show that the total mass of $n_N(t)$ is conserved by the evolution which is the statement~\eqref{eq:conservation} (see \cite{perthameST2022} for a proof). Secondly, we need to show that there exist a positive constant $\alpha$ and a probability measure $\nu$ regardless of initial data $n_N(0)$ normalized as a probability, such that
\begin{equation*}
n_N(t^*,[s]_N)\geq \alpha \nu .
\end{equation*}
 To write the lower bound \eqref{eq:doeblin_lower_bound} as the form of $\alpha \nu$, we first calculate the total measure of this lower bound,
\begin{equation}\label{eq:total_measure}
\int_{\mathcal{C}_N}a_-^Ne^{-s_Na_+}\mathbbm{1}_{s_N\leq t^*}d[s]_N=a_-^{N} \int_{0\leq s_N\leq t^*}\frac{1}{(N-1)!}s_N^{N-1} e^{-s_Na_+}ds_N=:\alpha_N(t^*) .
\end{equation}
Since the total measure of the lower bound is $\alpha_N(t^*)$, we can write
\begin{equation}\label{control_alpha}
n_N(t^*,[s]_N)\geq \alpha_N(t^*)\frac{a_-^Ne^{-s_Na_+}\mathbbm{1}_{s_N\leq t^*}\mathbbm{1}_{\mathcal{C}_N}}{\alpha_N(t^*)},
\end{equation}
where $\frac{1}{\alpha_N(t^*)}$ is a normalizing factor and $\frac{a_-^N\exp{(-s_Na_+)}\mathbbm{1}_{s_N\leq t^*}\mathbbm{1}_{\mathcal{C}_N}}{\alpha_N(t^*)}$ is a probability density by Equation~\eqref{eq:total_measure}. Now we may apply the Doeblin method. We conclude that there is a steady state $n_N^*$ and that  for any initial data $n_N(0)$, the corresponding solution satisfies, with $\lambda_N(t^*)$ introduced in \eqref{LTB:numbers}
\begin{equation*}
\lVert n_N(t) -n_N^*\rVert_{L^1(\Rp_N)}\leq \frac{1}{1-\alpha_N(t^*)}e^{-\lambda_N(t^*) t}\lVert n_N(0)-n_N^*\rVert_{L^1(\Rp_N)},\quad \forall t\geq 0.
\end{equation*}
Here $t^*$ can be chosen as any positive constant.
\\[3pt] 
{\em Fourth step. Tightness.}
Since the steady state is also a solution of the evolution equation, we may use the result of Lemma~\ref{lemma:tightness} for $n_N^*([s]_N)$, and let $t\to \infty$. The announced result follows.
\end{proof}

%----------------------------------------------------

%---------------------------------------------------------------

%------------------------------------------------------
\section{Hierarchy model as the strong limit $N\rightarrow +\infty$}   
\label{Sec:hierarchy}
%------------------------------------------------------

It seems difficult to give a meaning to the boundary condition as $N \to \infty$ and to determine an appropriate notion of derivative in infinite dimensions. To avoid these difficulties, we propose to only define the $K$-marginales of the solution of the infinite-times renewal equation. Then we can write a form of BBGKY hierarchy as in classical kinetic theory, \cite{CIP1994}.

For that purpose, we assume that, for all $N$, the initial data are consistent and, in the sense of Definition~\ref{def:consistent}, there is a consistent family $n_{\infty}^{0,(K)}$ such that
\begin{equation} \label{ID:Limit}
\sup_K \|n_{N}^{(K)}(0)-n_{\infty}^{0,(K)} \|_{L^1(\Rp_K)} \to 0 \qquad \text{as} \quad N \to \infty.
\end{equation}

 Our strategy is as follows. Suppose that the limits  exist in a strong enough norm 
\begin{equation}
\label{Limit}
n_\infty^{(K)}  (t,[s]_{K})= \lim_{N \to \infty} n_{N}^{(K)}  (t,s_1,s_2,...s_{K}), \qquad K=1,\, 2,...
\end{equation}
The support property and the tightness bound in Lemma~\ref{lemma:tightness} for the $N$-times equation give
\[
n_\infty^{(K)}\geq 0 \quad \text{ is supported in }\; \mathcal{C}_K  \qquad \text{and} \qquad  \int_{\Rp_K} n_\infty^{(K)}  (t,[s]_{K}) d[s]_{K} = 1.
\]
In this limit, we obtain an infinitely coupled hierarchy system, for $K=1,\, 2,...$
\begin{equation}  \label{eqInfinity}
\left\{
\begin{matrix*}[l]
\partial_t n_\infty^{(K)}+ \displaystyle \sum_{i=1}^K \partial_{s_i} n_\infty^{(K)} +p_K([s]_K ) n_\infty^{(K)} + E_\infty^{(K)}(t,[s]_K) =0, 
\\[5pt]
n_\infty^{(K)}(t,s_1=0,s_2,...s_K)= \int_{u=0}^\infty \big[ p_K n_\infty^{(K)}+ E_\infty^{(K)} \big] (t,s_2,..., s_{K}, u)  du,
%\\[5pt]n_\infty^{(K)}(t,s_1,s_2, ...,s_i=0,..s_K)= 0, \quad i=2,...,K,
\\[5pt]
n_\infty^{(K)}(t=0,[s_K])=n_\infty^{0,{(K)}} ([s_K]).
\end{matrix*}
\right.
\end{equation}

The coupling term $E_\infty^{(K)}(t,[s]_K)$ is obtained passing formally to the limit in the expression~\eqref{couplingNK}, and reads
\begin{equation}  \label{ErInfinity}
E_\infty^{(K)}(t,[s]_K) = \sum_{i=K+1}^\infty \int_{s_{K+1}=0}^\infty \! ... \! \int_{s_{i}=0}^\infty  \varphi_i([s]_i)  n_\infty^{(i)}  (t, [s]_i) \,d[s]_{K+1, i}   ,  
\end{equation}
\[ 
\| E_\infty^{(K)}(t,\cdot) \|_{L^1(\Rp_K)} \leq  \sum_{i=K+1}^\infty \| \varphi_i \|_\infty  \| n_\infty^{(i)}\|_{L^1(\Rp^i)}.
\]
This hierarchy is our definition of the solution to the infinite-times equation, which is characterized by the following theorem.
%------------------------
\begin{theorem} [Strong convergence and uniqueness for the hierarchy] \label{th:uniqueness}
With Assumptions \eqref{as:RR1}, \eqref{as:ID}, \eqref{ID:Limit}, we have
\\
(i) For all $T>0$ and $K\in \N$, the sequence $\big\{n_{N}^{(K)}\big\}_N$ is a Cauchy sequence in $C\big([0,T]; L^1(\Rp_K) \big)$ and thus  it has a consistent limit $n_\infty^{(K)} \in C\big((0,\infty); L^1(\Rp_K)\big)$. 
\\
(ii) $E_N^{(K)}(t,[s]_K) \to E_\infty^{(K)}(t,[s]_K)$ in $C\big((0,T); L^1(\Rp_K) \big)$ as $N \to \infty$.
\\
(iii) $\big\{n_\infty^{(K)}\big\}_K$ is the unique consistent weak solution of the hierarchy~\eqref{eqInfinity}--\eqref{ErInfinity}. 
\\
(iv) Assuming also \eqref{as:IDbdd}, then $n_\infty^{(K)}(t,[s]_K) \leq C_K \bar n^{(K)}([s]_K)e^{\eps_K t} $ for all $t>0$.
\end{theorem}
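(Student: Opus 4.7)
My plan is to close a single Gronwall inequality on $\sup_K \|n_N^{(K)} - n_{N'}^{(K)}\|_{L^1}$ via the $L^1$-contraction property of renewal operators, then deduce (ii), (iii) and (iv) from essentially the same estimate.

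For (i), fix $N \leq N'$ and set $d^{(K)}(t) := n_N^{(K)}(t) - n_{N'}^{(K)}(t)$. Subtracting the two copies of \eqref{eqNK}, $d^{(K)}$ solves a $K$-dimensional renewal equation whose source $f^{(K)} := E_N^{(K)} - E_{N'}^{(K)}$ appears both in the interior and inside the boundary condition. The usual $L^1$-non-expansion computation (regularize and test with $\mathrm{sgn}(d^{(K)})$; the dissipation coming from $p_K$ exactly cancels the boundary contribution, leaving $2\|f^{(K)}\|_{L^1}$) gives
\begin{equation*}
\|d^{(K)}(t)\|_{L^1(\Rp_K)} \;\leq\; \|d^{(K)}(0)\|_{L^1(\Rp_K)} \;+\; 2 \int_0^t \|f^{(K)}(s)\|_{L^1(\Rp_K)}\, ds .
\end{equation*}
From \eqref{couplingNK} and the fact that each $n_{N'}^{(i)}$ is a probability we split
\begin{equation*}
\|f^{(K)}(s)\|_{L^1} \;\leq\; \sum_{i=K+1}^{N} \|\varphi_i\|_\infty\, \|d^{(i)}(s)\|_{L^1} \;+\; \eps_{N} ,
\end{equation*}
where the last term bounds $\sum_{i=N+1}^{N'} \|\varphi_i\|_\infty \leq \eps_N$. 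Set $M(t) := \sup_{K \geq 1} \|d^{(K)}(t)\|_{L^1}$. Since $\sum_{i \geq 1} \|\varphi_i\|_\infty \leq a_+$ by \eqref{as:RR1}, combining the two inequalities yields, for every $K$, $\|d^{(K)}(t)\|_{L^1} \leq M(0) + 2 a_+ \int_0^t M(s)\, ds + 2t\, \eps_N$; taking the sup and applying Gronwall give $M(t) \leq (M(0) + 2T\, \eps_N)\, e^{2a_+ T}$ on $[0,T]$. Assumption \eqref{ID:Limit} together with $\eps_N \to 0$ then forces $M \to 0$ uniformly on $[0,T]$, which is the Cauchy property uniformly in $K$; consistency of the limits $n_\infty^{(K)}$ passes to the limit in \eqref{MarginalesNK}.

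Point (ii) uses the analogous split with $n_\infty^{(i)}$ in place of $n_{N'}^{(i)}$, giving $\|E_N^{(K)} - E_\infty^{(K)}\|_{L^1} \leq a_+ \sup_{i \geq 1} \|n_N^{(i)} - n_\infty^{(i)}\|_{L^1} + \eps_N \to 0$ by (i). The existence part of (iii) follows by passing to the limit in the weak formulation of \eqref{eqNK} using (i) and (ii); for uniqueness, two consistent solutions of \eqref{eqInfinity}--\eqref{ErInfinity} with equal initial data yield differences satisfying the same inequality as above with the $\eps_N$ term removed, so the same Gronwall argument forces them to coincide. For (iv), Lemma~\ref{pr:iniformbdd} gives $n_N^{(K)}(t, [s]_K) \leq C_K e^{\eps_K t}\, \overline n_K([s]_K)$ uniformly in $N \geq K$; extracting, for each $t$, an a.e.-convergent subsequence from the $L^1$-convergence in (i) transfers the pointwise bound to $n_\infty^{(K)}$.

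The delicate point is closing the Gronwall estimate for $M$: the bound on $\|d^{(K)}\|_{L^1}$ always involves differences at every higher level $i > K$, so a level-by-level argument does not close. The summability \eqref{as:RR1} is precisely what allows one to compress the entire hierarchy into the single scalar $M(t)$ and to decouple it from itself with a constant $a_+$ independent of $K$.
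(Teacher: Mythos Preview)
Your argument is correct, but it follows a genuinely different route from the paper. The paper does \emph{not} close a Gronwall loop on $\sup_K\|d^{(K)}\|$. Instead, for $N_1\le N_2$ it compares directly at the top level $K=N_1$, where $n_{N_1}$ solves the full $N_1$-times equation with \emph{no} coupling term, so that the difference $m=n_{N_2}^{(N_1)}-n_{N_1}$ satisfies an equation whose only source is $E_{N_2}^{(N_1)}$; the estimate \eqref{Error_estimate} then yields the clean bound
\[
\|n_{N_2}^{(N_1)}(t)-n_{N_1}(t)\|_{L^1}\le \|n_{N_2}^{(N_1)}(0)-n_{N_1}(0)\|_{L^1}+2\eps_{N_1,N_2}\,t,
\]
and the $K$-marginal estimate follows since marginals are controlled by the full norm. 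Similarly, for uniqueness the paper bounds $\|n_\infty^{(K)}\|_{L^1}\le 4t\,\eps_K$ and lets $K\to\infty$, again avoiding Gronwall. Your approach, by contrast, packages the whole hierarchy into the scalar $M(t)=\sup_K\|d^{(K)}\|$ and decouples it via $\sum_i\|\varphi_i\|_\infty\le a_+$; this is conceptually appealing and closes in one shot, but produces an exponential prefactor $e^{2a_+T}$ in place of the paper's linear-in-$t$ bound. That sharper estimate \eqref{hm:bound-marginal} is not just cosmetic: it is precisely what is re-used in Section~\ref{sec:uniformError} (see \eqref{uit:bound-marginal}) to obtain the uniform-in-time approximation, where an exponential factor would be fatal. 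So your proof of Theorem~\ref{th:uniqueness} is valid, but the paper's method is both simpler (no Gronwall) and yields the quantitative estimate needed downstream.
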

The consistency property in (iii) allows us to define later a solution on $\Rp_\infty$, see Section~\ref{sec:MeasureSol}. Here, we again refer to weak solutions with an obvious extension of Definition~\ref{def:MSI_finite} for each $K$, see  Definition~\ref{def:hierarchy_weak_solution}. 

Notice that we also establish, see \eqref{hm:bound-marginal}, the following error estimates
\begin{align} \label{hierar:CVestimate2}
 \| n_N(t)-n_\infty^{(N)}(t) \|_{L^1(\Rp_N)} \leq \|n_{N}(0)-n_{\infty}^{(N)}(0)\|_{L^1(\Rp_N)}+ 2\eps_{N}t.
\end{align}

\begin{proof}
{\em First step. Cauchy sequence.} For $ N_1 \leq N_2$, we set $m(t):= n_{N_2}^{(N_1)}(t)-n_{N_1}(t)$. It satisfies 
\[
 \left\{
\begin{matrix*}[l]
\partial_t m  + \displaystyle \sum_{i=1}^{N_1} \partial_{s_i} m +p_{N_1}([s]_{N_1} ) m =  E_{N_2}^{(N_1)}, 
\\[5pt]
m(t,s_1=0,[s]_{2,N_1}) =  \int_{u=0}^\infty \big[ p_{N_1} m - E_{N_2}^{(N_1)} \big] (t,[s]_{2,N_1}, u)  du.
%\\[5pt]m(t,[s]_{1,i-1},s_i=0,[s]_{i+1,N_1})= 0, \quad i=2,...,N_1.
\end{matrix*}
\right.
\]
As a consequence, we also have 
\[
 \left\{
\begin{matrix*}[l]
\partial_t |m|  + \displaystyle \sum_{i=1}^{N_1} \partial_{s_i} |m| +p_{N_1}([s]_{N_1} ) |m| \leq | E_{N_2}^{(N_1)}|, 
\\[5pt]
|m|(t,s_1=0,[s]_{2,N_1})  \leq   \int_{u=0}^\infty \big[ p_{N_1} |m| + |E_{N_2}^{(N_1)}| \big] (t,[s]_{2,N_1}, u)  du.
%\\[5pt]|m|(t,[s]_{1,i-1},s_i=0,[s]_{i+1,N_1})= 0, \quad i=2,...,N_1.
\end{matrix*}
\right.
\]
Integrating this inequality gives
\begin{equation}\label{diff-m} %\label{EstCauchy1}
 \frac{d}{dt} \int_{\Rp_{N_1}} |m(t,[s]_{N_1})| d[s]_{N_1} \leq 2 \int_{\Rp_{N_1}} |E_{N_2}^{({N_1})}| d[s]_{N_1} .
\end{equation}

We now estimate $| E_{N_2}^{(N_1)}|$ using~\eqref{Error_estimate} and conclude
\[
2 \int_{\Rp_{N_1}} |E_{N_2}^{({N_1})}| d[s]_{N_1} \leq \sum_{i=N_1+1}^{N_2} \| \varphi_i \|_\infty=2\eps_{N_1,N_2}.
\]
From this, recalling \eqref{diff-m} and the definition of $m$, we immediately conclude that
\begin{equation} \label{hm:bound-marginal}
\|n_{N_2}^{(N_1)}(t)-n_{N_1}(t))\|_{L^1(\Rp_{N_1})}\leq \|n_{N_2}^{(N_1)}(0)-n_{N_1}(0)\|_{L^1(\Rp_{N_1})}+ 2\eps_{N_1,N_2}t, 
\end{equation}
and marginals being controlled by the total norm, we also have
\begin{equation*} %\label{uit:bound-marginal}
 \|n_{N_2}^{(K)}(t)-n_{N_1}^{(K)}(t))\|_{L^1(\Rp_{K})}\leq \|n_{N_2}^{(N_1)}(0)-n_{N_1}(0)\|_{L^1(\Rp_K)}+ 2\eps_{N_1,N_2}t, 
\end{equation*}
Thanks to Assumptions \eqref{as:RR1}, \eqref{ID:Limit}, this shows that the sequences $(n_{N}^{(K)})_N$ are Cauchy sequences and thus have limits $n_\infty^{(K)}$ as stated in (i).  Then, \eqref{hierar:CVestimate2} follows immediately from~\eqref{hm:bound-marginal}. 
\\[5pt]
%
%------------------------------
{\em Second step. Error estimate for $E_N^{(K)}$.}
This estimate follows from Assumption \eqref{as:RR1}. Departing from~\eqref{couplingNK} we have, fixing a $J<N$ (large enough)
\begin{align*}
| E_{N}^{(K)}-E_{\infty}^{(K)}| &\leq \sum_{i=K+1}^N \| \varphi_i \|_\infty \int_{s_K\leq s_{K+1}\leq...} |n_N^{(i)}-n_\infty^{(i)} | d[s]_{K+1,i} + \sum_{i=N+1}^\infty \| \varphi_i \|_\infty \int_{s_N\leq s_{N+1}\leq...} n_\infty^{(i)} d[s]_{N+1,i}
\\
& \leq \sum_{i=K+1}^J \| \varphi_i \|_\infty \int_{s_K\leq s_{K+1}\leq...} |n_N^{(i)}-n_\infty^{(i)} | d[s]_{K+1,i}   +3 \eps_J.
\end{align*}

For all $\epsilon >0$ we may first choose $J$ so that $\eps_J \leq \epsilon$ and then, $N$ large enough so that the first term on the right hand side is also less that $\epsilon$. This proves (ii).
\\[5pt]
%
%------------------------------
{\em Third step. Uniqueness.} To prove uniqueness is the same as showing that all the $n_\infty^{(K)}$ vanish  when for all the $n^{0,{(K)}} ([s]_K)$ vanish. Notice that here $n_\infty^{(K)}$ is the difference between two probability and thus has mass  controlled by $2$, not $1$.
\\

We have
\[
 \left\{
\begin{matrix*}[l]
\partial_t |n_\infty^{(K)}|+ \displaystyle \sum_{i=1}^K \partial_{s_i} |n_\infty^{(K)}| +p_K([s]_K ) |n_\infty^{(K)} |\leq | E_\infty^{(K)}(t,[s]_K) |, 
\\[5pt]
|n_\infty^{(K)}(t,s_1=0,[s]_{2,K})| \leq  \int_{u=0}^\infty \big[ p_K |n_\infty^{(K)}|+ |E_\infty^{(K)}| \big] (t,[s]_{2,K}, u)  du,
%\\[5pt]n_\infty^{(K)}(t,[s]_{1,i-1}.,s_i=0,[s]_{i+1,K})= 0, \quad i=2,...,K,
\\[5pt]
n_\infty^{(K)}(t=0,[s]_K)=0.
\end{matrix*}
\right.
\]
Therefore, integrating, we obtain
\[
\begin{split}
\frac{d}{dt} \int_{\Rp_K}&  |n_\infty^{(K)}| d[s]_K+ \int_{\Rp_K} p_K([s]_K ) |n_\infty^{(K)} |
 d[s]_K\\ 
\leq& \int_{\Rp_K} | E_\infty^{(K)}(t,[s]_K) | d[s]_K+ \int_{0\leq s_2\leq...\leq s_K} |n_\infty^{(K)}(t,s_1=0,[s]_{2,K})| d[s]_{2,K}, 
 \end{split}
\]
and
\[
\int_{0\leq s_2\leq ...\leq s_K} |n_\infty^{(K)}(t,s_1=0,s_2,...s_K)| d[s]_{2,K} = \int_{\Rp_K} \big[p_K([s]_K ) |n_\infty^{(K)}| + | E_\infty^{(K)}(t,[s]_K) |\big] d[s]_K ,
\]
so that we have 
\[
 \frac{d}{dt} \int_{\Rp_K}  |n_\infty^{(K)}| d[s]_K \leq 2  \int_{\Rp_K} | E_\infty^{(K)}(t,[s]_K) | d[s]_K \leq 4 \sum_{i=K+1} \| \varphi_i \|_\infty.
\] 

Finally, for all $J<K$, we conclude
\[
\int_{\Rp_J}  |n_\infty^{(J)}(t) | d[s]_J  \leq \int_{\Rp_K}  |n_\infty^{(K)}| d[s]_K \leq 4 t  \eps_K \to 0 \quad as \quad K\to  \infty.
\]
Therefore all marginales $n_\infty^{(J)}$ vanish. 

%------------------------------
{\em Fourth step. $L^\infty$ bound.} 
This follows directly from passing to the limit in the $L^\infty$ bound for $n_K$ as stated in Proposition~\ref{pr:iniformbdd}.

The proof of Theorem \ref{th:uniqueness} is complete.
\end{proof}
%-------------------------------------------------------
\section{Uniform-in-time limit as $N \to \infty$ by the Doeblin method}
\label{sec:uniformError}
%-------------------------------------------------------

\subsection{Idea and main result}

Theorem \ref{th:uniqueness} establishes the hierarchy solution $\{n^{(K)}_{\infty}(t)\}_K$ to the infinite-times equation \eqref{eqInfinity}. Such a solution is obtained via a \textit{local-in-time} limit of the $N$-times model $n_N(t)$ as expressed in \eqref{hierar:CVestimate2}. 
Here we are interested in a stronger \textit{uniform-in-time} limit, under an additional  smallness condition on $\eps_N$. Firstly,  we aim to show that
\begin{equation}\label{uniform-approximation-tmp}
    \sup_{t\in[0,+\infty)}\|n_N(t)-n_{\infty}^{(N)}(t)\|_{L^1(\Rp_N)}\rightarrow0,\qquad \text{as $N\rightarrow\infty$}.
\end{equation}
This estimate requires a long time control which we obtain using the Doeblin condition (to be introduced in \eqref{def-doeblin-cond-sec4}).

Secondly, since we have a uniform-in-time approximation, then the stationary solution of the infinite-times equation can be approximated by that of the $N$-times equation, thus showing that 
\begin{equation} \label{uniform-approximation-tmp2}
        \|n_{N}^*-n_{\infty}^{*,(N)}\|_{L^1(\Rp_N)} \to 0, \qquad \text{as} \quad N \to \infty.
    \end{equation}

Finally, for the $N$-times equation, its long time behavior has been characterized in Section \ref{sec:StStN}. Every solution converges to the unique steady state in $L^1$ norm with an exponential rate $\lambda_N$. However, such a rate $\lambda_N$ degenerates to $0$ as $N$ goes to infinity, which prevents us to deduce the long time behavior of the infinite-times equation directly. Nevertheless, under our smallness condition on $\eps_N$, it turns out that the degeneracy of $\lambda_N$ can be compensated to get for each $N$-marginal 
\begin{equation} \label{uniform-approximation-tmp3}
        n_{\infty}^{(N)}(t)\rightarrow n_{\infty}^{*,(N)},\quad \text{in }L^1(\Rp_N)\qquad \text{as }t\rightarrow\infty.
\end{equation}

Guided by these observations, we state the following results.

\begin{theorem}\label{thm:uni-n-time-concrete}
Assume \eqref{as:ID}, \eqref{as:RR1} and \eqref{as:RR2}. Additionally, assume that the renewal rate converges fast enough in the following sense
\begin{equation}\label{cond-epsN}
    \eps_N=o\left(\frac{1}{N}\left(\frac{a_-}{a_+}\right)^{N}\right),\quad \text{as }N\rightarrow\infty,\qquad \eps_N:=\sum_{i=N+1}^{\infty}\|\phi_i\|_{\infty}.
\end{equation}
Then, the convergence results \eqref{uniform-approximation-tmp}--\eqref{uniform-approximation-tmp3} hold and the steady state infinite hierarchy $n_\infty^{*,(N)}$ is unique.
\end{theorem}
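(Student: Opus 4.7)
The goal is to upgrade the local-in-time error estimate \eqref{hierar:CVestimate2} into a uniform-in-time bound by combining it with the Doeblin contraction from Theorem~\ref{thm:ststN}; the smallness hypothesis \eqref{cond-epsN} will be exactly what balances the exponentially small contraction rate $\alpha_N\sim (a_-/a_+)^N$ against the linearly growing source error $\eps_N t$.

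\textbf{Iterative contraction.} Set $m(t):=n_N(t)-n_\infty^{(N)}(t)$. Taking $K=N$ in \eqref{eqInfinity}, $m$ solves an $N$-times transport-renewal equation with a bulk source $+E_\infty^{(N)}$ (of $L^1$-size bounded by $\eps_N$) and a correspondingly modified boundary condition; its total mass vanishes at $t=0$ and is conserved. Split $m=m_1+m_2$ where $m_1$ is the homogeneous $N$-times evolution of $m(0)$ and $m_2$ starts from zero with the source alone. Since $m(0)$ is a zero-mean signed measure, the Doeblin lower bound \eqref{eq:doeblin_lower_bound} that underlies Theorem~\ref{thm:ststN} gives $\|m_1(t^*)\|_{L^1}\leq (1-\alpha_N(t^*))\|m(0)\|_{L^1}$ (via the usual normalization of the positive and negative parts). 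Estimate \eqref{hierar:CVestimate2} applied with zero initial data gives $\|m_2(t^*)\|_{L^1}\leq 2\eps_N t^*$. Iterating on intervals of length $t^*$,
\begin{equation*}
\|m(kt^*)\|_{L^1}\leq (1-\alpha_N(t^*))^k\|m(0)\|_{L^1}+\frac{2\eps_N t^*}{\alpha_N(t^*)}.
\end{equation*}

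\textbf{Balancing and long-time behavior.} The change of variable $u=a_+s$ in \eqref{LTB:numbers} rewrites $\alpha_N(t^*)$ as $(a_-/a_+)^N$ times the probability that a Gamma$(N,1)$ random variable lies in $[0,a_+ t^*]$; choosing $t_N^*=2N/a_+$ and using Chebyshev's inequality (mean $N$, variance $N$) gives $\alpha_N(t_N^*)\geq \tfrac12(a_-/a_+)^N$ for $N$ large. Hence $\eps_N t_N^*/\alpha_N(t_N^*)\lesssim N\eps_N(a_+/a_-)^N\to 0$ by \eqref{cond-epsN}, and combined with \eqref{ID:Limit} this yields \eqref{uniform-approximation-tmp}. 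For fixed $N$ and times $s,t>0$,
\begin{equation*}
\|n_\infty^{(N)}(t+s)-n_\infty^{(N)}(t)\|_{L^1}\leq 2\sup_\tau\|m(\tau)\|_{L^1}+\|n_N(t+s)-n_N(t)\|_{L^1};
\end{equation*}
making the first term as small as we wish by choosing $N$ large, the second term vanishes as $t\to\infty$ by Theorem~\ref{thm:ststN}. Thus $n_\infty^{(N)}(t)$ is $L^1$-Cauchy and converges to some $n_\infty^{*,(N)}$, proving \eqref{uniform-approximation-tmp3}; sending $t\to\infty$ in the uniform bound and invoking $n_N(t)\to n_N^*$ delivers \eqref{uniform-approximation-tmp2}.

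\textbf{Uniqueness and the main obstacle.} For two steady state hierarchies $\{n_a^{*,(K)}\}_K$, $\{n_b^{*,(K)}\}_K$, the time-independent difference $d^{(N)}:=n_a^{*,(N)}-n_b^{*,(N)}$ has zero mean and satisfies the very same Duhamel iteration; the fact that $d^{(N)}$ is constant in time collapses it into the fixed-point inequality
\begin{equation*}
\|d^{(N)}\|_{L^1}\leq (1-\alpha_N(t^*))\|d^{(N)}\|_{L^1}+2\eps_N t^*,
\end{equation*}
so $\|d^{(N)}\|_{L^1}\leq 2\eps_N t_N^*/\alpha_N(t_N^*)\to 0$. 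Consistency $n_a^{*,(K)}=\int n_a^{*,(N)}\,d[s]_{K+1,N}$ gives $\|n_a^{*,(K)}-n_b^{*,(K)}\|_{L^1}\leq\|d^{(N)}\|_{L^1}$ for every $K\leq N$, so sending $N\to\infty$ forces $n_a^{*,(K)}=n_b^{*,(K)}$ for all $K$. The delicate point throughout is the iterative contraction: one must verify that the Doeblin lower bound produces a genuine $L^1$-contraction on all zero-mean signed measures (and not just on differences of probabilities), and that the choice $t_N^*\sim N/a_+$ is sharp enough that $\alpha_N(t_N^*)$ saturates its large-$t^*$ limit $(a_-/a_+)^N$, which is precisely what makes the hypothesis \eqref{cond-epsN} just sufficient.
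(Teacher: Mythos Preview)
Your proof is correct and follows essentially the same strategy as the paper: iterate the local-in-time estimate \eqref{hierar:CVestimate2} against the Doeblin contraction on intervals of length $t_N^*\sim N/a_+$, then use \eqref{cond-epsN} to make $\eps_N t_N^*/\alpha_N(t_N^*)\to 0$. The paper organizes this slightly differently—it first proves an abstract version (Proposition~\ref{prop:uni-in-time-1} and Theorem~\ref{thm:uni-n-time}) comparing the two probabilities $n_K(\tau)$ and $S_{t_K^*}^{(K)}(n_N^{(K)}(\tau))$ directly rather than decomposing the signed measure $m=m_1+m_2$, and it uses the incomplete-gamma bound $\int_0^N \tfrac{s^{N-1}}{(N-1)!}e^{-s}\,ds\geq \tfrac12$ with $t_N^*=N/a_+$ instead of Chebyshev with $t_N^*=2N/a_+$—but these are cosmetic; one small presentational slip to fix is that in your Cauchy argument for \eqref{uniform-approximation-tmp3} you write ``for fixed $N$'' and then ``choose $N$ large,'' whereas you mean to bound the $K$-th marginal via an auxiliary $N\geq K$ and then send that auxiliary index to infinity.
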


To prove this theorem, we first give a general framework in subsection~\ref{subsec:uni-time-abstract}, which motivates results on the uniform-in-time limit and its consequences with an abstract smallness condition in subsection~\ref{subsec:N-long-time-revisited}. In fact Theorem~\ref{thm:uni-n-time-concrete} is a specific version of the more general and precise Theorem~\ref{thm:uni-n-time}. To get the specific condition \eqref{cond-epsN}, in subsection~\ref{subsec:strong_condition} we study carefully the constants in subsection~\ref{sec:StStN}.

%\xd{strictly speaking, it seems that up to now (indeed until in section 5) we do not define precisely $n_{\infty}(t)$ but only $n_{\infty}^{(K)}(t)$. shall we avoid the usage of $n_{\infty}(t)$ here? or shall we add a short remark at the end of section 3 like: Thanks to the consistency of $n_{\infty}^{(K)}(t)$, by the Kolmogorov extension theorem we can build a measure on $\Rp^{\mathbb{N}}$, whose $K$-marginal is $n_{\infty}^{(K)}$. Denoting the measure as $n_{\infty}$ also justifies our notation for $n_{\infty}^{(K)}$ ?}

%-----------------------------------------------
\subsection{A framework towards uniform-in-time approximation}
\label{subsec:uni-time-abstract}

We start with a local-in-time control on the $K$ marginal of the solution $n_N$ of \eqref{eqKTR} with a consistent family of initial probability densities. From the proof of \eqref{hm:bound-marginal} in Section~\ref{Sec:hierarchy}, with $N_1=K,N_2=N$, we infer the control for all $t\geq 0$ and $\tau \geq 0$, 
\begin{equation} \label{uit:bound-marginal}
    \|n_{K}(t+\tau)-n_{N}^{(K)}(t+\tau )\|_{L^1(\Rp_K)}\leq \|n_{K}(\tau)-n_{N}^{(K)}(\tau)\|_{L^1(\Rp_K)}+ 2\eps_{K,N}t,
\end{equation}

Now we try to utilize the Doeblin condition to give a uniform-in-time bound. We say the $K$-times problem satisfies the Doeblin condition if there exists $t_K^*>0$, $\alpha_K\in(0,1)$ and a probability density $\nu_K$ such that
\begin{equation}\label{def-doeblin-cond-sec4}
    n_{K}(t_K^*)\geq \alpha_K \nu_K,
\end{equation} 
independently of the initial probability density $n_{K}(0)$. 

\begin{remark}
 In Section \ref{sec:StStN}, the Doeblin condition \eqref{def-doeblin-cond-sec4} is established under Assumption~\eqref{as:RR2}, with concrete expressions on the constants $\alpha$ and $t^*$. In this subsection, we do not assume \eqref{as:RR2} but work with the more abstract condition~\eqref{def-doeblin-cond-sec4}. We establish concretely this condition using Assumption \eqref{as:RR2} in subsection~\ref{subsec:strong_condition}.
\end{remark}

\begin{proposition}\label{prop:uni-in-time-1}
Given $K$, suppose the $K$-times problem satisfies the Doeblin condition \eqref{def-doeblin-cond-sec4}. Then with Assumptions \eqref{as:RR1} and \eqref{def-eps}, we have the uniform estimates
\begin{equation}\label{eq:uni-time-1}
\limsup_{k \rightarrow\infty}\|n_{K}(kt_{K}^*)-n_{N}^{(K)}(kt_{K}^*)\|_{L^1(\Rp_K)}\leq 2\frac{t_{K}^*\eps_{K,N}}{\alpha_{K}},
\end{equation}
\begin{equation}\label{eq:uni-time-str}
        \limsup_{t\rightarrow+\infty}\|n_{K}(t)-n_{N}^{(K)}(t)\|_{L^1(\Rp_K)}\leq 4\frac{t_{K}^*\eps_{K,N}}{\alpha_{K}}. 
\end{equation}
\end{proposition}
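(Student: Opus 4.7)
The plan is to iterate the Doeblin contraction for the pure $K$-times semigroup over time slices of length $t_K^*$, while absorbing on each slice the source error $E_N^{(K)}$ that distinguishes $n_N^{(K)}$ from a genuine $K$-times solution. Both $n_K(t)$ and $n_N^{(K)}(t)$ are probability densities on $\mathcal{C}_K$ (the latter by \eqref{Marginales} and the conservation property \eqref{eq:conservation}), so the standard Doeblin contraction recalled in Appendix~\ref{sec:Doeblin_Theorem} will apply to differences of any two $K$-times solutions issued from such data.

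Concretely, I set $a_k := \|n_K(k t_K^*) - n_N^{(K)}(k t_K^*)\|_{L^1(\Rp_K)}$ and at each step insert an auxiliary trajectory $\tilde n_K(s)$, defined as the solution of the pure $K$-times equation \eqref{eqKTR} with initial datum $\tilde n_K(0) = n_N^{(K)}(k t_K^*)$. Splitting via $\tilde n_K(t_K^*)$ produces two contributions. The first, $\|n_K((k+1) t_K^*) - \tilde n_K(t_K^*)\|_{L^1(\Rp_K)}$, compares two $K$-times solutions over a lapse $t_K^*$, so the Doeblin hypothesis \eqref{def-doeblin-cond-sec4} contracts it by $(1-\alpha_K)$ down to $(1-\alpha_K)\, a_k$. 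The second, $\|\tilde n_K(t_K^*) - n_N^{(K)}((k+1) t_K^*)\|_{L^1(\Rp_K)}$, measures how far the marginal drifts from a true $K$-times solution starting from the same datum; the argument behind \eqref{uit:bound-marginal} (zero initial difference, driven only by a source $E_N^{(K)}$ of $L^1$ mass at most $\eps_{K,N}$) yields the bound $2 \eps_{K,N} t_K^*$. Adding the two gives the scalar linear recursion $a_{k+1} \leq (1-\alpha_K)\, a_k + 2 \eps_{K,N} t_K^*$, whose geometric convergence to the fixed point $2 t_K^* \eps_{K,N}/\alpha_K$ is exactly \eqref{eq:uni-time-1}.

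For the continuous statement \eqref{eq:uni-time-str}, I will decompose an arbitrary $t \geq 0$ as $t = k t_K^* + r$ with $r \in [0, t_K^*)$ and apply the local estimate \eqref{uit:bound-marginal} on the remainder interval of length $r$ starting at $k t_K^*$, giving $\|n_K(t) - n_N^{(K)}(t)\|_{L^1(\Rp_K)} \leq a_k + 2 \eps_{K,N} t_K^*$. Passing to the $\limsup$ and inserting \eqref{eq:uni-time-1} together with $\alpha_K \leq 1$ then produces the announced bound $4 t_K^* \eps_{K,N}/\alpha_K$. The only point genuinely requiring care, and thus the main potential obstacle, is legitimizing the Doeblin step on $n_K((k+1)t_K^*) - \tilde n_K(t_K^*)$: this rests on both $n_K(k t_K^*)$ and $\tilde n_K(0) = n_N^{(K)}(k t_K^*)$ being bona fide probability densities on $\mathcal{C}_K$, which follows from the non-negativity, support, and mass-preservation of the $N$-times flow in \eqref{eq:conservation} combined with \eqref{MarginalesNK}. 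Everything else is a routine interaction between the scalar recursion and the pre-existing local-in-time estimate.
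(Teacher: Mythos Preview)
Your proposal is correct and matches the paper's proof essentially line for line: the paper also inserts the auxiliary $K$-times trajectory $S^{(K)}_{t_K^*}\bigl(n_N^{(K)}(\tau)\bigr)$, applies the Doeblin contraction to the first piece and the local estimate~\eqref{uit:bound-marginal} (with zero initial difference) to the second, obtains the same recursion $a_{k+1}\leq(1-\alpha_K)a_k+2\eps_{K,N}t_K^*$, and then handles the continuous $\limsup$ by combining the iterated bound with~\eqref{uit:bound-marginal} on a residual interval shorter than $t_K^*$. The only cosmetic difference is that the paper carries a general shift $\tau$ through the iteration before specialising, whereas you iterate at the discrete times and apply the local estimate on the final fractional interval; both routes give the same constant $4t_K^*\eps_{K,N}/\alpha_K$.
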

%----------------------------------------
\begin{proof}[Proof of Proposition \ref{prop:uni-in-time-1}] 
Firstly, we prove that, for all $\tau \geq 0$, %\xd{change some $\alpha$ into $\alpha_K$?(but this will make equations longer)} \red{BP:done, I think}
\begin{equation}\label{eq:uni-time-tmp-key}
  \|n_{K}(t^*_K+\tau)-n_{N}^{(K)}(t^*_K+\tau)\|_{L^1(\Rp^K)}  \leq (1-\alpha_K)\|n_{K}(\tau)-n_{N}^{(K)}(\tau)\|_{L^1(\Rp_K)}+2\eps_{K,N}t^*_K.
\end{equation}
To do so, we denote by $S^{(K)}_{t}$ the semi-group associated with the $K$-times problem. Using the Doeblin Theorem, we have
\begin{equation*}
    \|n_{K}(t^*_K+\tau)-S^{(K)}_{t^*_K}\bigl(n_{N}^{(K)}(\tau)\bigr)\|_{L^1(\Rp_K)}\leq (1-\alpha_K)\|n_{K}(\tau)-n_{N}^{(K)}(\tau)\|_{L^1(\Rp_K)}.
\end{equation*} 
Then, applying the control \eqref{uit:bound-marginal} to $S^{(K)}_{t^*_K}\bigl(n_{N}^{(K)}(\tau)\bigr)$ and $n_{N}(t^*_K+\tau)$, we have
\begin{equation*}
    \|S^{(K)}_{t^*_K}\bigl(n_{N}^{(K)}(\tau)\bigr)-n_{N}^{(K)}(t^*_K+\tau)\|_{L^1(\Rp_K)}\leq\|n_{N}^{(K)}(\tau)-n_{N}^{(K)}(\tau)\|_{L^1(\Rp_K)}+2\eps_{K,N}t^*_K= 2\eps_{K,N}t^*_K.
\end{equation*}
We combine the above two estimates via the triangle inequality to obtain
\begin{align*}
  \|n_{K}(t^*_K+\tau)-n_{N}^{(K)}(t^*_K+\tau)\|_{L^1(\Rp_K)}\leq (1-\alpha_K)\|n_{K}(\tau)-n_{N}^{(K)}(\tau)\|_{L^1(\Rp_K)}+2\eps_{K,N}t^*_K.
\end{align*}

Secondly, we iterate \eqref{eq:uni-time-tmp-key} to obtain 
\begin{equation*}
    \|n_{K}(kt^*+\tau)-n_{N}^{(K)}(kt^*+\tau)\|_{L^1(\Rp_K)}\leq (1-\alpha_K)^k \|n_{K}(\tau)-n_{N}^{(K)}(\tau)\|_{L^1(\Rp_K)}+2t^*\eps_{K,N}\left(\sum_{i=0}^{k-1}(1-\alpha_K)^i\right)
\end{equation*} 
and,   this gives 
\begin{equation}\label{eq:uni-time-1-2}
\|n_{K}(kt^*+\tau)-n_{N}^{(K)}(kt^*+\tau)\|_{L^1(\Rp_K)}\leq (1-\alpha_K)^k \|n_{K}(\tau)-n_{N}^{(K)}(\tau)\|_{L^1(\Rp_K)}+ 2\frac{t_{K}^*\eps_{K,N}}{\alpha_{K}}.
\end{equation}
Choosing $\tau=0$, the inequality \eqref{eq:uni-time-str} follows immediately.
\\

Thirdly, for $\tau \in (0, t^*_K)$ we use the control \eqref{hm:bound-marginal} to obtain
\begin{equation*}
\begin{aligned}
 \|n_{K}(\tau)-n_{N}^{(K)}(\tau)\|_{L^1(\Rp_K)}
 &\leq \|n_{K}(0)-n_{N}^{(K)}(0)\|_{L^1(\Rp_K)}+2\tau\eps_{K,N}
 \\ &\leq   \|n_{K}(0)-n_{N}^{(K)}(0)\|_{L^1(\Rp_K)}+2\frac{t^*_K\eps_{K,N}}{\alpha_K}.
\end{aligned}
\end{equation*}
Combined with \eqref{eq:uni-time-1-2}, it gives
\begin{equation}\label{eq:uni-time-1-3}
    \|n_{K}(kt^*_K+\tau)-n_{N}^{(K)}(kt^*_K+\tau)\|_{L^1(\Rp^K)}\leq (1-\alpha_K)^k \|n_{K}(0)-n_{N}^{(K)}(0)\|_{L^1(\Rp_K)}+ 4\frac{t_{K}^*\eps_{K,N}}{\alpha_{K}}.
\end{equation}
And then \eqref{eq:uni-time-str} follows.
\end{proof}

A steady state $n^*_K$ for the $K$ problem can be built via the Doeblin method as in Section~\ref{sec:StStN}. By Proposition \ref{prop:uni-in-time-1}, we immediately deduce an estimate between the steady states.%, assuming the existence of a discrete long time limit for the $N$-problem according to~\eqref{eq:uni-time-1}.  \xd{the existence of the steady state is according to \eqref{def-doeblin-cond-sec4}?}\red{I do not think so} \xd{strictly speaking, the assumption in sec 2.4 is more concrete than the assumption here. Do we need to mention again this issue here? (we already mention this in remark 8 (but did not comment on the steady state ther)) or add a sentence in remark 8?}
%---------------------------------------
\begin{corollary}[Distance between the steady states]\label{cor:dis-steady} Under the same condition as in Proposition \ref{prop:uni-in-time-1}, assume for $K<N$ the steady states for the $N$-times and the $K$-times problem exist. Then we have 
\begin{equation}
    \|n_{K}^*-n_{N}^{*,(K)}\|_{L^1(\Rp^K)}\leq 2\frac{t_{K}^*\eps_{K,N}}{\alpha_{K}}.
\end{equation}
\end{corollary}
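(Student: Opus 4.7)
\textbf{Proof plan for Corollary~\ref{cor:dis-steady}.} The strategy is to specialize the uniform-in-time estimate \eqref{eq:uni-time-1} of Proposition~\ref{prop:uni-in-time-1} by choosing the initial data in each system to be the corresponding steady state. Since both steady states are probability densities supported in $\mathcal{C}_N$ and $\mathcal{C}_K$ respectively (so that Assumption \eqref{as:ID} is satisfied), they are admissible initial data. Then the Doeblin-based estimate collapses to the desired static inequality.

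Concretely, I would first take $n_N(0) = n_N^*$, so that by uniqueness of the evolution one has $n_N(t) = n_N^*$ for all $t \geq 0$. Integrating out the last $N-K$ variables and using the linearity of the marginal operation with the fact that marginalization commutes with time evolution of the stationary profile, this yields $n_N^{(K)}(t) = n_N^{*,(K)}$ for every $t \geq 0$. Similarly, taking $n_K(0) = n_K^*$ as the initial datum of the $K$-times problem gives $n_K(t) = n_K^*$ for all $t \geq 0$.

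Substituting into \eqref{eq:uni-time-1}, the quantity $\|n_K(kt_K^*) - n_N^{(K)}(kt_K^*)\|_{L^1(\mathcal{C}_K)}$ is independent of $k$ and equals $\|n_K^* - n_N^{*,(K)}\|_{L^1(\mathcal{C}_K)}$, so the $\limsup$ is just this value, and we obtain
\[
\|n_K^* - n_N^{*,(K)}\|_{L^1(\mathcal{C}_K)} \leq 2\frac{t_K^* \eps_{K,N}}{\alpha_K},
\]
which is exactly the claim.

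There is no real obstacle here: the statement is essentially an immediate corollary once one recognizes that initializing with the steady states turns the uniform-in-time bound of Proposition~\ref{prop:uni-in-time-1} into a time-independent comparison. The only small point worth making explicit is that marginalizing the stationary $N$-times solution yields the stationary marginal $n_N^{*,(K)}$, which is why the right-hand object in the uniform estimate is precisely the one appearing in the corollary.
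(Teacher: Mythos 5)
Your proof is correct and follows essentially the paper's route: the corollary is a direct specialization of estimate \eqref{eq:uni-time-1} from Proposition~\ref{prop:uni-in-time-1}. Your particular choice of initializing at the steady states (rather than passing to the $t\to\infty$ limit with generic initial data and invoking Doeblin convergence) is a slightly cleaner bookkeeping of the same argument, and it is valid since \eqref{eq:uni-time-1} places no consistency requirement on the two initial data.
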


Note that the bounds in Proposition \ref{prop:uni-in-time-1}, albeit uniform-in-time, might not be meaningful as $N\rightarrow\infty$. This motivates us to look into the quantity 
\begin{equation*}
    2\frac{t_{K}^*\eps_{K,N}}{\alpha_{K}}\leq 2\frac{t_{K}^*\eps_{K}}{\alpha_{K}},
\end{equation*}
where $\eps_K$ is defined in \eqref{def-eps}. A uniform-in-time  limit as $N\rightarrow\infty$ would be obtained if we can choose $\alpha_N,t^*_N$ such that 
\begin{equation}\label{uni-condition}
   2\frac{t_{N}^*\eps_{N}}{\alpha_{N}}\rightarrow0,\qquad \text{as }N\rightarrow\infty.
\end{equation}
Here the decay property of the renewal rate plays a central role. As we discuss it later, generically ${t^*_N}/{\alpha_N}$ diverges to infinity as $N\rightarrow\infty$. Thus we need that $\eps_N$ decays fast enough to compensate the divergence of ${t^*_N}/{\alpha_N}$. 

%----------------------------------------------
\subsection{Uniform-in-time limit and consequences} 
\label{subsec:N-long-time-revisited}

Before giving concrete conditions to ensure \eqref{uni-condition}, we first improve Theorem~\ref{th:uniqueness} when \eqref{uni-condition} is satisfied.

% The conditions to ensure 
% \begin{equation}\label{uni-condition}
%   2\frac{t_{N}^*\eps_{N}}{\alpha_{N}}\rightarrow0,\qquad N\rightarrow\infty,
% \end{equation} will be discussed later. Before that, we first state the consequences when \eqref{uni-condition} is true.

% Intuitively, we may can choose $\alpha_N,t^*_N$ such that
% \begin{equation*}
%     \alpha_N=O(1),\quad t^*_N=O(N),\qquad N\rightarrow\infty.
% \end{equation*} If so [this turns out to be too optimistic, in the formula \eqref{doeblin-constant-n} we can scale out an additional exponential factor. TBA], then if $\eps_N=o(\frac{1}{N})$  we obtain
% \begin{equation}\label{uni-condition}
%   2\frac{t_{N}^*\eps_{N}}{\alpha_{N}}\rightarrow0,\qquad N\rightarrow\infty.
% \end{equation} 
%Now we state the consequences when \eqref{uni-condition} is true.

%----------------------------------------------
\begin{theorem}\label{thm:uni-n-time} Assume \eqref{as:RR1} and that for each $K$ the Doeblin condition \eqref{def-doeblin-cond-sec4} is satisfied with $\alpha_K,t^*_K>0$. If additionally \eqref{uni-condition} is true, then we have
\\[2pt]
(i) (Uniform-in-time limit) 
      Assume the initial data satisfy \eqref{ID:Limit}.
      %and let $n_N(t)$ be solutions to the $N$-times equations. 
      Then for each $K$, $n_N^{(K)}(t)$ is a Cauchy sequence in $C_b([0,+\infty),L^1(\Rp^K))$, whose limit $n_{\infty}^{(K)}(t)$ is the hierarchy solution to the infinite-times equation \eqref{eqInfinity}. Moreover, we have a uniform-in-time approximation %without fixing $K$ %Let $n_{\infty}(t)$ be the corresponding solution to the infinite-times problem. Then% for each $K\in\mathbb{N}$,
    \begin{equation}\label{uni-time-limit-thm-statement}
            \sup_{t\in[0,+\infty)}\|n_N(t)-n_{\infty}^{(N)}(t)\|_{L^1(\Rp^N)}\rightarrow0,\qquad \text{as $N\rightarrow\infty$}.
    \end{equation}
(ii) (Limit of the steady state) %Let $n_N^*$ be the unique steady state of the $N$-times equation. Then 
    As $N$ goes to infinity, $n_N^{*,(K)}$ has a unique (strong) limit $n_{\infty}^{*,(K)}$, which is a steady state of the infinite-times hierarchy and we have the error estimate
    \begin{equation}\label{uni-steady-statement}
        \|n_{N}^*-n_{\infty}^{*,(N)}\|_{L^1(\Rp_N)}\leq  2\frac{t_{N}^*\eps_{N}}{\alpha_{N}} \to 0, \qquad \text{as} \quad N \to \infty.
    \end{equation}%4\frac{N\eps_{N}}{a_{-}}\left(\frac{a_+}{a_-}\right)^{N} 
(iii) (Long term behavior for the infinite-times equation) For a general solution of the infinite-times equation $n_{\infty}(t)$, its every marginal converges in time to the marginal of $n_{\infty}^*$. For every $N$, we have
    \begin{equation}
        n_{\infty}^{(N)}(t)\rightarrow n_{\infty}^{*,(N)},\quad \text{in }L^1(\Rp_N)\qquad \text{as }t\rightarrow\infty.
    \end{equation}
(iv) (Uniqueness)  The steady state solution $n_{\infty}^{*, (N)}$ of the infinite-times equation, as defined above, is unique.
    %\xd{shall we mention in the statement for each $K\in\mathbb{N}$, ``$n_N^{(K)}(t)$ is a Cauchy sequence in $C_b([0,+\infty),L^1(\Rp^K))$''(which is actually contained in \eqref{uni-time-limit-statement-tmp})?}
    %The N-times problem converges to the infinite problem uniformly in time. (Precisely ...)
\end{theorem}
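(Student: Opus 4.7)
The plan is to use Proposition~\ref{prop:uni-in-time-1} and Corollary~\ref{cor:dis-steady} as the two workhorses, interleaving the limits $N'\to\infty$ (to produce the infinite hierarchy), $N\to\infty$ (to exhaust the renewal chain), and $t\to\infty$ (for long-time behavior). The decisive error term in every estimate will take the form $t_N^*\eps_N/\alpha_N$, which vanishes exactly by assumption~\eqref{uni-condition}; the small technical ingredient is that the $L^1$ norm of a marginal is controlled by the $L^1$ norm at the highest available dimension, so any uniform-in-time bound at level $N$ descends to levels $K\leq N$.

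\textbf{Part (i).} I apply the bound~\eqref{eq:uni-time-1-3} of Proposition~\ref{prop:uni-in-time-1} with $K=N$ and $N$ replaced by $N'>N$; using $(1-\alpha_N)^k\leq 1$, this yields, uniformly in $t\geq 0$,
\[
\|n_N(t)-n_{N'}^{(N)}(t)\|_{L^1(\Rp_N)}\leq \|n_N(0)-n_{N'}^{(N)}(0)\|_{L^1(\Rp_N)}+4\frac{t^*_N\,\eps_{N,N'}}{\alpha_N}.
\]
Letting $N'\to\infty$ with $N$ fixed, Theorem~\ref{th:uniqueness} provides the locally-in-time convergence $n_{N'}^{(N)}(t)\to n_\infty^{(N)}(t)$ in $L^1$, assumption~\eqref{ID:Limit} handles the initial-data term, and $\eps_{N,N'}\to\eps_N$; one arrives at
\[
\sup_{t\geq 0}\|n_N(t)-n_\infty^{(N)}(t)\|_{L^1(\Rp_N)}\leq \|n_N(0)-n_\infty^{0,(N)}\|_{L^1(\Rp_N)}+4\frac{t^*_N\,\eps_{N}}{\alpha_N}.
\]
Sending $N\to\infty$ and using~\eqref{ID:Limit} together with~\eqref{uni-condition} proves~\eqref{uni-time-limit-thm-statement}. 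The Cauchy property of $\{n_N^{(K)}\}_N$ in $C_b([0,\infty); L^1(\Rp_K))$ for fixed $K$ then follows from the marginal contraction $\|n_N^{(K)}(t)-n_\infty^{(K)}(t)\|_{L^1(\Rp_K)}\leq \|n_N(t)-n_\infty^{(N)}(t)\|_{L^1(\Rp_N)}$, consistency giving $n_\infty^{(K)}=(n_\infty^{(N)})^{(K)}$.

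\textbf{Parts (ii)--(iv).} For (ii), I show that $(n_N^{*,(K)})_N$ is Cauchy in $L^1(\Rp_K)$ for each fixed $K$. For $K\leq N'<N''$, I apply Corollary~\ref{cor:dis-steady} with base dimension $N'$ (\emph{not} $K$) and then contract to the $K$-marginal,
\[
\|n_{N'}^{*,(K)}-n_{N''}^{*,(K)}\|_{L^1(\Rp_K)} \leq \|n_{N'}^{*}-n_{N''}^{*,(N')}\|_{L^1(\Rp_{N'})} \leq 2\frac{t_{N'}^*\,\eps_{N'}}{\alpha_{N'}}\longrightarrow 0.
\]
The limit $n_\infty^{*,(K)}$ is automatically consistent in $K$; passing to the limit in the stationary marginal equations (coupling terms handled exactly as in the third step of the proof of Theorem~\ref{th:uniqueness}) shows it is a steady state of the hierarchy, and~\eqref{uni-steady-statement} is obtained by sending $N''\to\infty$ in Corollary~\ref{cor:dis-steady} with $K=N$. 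For (iii), given any hierarchy solution $n_\infty(t)$, I insert $n_{N'}^{(N)}(t)$ as an intermediate,
\[
\|n_\infty^{(N)}(t)-n_\infty^{*,(N)}\|_{L^1} \leq \|n_\infty^{(N)}(t)-n_{N'}^{(N)}(t)\|_{L^1} + \|n_{N'}^{(N)}(t)-n_{N'}^{*,(N)}\|_{L^1} + \|n_{N'}^{*,(N)}-n_\infty^{*,(N)}\|_{L^1}.
\]
By (i) and (ii) the first and last terms are $\leq\epsilon/3$ for $N'$ large; for that fixed $N'$ the middle term decays exponentially in $t$ by Theorem~\ref{thm:ststN}, so a standard $\epsilon/3$ argument concludes. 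Finally, (iv) is immediate: any other steady hierarchy is a constant-in-time hierarchy solution, which (iii) forces to equal $n_\infty^{*,(N)}$.

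\textbf{Main obstacle.} The principal difficulty is the degeneracy of the Doeblin constants $\alpha_N,t_N^*$ as $N\to\infty$: the naive bound $t_K^*\eps_{K,N}/\alpha_K$ at a fixed base dimension $K$ does \emph{not} decay in $N$ (it saturates at $t_K^*\eps_K/\alpha_K$). The fix, already visible in Part~(ii), is to apply Proposition~\ref{prop:uni-in-time-1} and Corollary~\ref{cor:dis-steady} at the largest available dimension and only then contract down to the $K$-marginal, so that the small factor is $\eps_{N'}$ rather than $\eps_K$, at which point \eqref{uni-condition} does the work. Keeping track of which dimension plays the role of ``base'' and in which order the three limits $N'\to\infty$, $N\to\infty$, $t\to\infty$ are taken is the main bookkeeping challenge of the proof.
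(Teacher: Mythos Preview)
Your proposal is correct and follows essentially the same route as the paper: you use Proposition~\ref{prop:uni-in-time-1} (specifically the bound~\eqref{eq:uni-time-1-3}) at the highest available dimension and then contract to lower marginals, exactly as the paper does, and your three-term splitting in Part~(iii) matches the paper's triangle inequality. The only organizational difference is that in Part~(i) you first invoke Theorem~\ref{th:uniqueness} to know the pointwise-in-$t$ limit $n_\infty^{(N)}(t)$ exists and then pass $N'\to\infty$ inside~\eqref{eq:uni-time-1-3}, whereas the paper argues the $C_b$-Cauchy property directly from~\eqref{eq:uni-time-1-3} and defines the limit afterwards; both are equivalent. One small point worth making explicit in Part~(iii): state that $n_{N'}(t)$ is the $N'$-times solution with initial data $n_\infty^{0,(N')}$, so that the initial-data term in the bound from Part~(i) vanishes identically.
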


\begin{proof}
{\em First part. Uniform-in-time limit.} Fixed $K\in\mathbb{N}$. Note that the $L^1$ norm of a function can control the $L^1$ norm of its marginal. Precisely, for $K\leq N_1\leq N_2$, we have
\begin{equation*}
    \|n_{N_1}^{(K)}(t)-n_{N_2}^{(K)}(t)\|_{L^1(\Rp_K)}\leq \|n_{N_1}^{(N_1)}(t)-n_{N_2}^{(N_1)}(t)\|_{L^1(\Rp_{N_1})}=\|n_{N_1}(t)-n_{N_2}^{(N_1)}(t)\|_{L^1(\Rp_{N_1})},
\end{equation*} for each $t\geq0$. Thus we get
\begin{equation}\label{pf-tmp-step-1-uni-time}
    \sup_{t\in[0,+\infty)}\|n_{N_1}^{(K)}(t)-n_{N_2}^{(K)}(t)\|_{L^1(\Rp_{K})}\leq\sup_{t\in[0,+\infty)}\|n_{N_1}(t)-n_{N_2}^{(N_1)}(t)\|_{L^1(\Rp_{N_1})}.
\end{equation}Thanks to the uniform-in-time bound \eqref{eq:uni-time-1-3}, we obtain
\begin{equation*}
    \sup_{t\in[0,+\infty)}\|n_{N_1}(t)-n_{N_2}^{(N_1)}(t)\|_{L^1(\Rp_{N_1})}\leq \|n_{N_1}(0)-n_{N_2}^{(N_1)}(0)\|_{L^1(\Rp_{N_1})}+4\frac{t_{N_1}^*\eps_{N_1}}{\alpha_{N_1}},
\end{equation*} combining which with \eqref{pf-tmp-step-1-uni-time} we deduce for $K\leq N_1\leq N_2$
\begin{align}
    \sup_{t\in[0,+\infty)}\|n_{N_1}^{(K)}(t)-n_{N_2}^{(K)}(t)\|_{L^1(\Rp_{K})}&\leq \|n_{N_1}(0)-n_{N_2}^{(N_1)}(0)\|_{L^1(\Rp_{N_1})}+4\frac{t_{N_1}^*\eps_{N_1}}{\alpha_{N_1}}.\label{pf-tmp-step-1-uni-time-2}%&\leq \sup_{t\in[0,+\infty)}\|n_{N_1}(t)-n_{N_2}^{(N_1)}(t)\|_{L^1}\\
\end{align} 
As $N_1 \to \infty$,  the first term goes to zero due to the assumption on initial data \eqref{ID:Limit}, and the second term vanishes thanks to \eqref{uni-condition}, hence, \eqref{pf-tmp-step-1-uni-time-2} shows that for fixed $K$, $\{n_N^{(K)}(t)\}_{N}$ is a Cauchy sequence in $C_b\big([0,+\infty),L^1(\Rp_K)\big)$, which has a limit $n_{\infty}^{(K)}(t)$. As in  Theorem~\ref{th:uniqueness}, we can check that the limit $n_{\infty}^{(K)}$ is a consistent solution to the hierarchy problem~\eqref{eqInfinity}--\eqref{ErInfinity}.

Moreover, we can obtain a more concrete estimate. Fix $K= N_1=N$ in \eqref{pf-tmp-step-1-uni-time-2} and let $N_2$ goes to infinity, and we deduce
\begin{equation}\label{uni-time-limit-thm-statement-concrete}
     \sup_{t\in[0,+\infty)}\|n_{N}(t)-n_{\infty}^{(N)}(t)\|_{L^1(\Rp_{K})}\leq \|n_{N}(0)-n_{\infty}^{0,(N)}\|_{L^1(\Rp_{N})}+4\frac{t_{N}^*\eps_{N}}{\alpha_{N}},
\end{equation} 
and \eqref{uni-time-limit-thm-statement} follows  thanks to \eqref{ID:Limit} and \eqref{uni-condition},.
%Note that the right hand side of in \eqref{uni-time-limit-thm-statement-concrete} is independent of $K\leq N_1$. Take $K=N_1$ in \eqref{uni-time-limit-thm-statement-concrete} and let $N_1$ goes to infinity. Then the right hand side vanishes thanks to \eqref{ID:Limit} and \eqref{uni-condition}, and we obtain \eqref{uni-time-limit-thm-statement}. %which goes to zero as $N_1$ goes to infinity thanks to \eqref{ID:Limit} and \eqref{uni-condition}. 
% \xd{In \eqref{uni-time-limit-thm-statement-concrete} we can take $K=N_1$. If so, then we let $N_1$ goes to infinity which can strengthen the result in the statement to
% \begin{equation}
%                 \sup_{t\in[0,+\infty)}\|n_N-n_{\infty}^{(N)}\|_{L^1(\Rp^N)}\rightarrow0,\qquad \text{as $N\rightarrow\infty$},
% \end{equation}as Prof. Perthame said in our last discussion.} 

{\em Second part. Limit of the steady state.} 
The existence and uniqueness of $n^*_N$ are ensured by the Doeblin theorem. For $K\leq N_1\leq N_2$ we have immediately 
\begin{align}\label{step2-thm-uni-tmp-3}
    \|n_{N_1}^{*,(K)}-n_{N_2}^{*,(K)}\|_{L^1(\Rp_{K})}\leq \|n_{N_1}^{*}-n_{N_2}^{*,(N_1)}\|_{L^1(\Rp_{N_1})} \leq 2\frac{t_{N_1}^*\eps_{N_1}}{\alpha_{N_1}},
\end{align}
as in Corollary \ref{cor:dis-steady}, that is passing to the limit in \eqref{eq:uni-time-1}. 
%as follows\begin{equation}\label{step2-thm-uni-tmp-2}   \|n_{N_1}^{*}-n_{N_2}^{*,(N_1)}\|_{L^1(\Rp_{N_1})}\leq 2\frac{t_{N_1}^*\eps_{N_1,N_2}}{\alpha_{N_1}}\leq2\frac{t_{N_1}^*\eps_{N_1}}{\alpha_{N_1}}.\end{equation} Combining \eqref{step2-thm-uni-tmp} and \eqref{step2-thm-uni-tmp-2}, we get for $K\leq N_1\leq N_2$
%\begin{equation}\label{step2-thm-uni-tmp-3}
%\|n_{N_1}^{*,(K)}-n_{N_2}^{*,(K)}\|_{L^1(\Rp_{K})}\leq2\frac{t_{N_1}^*\eps_{N_1}}{\alpha_{N_1}}.\end{equation}

Thanks to \eqref{uni-condition}, \eqref{step2-thm-uni-tmp-3} implies that for fixed $K$, $\{n_{N}^{*,(K)}\}_{N}$ is a Cauchy sequence in $L^1(\Rp^K)$. Denote its limit as $n_{\infty}^{*,(K)}$. Then we fix $K,N_1$ and let $N_2$ goes to infinity in \eqref{step2-thm-uni-tmp-3} to derive a concrete error estimate
\begin{equation*}
    \|n_{N_1}^{*,(K)}-n_{\infty}^{*,(K)}\|_{L^1(\Rp_{K})}\leq2\frac{t_{N_1}^*\eps_{N_1}}{\alpha_{N_1}},
\end{equation*}which goes to zero as $N_1$ goes to infinity by \eqref{uni-condition}. Taking $K=N_1$ we deduce \eqref{uni-steady-statement}. %\xd{The equation in current statement is the case $N_1=K$}

Similar to Theorem \ref{th:uniqueness}, we can see that $n_{\infty}^{*,(K)}$ is a steady solution to the infinite-times hierarchy problem via passing the limit in the weak formulation. 

{\em Third part. Long term behavior for the infinite-times equation.} 
For a solution to the infinite problem $n_{\infty}(t)$, we can construct the solution $n_N(t)$ to the $N$ problem  with initial data $n_\infty^{0,(N)}$. 
%Then clearly \eqref{ID:Limit} is satisfied. 
%Fix $K\in\mathbb{N}$, $\eps>0$. By part 1 there exists $N_1\in\mathbb{N}$ such that
%\begin{equation}\label{tmp-1-ltb-uni}    \sup_{t\in[0,+\infty)}\|n_{\infty}^{(N)}(t)-n_{N}(t)\|_{L^1(\Rp^N)}<\eps,\qquad \forall N\geq N_1. \end{equation} And by part 2, there exists $N_2\in\mathbb{N}$ such that
%\begin{equation}\label{tmp-2-ltb-uni}  \|n_{N}^*-n_{\infty}^{*,(N)}\|_{L^1(\Rp^N)}<\eps,\qquad \forall N\geq N_2.\end{equation} 
%Now choose $N_3=\max(N_1,N_2,K)$. 
Using the triangle inequality, we have for $N\geq K$, 
\begin{align*}
    \|n_{\infty}^{(K)}(t)-&n_{\infty}^{*,(K)}\|_{L^1(\Rp_{K})}
    \leq\|n_{\infty}^{(N)}(t)-n_{\infty}^{*,(N)}\|_{L^1(\Rp_{N})}
    \\&\leq \|n_{\infty}^{(N)}(t)-n_{N}(t)\|_{L^1(\Rp_{N})}+\|n_{N}(t)-n_{N}^*\|_{L^1(\Rp_{N})}+\|n_{N}^*-n_{\infty}^{*,(N)}\|_{L^1(\Rp_{N})}
    \\&\leq  4\frac{t_{N}^*\eps_{N}}{\alpha_{N}} +\|n_{N}(t)-n_{N}^*\|_{L^1(\Rp_{N})}+  2\frac{t_{N}^*\eps_{N}}{\alpha_{N}},
\end{align*}
where we have used \eqref{uni-time-limit-thm-statement-concrete} and  \eqref{uni-steady-statement}. For $N$ large enough, the two error terms can be made as small as we wish using Assumption~\eqref{uni-condition} .
%\begin{align*}    \|n_{\infty}^{(K)}(t)-n_{\infty}^{*,(K)}\|_{L^1(\Rp_{K})}&\leq\|n_{\infty}^{(N_3)}(t)-n_{\infty}^{*,(N_3)}\|_{L^1(\Rp^{N_3})}\\&\leq \|n_{\infty}^{(N_3)}(t)-n_{N_3}(t)\|_{L^1(\Rp^{N_3})}+\|n_{N_3}(t)-n_{N_3}^*\|_{L^1(\Rp^{N_3})}+\|n_{N_3}^*-n_{\infty}^{*,(N_3)}\|_{L^1(\Rp^{N_3})}\\&<2\epsilon+\|n_{N_3}(t)-n_{N_3}^*\|_{L^1(\Rp^{N_3})},\end{align*}
%where \eqref{tmp-1-ltb-uni} and \eqref{tmp-2-ltb-uni} are used. 
Finally, applying the Doeblin theorem to the $N$ problem, we know that $\|n_{N}(t)-n_{N}^*\|_{L^1(\Rp_{N})}\to 0$ for all $t$ large enough. Therefore the proof is completed.

{\em Fourth part.} In part (iii) we have just established that every solution converges to $n^*_{\infty}$ in the long time. Hence, $n^*_{\infty}$ is the unique steady state for the infinite-times problem.
\end{proof}

%
%-------------------------------------------------------------------
\subsection{Towards a uniform Doeblin condition}
\label{subsec:strong_condition}

Theorem \ref{thm:uni-n-time} differs from Theorem \ref{thm:uni-n-time-concrete} in its abstract Doeblin assumption \eqref{uni-condition}. We now give a concrete conditions to ensure \eqref{uni-condition}.

\begin{proposition}\label{prop:bound-doeblin-rate}
Assume \eqref{as:RR2}, then we can choose $t^*_N>0$ and $\alpha_N\in(0,1)$ such that the  condition \eqref{def-doeblin-cond-sec4} holds with
\begin{equation}\label{doeblin-constant-bound}
    \frac{\alpha_N}{t^*_N}\geq \frac{a_-}{2N}\left(\frac{a_-}{a_+}\right)^{N-1}.
\end{equation}
\end{proposition}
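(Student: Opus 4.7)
The plan is to start from the explicit formula for $\alpha_N(t^*)$ in \eqref{LTB:numbers}, reinterpret the integral as an incomplete gamma function, and then choose $t^*_N$ as the mean of the corresponding Gamma distribution.

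First, I would perform the change of variables $u = a_+ s_N$ in the definition of $\alpha_N(t^*)$, which yields the clean identity
\begin{equation*}
\alpha_N(t^*) \;=\; \left(\frac{a_-}{a_+}\right)^{\!N} \frac{\gamma(N,\,a_+ t^*)}{\Gamma(N)},
\end{equation*}
where $\gamma(N,x)=\int_0^x u^{N-1}e^{-u}\,du$ is the lower incomplete gamma function. Recall that $\gamma(N,x)/\Gamma(N)=\mathbb{P}(X\leq x)$ for $X\sim \Gamma(N,1)$, whose mean equals $N$.

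Second, I would select $t^*_N := N/a_+$, so that $a_+ t^*_N = N$ matches the mean of $\Gamma(N,1)$. The only analytic input I need is the classical half-median inequality
\begin{equation*}
\frac{\gamma(N,N)}{\Gamma(N)} \;\geq\; \frac{1}{2} \qquad \text{for all integers } N\geq 1,
\end{equation*}
which says the median of $\Gamma(N,1)$ lies below its mean (equivalently, via the Poisson--gamma duality, $\mathbb{P}(\mathrm{Poisson}(N)\leq N-1)\leq 1/2$, a well-known fact on Poisson medians for integer rates). This can be cited, or proved in a few lines through integration by parts and an induction.

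Third, combining the two displays gives $\alpha_N(t^*_N)\geq \tfrac{1}{2}(a_-/a_+)^{N}$, and therefore
\begin{equation*}
\frac{\alpha_N(t^*_N)}{t^*_N} \;\geq\; \frac{a_+}{N}\cdot \frac{1}{2}\left(\frac{a_-}{a_+}\right)^{\!N} \;=\; \frac{a_-}{2N}\left(\frac{a_-}{a_+}\right)^{\!N-1},
\end{equation*}
which is exactly the bound \eqref{doeblin-constant-bound}. The main (and only) obstacle is justifying the half-median bound on $\gamma(N,N)/\Gamma(N)$; apart from that, the argument is an explicit substitution. Note that choosing $t^*_N$ larger, e.g. $t^*_N = N/a_-$, would not help since the prefactor $(a_-/a_+)^N$ is already the dominant $N$-dependence, and the gain in probability is bounded by $1$; picking $t^*_N$ exactly at the mean is close to optimal.
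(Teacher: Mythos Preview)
Your proposal is correct and follows essentially the same approach as the paper's proof: the same change of variable $u=a_+ s_N$, the same choice $t^*_N=N/a_+$, and the same key inequality $\gamma(N,N)/\Gamma(N)\geq 1/2$ (which the paper cites from the incomplete-gamma literature rather than framing probabilistically). The only difference is presentational---you phrase things via the Gamma$(N,1)$ median, the paper keeps the raw integral---but the mathematics is identical.
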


Combining Theorem \ref{thm:uni-n-time} with Proposition \ref{prop:bound-doeblin-rate}, we get Theorem \ref{thm:uni-n-time-concrete}.
%----------------------------------
\begin{proof}[Proof of Theorem \ref{thm:uni-n-time-concrete}]
Choose $\alpha_N$ and $t^*_N$ satisfying \eqref{doeblin-constant-bound} by Proposition \ref{prop:bound-doeblin-rate}. Then thanks to Assumption~\eqref{cond-epsN} on the decay of~$\eps_N$, the condition \eqref{uni-condition} holds. Hence, we can apply Theorem \ref{thm:uni-n-time} to derive all the conclusions of Theorem ~\ref{thm:uni-n-time-concrete}.
\end{proof}

\begin{proof}[Proof of Proposition \ref{prop:bound-doeblin-rate}]

By Theorem \ref{thm:ststN}, Equation \eqref{control_alpha} and Definition \eqref{LTB:numbers}, for any $t^*>0$, we can choose $\alpha=\alpha_N(t^*)>0$ satisfying the Doeblin condition \eqref{def-doeblin-cond-sec4} with
\begin{equation}\label{doeblin-constant-n-recall}
   \frac{\alpha_N(t^*)}{t^*}=\frac{(a_-)^N}{t^*}\int_0^{t^*}\frac{1}{(N-1)!}s^{N-1}e^{-sa_+}ds.
\end{equation} 

Now we want to find a suitable $t^*$ such that the right hand side of \eqref{doeblin-constant-n-recall} is relatively large and easy to compute. To do so, we rewrite \eqref{doeblin-constant-n-recall} via a change of variable in the integral
\begin{align*}
    \frac{\alpha_N(t^*)}{t^*}
    &=\frac{(a_-)^N}{a_+t^*}\frac{1}{(a_+)^{N-1}}\int_0^{t^*}\frac{1}{(N-1)!}(a_+s)^{N-1}e^{-sa_+}d(a_+s)\\
    &=a_-\left(\frac{a_-}{a_+}\right)^{N-1}\frac{1}{\tau}\int_0^{\tau}\frac{1}{(N-1)!}\tilde{s}^{N-1}e^{-\tilde{s}}d\tilde{s},
\end{align*} 
with $\tau=a_+t^*$. Then we choose $t^*_N=N/a_+$, i.e., $\tau=N$, to get
\begin{align*}
    \frac{\alpha_N(t^*_N)}{t^*_N}&=a_-\left(\frac{a_-}{a_+}\right)^{N-1}\frac{1}{N}\int_0^{N}\frac{1}{(N-1)!}\tilde{s}^{N-1}e^{-\tilde{s}}d\tilde{s}\geq  \frac{a_-}{2N}\left(\frac{a_-}{a_+}\right)^{N-1},
\end{align*} where we use the following inequality for incomplete gamma functions (c.f. survey \cite[Section 5.1, Equation (5.6)]{gautschi1998incomplete} and  \cite{vietoris1983dritter})
\begin{equation*}\label{gamma-inequality}
    \int_0^{N}\frac{1}{(N-1)!}\tilde{s}^{N-1}e^{-\tilde{s}}d\tilde{s}\geq \frac{1}{2}.
\end{equation*}
With these choices of $t^*_N$ and $\alpha_N(t^*_N)$, Proposition \ref{prop:bound-doeblin-rate} is proved.
\end{proof}%, which said this was proved in a German paper \cite{vietoris1983dritter}

The quantity $\alpha_N/t^*_N$ is closely related to the exponential convergence rate $\lambda_N$ of the $N$-times model given in Theorem \ref{thm:ststN}. Indeed, by \eqref{LTB:numbers}, we have for $\alpha_N$ small
\begin{equation*}
    \lambda_N=-\frac{\ln(1-\alpha_N)}{t^*_N}\approx \frac{\alpha_N}{t^*_N}.
\end{equation*}
With this in mind, we may interpret \eqref{uni-condition} as using the fast decay of $\eps_N$ to compensate the degeneracy of $\lambda_N$. And Proposition \ref{prop:bound-doeblin-rate} gives some hints on how $\lambda_N$ deteriorates.

Such a degeneracy is essential for the convergence rate in the strong norm. Intuitively, for the $N$-times model, one needs to spike for at least $N$ times to forget the initial dependence, which explains the factor $\frac{1}{N}$ in \eqref{doeblin-constant-bound}. The other factor $\left(\frac{a_-}{a_+}\right)^{N-1}$, which dominates as it is exponential, might look less straightforward, which may reflect the effect of the heterogeneity of the renewal rate. 

\section{Measure solution on $\Rp_\infty$ as the weak limit $N\rightarrow +\infty$}
\label{sec:MeasureSol}
%---------------------------------------------------------------

So far, the infinite-times renewal equation is understood as the hierarchy~\eqref{eqInfinity}, and well-posedness is established in Theorem \ref{th:uniqueness} for $L^1$ initial data. We now generalize the notion of solution to measures, define a notion of weak formulation and prove its well-posedness. 

Recall that the hierarchy solution studied in Section \ref{Sec:hierarchy} satisfies the \textit{consistency condition},  for each $t\geq0$,
\begin{equation*}
n_{\infty}^{(K)}(t,[s]_K)=\int_{s_K}^\infty n_{\infty}^{(K+1)}(t,[s]_{K+1})ds_{K+1}.
\end{equation*}
Then, the Kolmogorov extension theorem, see Appendix~\ref{ap:K}, allows us to build an infinite-dimensional measure $n_{\infty}(t)\in \mathcal{P}(\mathcal{C}_{\infty})$ for each $t\geq0$, such that for each $K$ the measure $n_\infty^{(K)}(t)$ is exactly the $K$ marginal of $n_{\infty}(t)$ on $\mathcal{C}_{K}$. In this way, we give a precise meaning to the infinite-dimensional object $n_{\infty}(t)$. As far as topology is concerned, we use the following weak topology.  
%----------------------------------
\begin{definition}[Weak topology and weakly continuous function valued in $\mathcal{M}(\Rp_\infty)$]\label{def:weak_continuity_infinite}
\ \\
(i) Given a sequence of elements $\{f_{j}\}_j$ in $\mathcal{M}(\Rp_\infty)$, we say it weakly converges to an element $f\in \mathcal{M}(\Rp_\infty)$, if for each $K\geq 1$, the $K$ marginals $\{f_j^{(K)}\}_j$ weakly converges to $f^{(K)}$ in $\mathcal{M}(\Rp_K)$.\\
(ii) Based on this weak topology, we say that $f(t,[s]_{\infty})$ is an element in $C_w\big([0,+\infty);\mathcal{M}(\Rp_\infty)\big)$ if for each $K\geq 1$ we have $f^{(K)}\in C_w\big([0,+\infty);\mathcal{M}(\Rp_K)\big)$.  
\end{definition}

We then define a notion of weak measure solution (see Definition \ref{def:MSI}) for the infinite-times renewal equation \eqref{eq:infinite}, and $n_{\infty}(t)$ defined above is a weak measure solution. It is equivalent to the solution of the Hierarchy system built in Section~\ref{Sec:hierarchy}, see Lemma~\ref{lemma:equivalence}. Such a definition offers another viewpoint to treat the hierarchy system as a unified object in infinite dimension, which facilitates the convergence proof in the Monge-Kantorovich distance, see  Section~\ref{sec:infinite_MK_convergence}.

\subsection{Weak solution of infinite-times renewal equation on $\Rp_\infty$} 
\label{sec:WFM}  
We first recall the definition of weak solutions for hierarchy model \eqref{eqInfinity}--\eqref{ErInfinity}.
%%%%%%%%%%%%%%%%%
\begin{definition}[Weak hierarchy solution] \label{def:hierarchy_weak_solution}
Given $n_{\infty}^{(K)}(t)\in C_w\big([0,+\infty);\mathcal{P}(\Rp_K)\big)$ for all $K\geq 1$, a consistent family,  $\{n_{\infty}^{(K)}\}_K$ is a weak solution of Equation~\eqref{eqInfinity}, if for all positive $T>0$, positive integer $K$ and all test functions $\psi\in C_b^{1}\big([0,T]\times\Rp_K\big)$, we have,
\begin{align}
-\int_0^T \! \! \int_{\Rp_K}
& n_{\infty}^{(K)}(t,d[s]_{K})\Big(\p_t\psi(t,[s]_{K})+\sum_{i=1}^{K}\p_{s_i}\psi(t,[s]_{K})+p_K([s]_K)\big(\psi(\tau[s]_K)-\psi([s]_K)\big)\Big)dt \notag\\
=&\int_0^T \! \! \int_{\Rp_K}E_{\infty}^{(K)}(t,d[s]_K)\big(\psi(t,\tau[s]_K)-\psi(t,[s]_K)\big)d[s]_Kdt
\label{eq:hierarchy_solution_1} \\
 &+ \int_{\Rp_K}\psi(0,[s]_K)n_{\infty}^{(K)}(0,d[s]_K)-\int_{\Rp_K}\psi(T,[s]_K)n_{\infty}^{(K)}(T,d[s]_K). \notag
\end{align}
\end{definition}

Recalling Definition \ref{ErInfinity}, the term $E_{\infty}^{(K)}$ is now written,
\begin{equation*}%\label{eq:error_replace}
E_{\infty}^{(K)}(t,d[s]_K)=\sum_{i=K+1}^{+\infty}\int_{s_K\leq s_{K+1}\leq ...\leq s_i}n_{\infty}^{(i)}(t,[s]_K,d[s]_{K+1,i})\varphi_i([s]_i),
\end{equation*}
and by Assumption~\eqref{as:RR1}, we have the following decomposition,
\begin{equation*}
p_K([s]_K)+\sum_{i=K+1}^{\infty}\varphi_i([s]_i)=p_{\infty}([s]_{\infty}).
\end{equation*}
Using these relations, we can write Equation~\eqref{eq:hierarchy_solution_1} in terms of $n_\infty$. This leads us to define the following equivalent notion of weak solution in $C_w\big([0,+\infty);\mathcal{M}(\mathcal{C}_{\infty})\big)$, where we can also define analogously the weak solution of finite-times equation with measure data.
%-----------------------
\begin{definition}[Weak solution of the infinite-times renewal equation]
\label{def:MSI} 
Given $n_{\infty}(0)\in \mathcal{P}(\mathcal{C}_{\infty})$, an element $n_{\infty}\in C_w\big([0,+\infty);\mathcal{P}(\Rp_\infty)\big)$ is a weak solution of the infinite-times renewal Equation~\eqref{eq:infinite}, if for arbitrary $T\geq 0$, integer $K\geq 1$ and arbitrary test function $\psi\in C_b^1\big([0,T]\times \Rp_K\big)$ we have,
\begin{equation}\label{eq:measure-solution}
\begin{split}
 -\int_0^T \! \! \int_{\Rp_\infty} &n_{\infty}(t,d[s]_{\infty})\Big(\p_t\psi(t,[s]_{K})+\sum_{i=1}^{K}\p_{s_i}\psi(t,[s]_{K})+p_{\infty}([s]_\infty)\big(\psi(t,\tau[s]_K)-\psi(t,[s]_K)\big)\Big)dt\\
&=\int_{\Rp_\infty}\psi(0,[s]_K)n_{\infty}(0,d[s]_\infty)-\int_{\Rp_\infty}\psi(T,[s]_K)n_{\infty}(T,d[s]_\infty) .
\end{split}
\end{equation}
\end{definition}
%----------------------------------------------
 
%---------------------------
\begin{lemma}[{Equivalence Lemma}]\label{lemma:equivalence}
Assume the renewal rate satisfies \eqref{as:RR1}. Let $n_{\infty}\in C_w\big([0,+\infty);\mathcal{M}(\Rp_\infty)\big)$ a weak solution of the infinite-times renewal Equation~\eqref{eq:infinite}, then the family of marginals $\{n_{\infty}^{(K)}\}_K$ is a weak solution of the hierarchy model~\eqref{eqInfinity}; Let $\{n_{\infty}^{(K)}\}_{K}$ a weak solution of the hierarchy model, then its Kolmogorov extension $n_{\infty}$ is a weak solution of the infinite-times renewal equation. 
\end{lemma}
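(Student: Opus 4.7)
The plan is to exploit the cylindrical nature of the test functions in Definition~\ref{def:MSI}: every admissible $\psi \in C_b^1([0,T] \times \Rp_K)$ depends only on the first $K$ coordinates, so integrating against $n_\infty$ reduces, by definition of the $K$-marginal, to integrating against $n_\infty^{(K)}$. Combined with the splitting
\begin{equation*}
p_\infty([s]_\infty) = p_K([s]_K) + \sum_{i=K+1}^{\infty}\varphi_i([s]_i)
\end{equation*}
afforded by Assumption~\eqref{as:RR1}, both directions of the equivalence will follow by a direct manipulation of the weak formulations.

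For the direction ``measure solution $\Rightarrow$ hierarchy'', I would fix $K$ and a test $\psi \in C_b^1([0,T] \times \Rp_K)$, insert $\psi$ (viewed as a cylinder function on $\Rp_\infty$) into \eqref{eq:measure-solution}, and reduce every term to an integral against $n_\infty^{(K)}$ using the marginal relation, except the renewal term. For that term I would use the above splitting: the $p_K$ part contributes the corresponding piece on the left-hand side of \eqref{eq:hierarchy_solution_1}, while each $\varphi_i$ with $i \geq K+1$ gives $\int \varphi_i([s]_i)\bigl(\psi(\tau[s]_K)-\psi([s]_K)\bigr) n_\infty^{(i)}(d[s]_i)$; since $\psi(\tau[s]_K)-\psi([s]_K)$ depends only on $[s]_K$, Fubini allows me to integrate $\varphi_i$ against $n_\infty^{(i)}$ over the variables $[s]_{K+1,i}$ first, which reproduces exactly the $i$-th summand of $E_\infty^{(K)}$ from \eqref{ErInfinity}. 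Summing over $i>K$ and moving this contribution to the right-hand side yields \eqref{eq:hierarchy_solution_1}, and the consistency of $\{n_\infty^{(K)}\}_K$ is automatic because these are the marginals of a single measure on $\Rp_\infty$.

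For the converse direction, I would first invoke the Kolmogorov extension theorem (Appendix~\ref{ap:K}) applied to the consistent family $\{n_\infty^{(K)}(t)\}_K$ (Definition~\ref{def:consistent}) to produce, for each $t\geq 0$, a unique $n_\infty(t)\in\mathcal{P}(\Rp_\infty)$ with the prescribed marginals; the weak continuity $n_\infty\in C_w\big([0,+\infty);\mathcal{P}(\Rp_\infty)\big)$ then follows from Definition~\ref{def:weak_continuity_infinite}(ii). Given a cylinder test $\psi \in C_b^1([0,T]\times\Rp_K)$, I would run the computation above in reverse: start from \eqref{eq:hierarchy_solution_1}, rewrite every integral against $n_\infty^{(K)}$ as one against $n_\infty$ via the marginal relation, and merge the $p_K$-term on the left with the $E_\infty^{(K)}$-term on the right by undoing the splitting of $p_\infty$, thereby recovering \eqref{eq:measure-solution}.

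The main obstacle is purely bookkeeping, namely justifying the exchange of the infinite sum $\sum_{i>K}$ with the time integral and with the integration against the measures $n_\infty^{(i)}$. This is routine: $\sum_{i}\|\varphi_i\|_\infty \leq a_+ < \infty$ by \eqref{as:RR1}, $\|\psi\|_\infty < \infty$ by assumption, and each $n_\infty^{(i)}(t)$ is a probability measure, so Fubini--Tonelli yields absolute convergence uniformly on $[0,T]$ and legitimizes every rearrangement.
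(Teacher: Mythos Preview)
Your proposal is correct and is precisely the approach the paper has in mind: the paper's proof consists of a single sentence asserting that the lemma is ``an obvious consequence of the equivalence between Equations~\eqref{eq:hierarchy_solution_1} and~\eqref{eq:measure-solution},'' and your argument---reducing cylinder-function integrals to marginals and using the splitting $p_\infty = p_K + \sum_{i>K}\varphi_i$ from~\eqref{as:RR1} to identify the $E_\infty^{(K)}$ term---is exactly the computation that makes this equivalence explicit. Your treatment of the Fubini justification via $\sum_i\|\varphi_i\|_\infty\le a_+$ is the right way to handle the only nontrivial point.
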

This lemma is an obvious consequence of the equivalence between Equations \eqref{eq:hierarchy_solution_1} and \eqref{eq:measure-solution} in the definitions of weak solutions.

For the hierarchy solution with $L^1$ initial data, we have obtained well-posedness via the limit $N\rightarrow\infty$ from $N$-times model in Theorem~\ref{th:uniqueness}. This result can be readily extended to the case with measure initial data. Based on the equivalence above, this also means the infinite-times renewal equation~\eqref{eq:infinite} has a unique weak solution in $C_w\big([0,+\infty);\mathcal{P}(\Rp_\infty)\big)$, treated as an infinite dimensional measure as in Definition \ref{def:MSI}.

\begin{theorem}[Well-posedness of the measure solution]\label{th:wellposed_hierarchy}
Assume \eqref{as:RR1} and consider a consistent hierarchy of initial distributions $\{n_{\infty}^{0,(K)}\}_{K}$, where, for all $K$,  $\Big(n_{\infty}^{0,(K+1)}\Big)^{(K)}=n_{\infty}^{0,(K)}\in\mathcal{P}(\mathcal{C}_K)$. Then, we have
\\
(i) The hierarchy system \eqref{eqInfinity}-\eqref{ErInfinity} has a unique weak solution $\{n_{\infty}^{(K)}(t)\}_K$ in the sense of Definition~\ref{def:hierarchy_weak_solution}.
\\
(ii) Equivalently, the infinite-times equation \eqref{eq:infinite} has a unique weak solution $n_{\infty}(t)$ in the sense of Definition~\ref{def:MSI}. Here the initial data $n_{\infty}^0\in\mathcal{P}(\Rp_{\infty})$ is the Kolomorgov extension of $\{n_{\infty}^{0,(K)}\}_{K}$.
\\
(iii) Let $n_N(t)$ be the solution of the $N$-times equation, with renewal rate $p_{N}$ as \eqref{as:RR1} and initial data $n_{\infty}^{0,(N)}\in\mathcal{P}(\Rp_N)$. Then for every marginal we have local-in-time strong convergence. More precisely, for every fixed $T>0$ and $K\in\mathbb{N}^+$, we have, we the notations of Section~\ref{sec:notations},
\begin{equation*}
    \sup_{0\leq t\leq T}\|n_N^{(K)}(t)-n_{\infty}^{(K)}(t)\|_{\mathcal{M}^1(\Rp_K)}\rightarrow 0,\qquad\qquad\text{as $N\rightarrow\infty$.}
\end{equation*}
\end{theorem}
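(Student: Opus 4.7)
My strategy is to reduce part (i) to the hierarchy convergence already established in Theorem~\ref{th:uniqueness}, using the consistency of the initial data to kill the initial error in the Cauchy estimate \eqref{hm:bound-marginal}. Part (ii) will follow from Lemma~\ref{lemma:equivalence} (and the Kolmogorov extension theorem, which applies because we propagate consistency), and part (iii) will be immediate from the Cauchy estimate itself.

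\textbf{Extension of the $N$-times theory to measure data.} The $N$-times theory in Section~\ref{Ntimes} is formulated for $L^1$ initial data, so a preliminary step is to make sense of $n_N(t)$ when the initial datum $n_\infty^{0,(N)}$ is only a probability measure. I would approximate $n_\infty^{0,(N)}$ by a sequence of $L^1$ densities supported in $\mathcal{C}_N$ (e.g.\ via mollification respecting the order $s_1\leq\ldots\leq s_N$). The corresponding $L^1$ solutions $n_{N,\eta}(t)$ form a Cauchy sequence in $C([0,T];L^1(\Rp_N))$ by the contraction property underlying \eqref{diff-m} applied with $E\equiv 0$, so they converge to a unique $n_N(t)\in C_w([0,+\infty);\mathcal{P}(\Rp_N))$ satisfying \eqref{eq:measure-solution-finite}. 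All the estimates of Section~\ref{Ntimes}, in particular \eqref{hm:bound-marginal}, pass to the limit with the $L^1$ norm replaced by the total variation norm $\|\cdot\|_{\mathcal{M}^1}$.

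\textbf{Cauchy property and construction of the hierarchy solution.} Because the initial data are consistent, for $K\leq N_1\leq N_2$ one has $n_{N_2}^{(N_1)}(0)=n_{N_1}(0)=n_\infty^{0,(N_1)}$, so the extension of \eqref{hm:bound-marginal} reads
\begin{equation*}
\|n_{N_2}^{(K)}(t)-n_{N_1}^{(K)}(t)\|_{\mathcal{M}^1(\Rp_K)}\leq \|n_{N_2}^{(N_1)}(t)-n_{N_1}(t)\|_{\mathcal{M}^1(\Rp_{N_1})}\leq 2\eps_{N_1,N_2}\,t.
\end{equation*}
By Assumption \eqref{as:RR1}, $\eps_{N_1,N_2}\to 0$ as $N_1\to\infty$, hence $\{n_N^{(K)}\}_N$ is Cauchy in $C([0,T];\mathcal{M}(\Rp_K))$ for every $K$ and $T$. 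Denote its limit by $n_\infty^{(K)}(t)$. Consistency of the family $\{n_\infty^{(K)}\}_K$ is preserved under this total variation limit, and each $n_\infty^{(K)}(t)\in\mathcal{P}(\Rp_K)$ is supported in $\mathcal{C}_K$. Letting $N_2\to\infty$ in the inequality above with $N_1=N$, $K=N$, we obtain the convergence claim of (iii):
\begin{equation*}
\sup_{0\leq t\leq T}\|n_N^{(K)}(t)-n_\infty^{(K)}(t)\|_{\mathcal{M}^1(\Rp_K)}\leq 2\eps_N\,T\longrightarrow 0.
\end{equation*}

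\textbf{Passing to the limit in the weak formulation and uniqueness.} Writing \eqref{eq:measure-solution-finite} for $n_N$ tested against $\psi\in C_b^1([0,T]\times\Rp_K)$ (viewed as depending on $[s]_K$ only), the transport and time-derivative terms pass to the limit by the strong convergence in $\mathcal{M}^1$; the renewal term splits using $p_N=p_K+\sum_{i=K+1}^N\varphi_i$ and Assumption~\eqref{as:RR1} ensures the tail contributes the coupling $E_\infty^{(K)}$ defined in \eqref{ErInfinity} (with the convergence $E_N^{(K)}\to E_\infty^{(K)}$ argued as in the second step of the proof of Theorem~\ref{th:uniqueness}, now in $\mathcal{M}^1$). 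This shows $\{n_\infty^{(K)}\}_K$ satisfies Definition~\ref{def:hierarchy_weak_solution}. For uniqueness, given two weak hierarchy solutions with the same initial data, the difference $m^{(K)}=n_\infty^{(K)}-\tilde n_\infty^{(K)}$ satisfies the same hierarchy with zero initial data and total mass bounded by $2$; the third step of the proof of Theorem~\ref{th:uniqueness}, rewritten in terms of total variation by using $|\psi|\leq 1$ duality instead of $|m|$, yields $\|m^{(J)}(t)\|_{\mathcal{M}^1}\leq 4\eps_K t$ for every $J\leq K$, and letting $K\to\infty$ gives $m^{(J)}\equiv 0$. Part (ii) then follows from Lemma~\ref{lemma:equivalence}: the Kolmogorov extension of $\{n_\infty^{(K)}(t)\}_K$ (at each $t$, the consistency condition of Definition~\ref{def:consistent} is verified) produces the unique $n_\infty(t)\in C_w([0,+\infty);\mathcal{P}(\Rp_\infty))$ satisfying Definition~\ref{def:MSI}.

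\textbf{Main obstacle.} The only genuinely new point compared to Theorem~\ref{th:uniqueness} is the extension of the $N$-times well-posedness to probability-measure initial data, which is needed in order to even define $n_N(t)$ in (iii). This is routine via regularization and $L^1$-contraction, but requires care to check that all bounds used later—the duality estimate underlying \eqref{hm:bound-marginal}, mass conservation, support in $\mathcal{C}_N$, and the tightness of Lemma~\ref{lemma:tightness}—survive the limit in the TV norm. Once this is in place, the rest of the argument is essentially a transcription of the proof of Theorem~\ref{th:uniqueness} with $L^1$ replaced by $\mathcal{M}^1$.
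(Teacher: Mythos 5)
Your proposal is correct and follows exactly the route the paper takes: the paper's own proof of this theorem is a one-line remark that parts (i) and (iii) are proved identically to Theorem~\ref{th:uniqueness} with $L^1$ replaced by the total variation norm, and that part (ii) follows from Lemma~\ref{lemma:equivalence}. Your preliminary regularization step to extend the $N$-times well-posedness to probability-measure initial data is a useful elaboration of a point the paper leaves implicit, but it does not change the underlying strategy.
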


The proof for part (i) and (iii) of Theorem~\ref{th:wellposed_hierarchy} is identical to that of Theorem~\ref{th:uniqueness}, except that the $L^1$ norm is replaced by the total variation norm for measures. Part (ii) follows from the equivalence in Lemma~\ref{lemma:equivalence}. 

%--------------------------------------------
\subsection{Weak limit $N\to \infty$ via tightness}%Limit $N\to \infty$ for weak measure solutions
%--------------------------------------------

A more direct proof of the limit $N\to \infty$ in the finite-times equation can be obtained by the method of 
tightness, which can also be used for other purposes (e.g. Proposition \ref{prop:stationary-infinite}). Here the limit holds  in the weak sense of measures, which is natural in the measure setting. With correct assumptions on the initial data, this result is a consequence of the strong convergence for marginals in Theorem~\ref{th:wellposed_hierarchy}. 

%--------------
\begin{theorem}[Weak Limit]\label{th:weak_limit_of_finite}
Assume \eqref{as:RR1} and \eqref{as:RR2} and give an initial data $n_{\infty}(0)\in\mathcal{P}(\mathcal{C}_\infty)$ such that
\begin{equation}\label{initial-tight}
    \int_{\Rp_{\infty}} \sum_{i=1}^{+\infty}\frac{|s_i|}{2^i}\; n_{\infty}(0,d[s]_{\infty})<+\infty.
\end{equation}
Let $n_{\infty}(t)$ be the unique measure solution to the infinite-times equation in the sense of Definition \ref{def:MSI}. Let $n_{N}(t)$ be the corresponding solutions to the $N$-times equation with initial data $n_{\infty}^{(N)}(0)\in\mathcal{P}(\mathcal{C}_N)$. Then we have the following weak limits as $N\rightarrow\infty$.

(i) For all $t\geq 0$, integer $K\geq 1$ and test function $\phi\in C_b^0(\mathcal{C}_K)$,
\begin{equation}\label{eq:weak_convergence_lem_2}
\int_{\mathcal{C}_N}n_N(t,d[s]_N)\phi([s]_K)\rightarrow \int_{\mathcal{C}_\infty}n_{\infty}(t,d[s]_{\infty})\phi([s]_K),\qquad \text{as $N\rightarrow +\infty$}.
\end{equation}

(ii) For all $T\geq 0$, integer $K \geq 1$ and test function $\psi\in C_b^0\big([0,T]\times\mathcal{C}_K\big)$,
\begin{equation}\label{eq:weak_convergence_lem}
\int_0^T \! \! \int_{\Rp_N} n_{N}(t,d[s]_{N})\psi(t,[s]_{K})dt\rightarrow \int_0^T \! \! \int_{\Rp_\infty} n_{\infty}(t,d[s]_{\infty})\psi(t,[s]_{K})dt,\qquad \text{as $N\rightarrow +\infty$.}
\end{equation}
\end{theorem}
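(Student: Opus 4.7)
My plan is to reduce both statements to the strong marginal convergence already established in Theorem \ref{th:wellposed_hierarchy}(iii), exploiting the fact that the test functions depend on only finitely many coordinates. For Part (i), fix $K$, $t$, and $\phi \in C_b^0(\mathcal{C}_K)$. Since $\phi$ does not involve $s_{K+1}, s_{K+2}, \ldots$, the marginal definition \eqref{Marginales} rewrites both sides of \eqref{eq:weak_convergence_lem_2} as integrals on $\mathcal{C}_K$ against $n_N^{(K)}(t)$ and $n_\infty^{(K)}(t)$ respectively. The family $\{n_\infty^{0,(K)}\}_K$ is consistent (being the marginals of $n_\infty(0) \in \mathcal{P}(\mathcal{C}_\infty)$), so Theorem \ref{th:wellposed_hierarchy}(iii) applies and gives $\|n_N^{(K)}(t) - n_\infty^{(K)}(t)\|_{\mathcal{M}^1(\mathcal{C}_K)} \to 0$ uniformly on compact time intervals. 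Pairing this total-variation convergence with the bounded continuous test function $\phi$ immediately yields (i).

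For Part (ii), I would combine Part (i) with Fubini and dominated convergence on $[0,T]$. After the same reduction, the integrand becomes $F_N(t) := \int_{\mathcal{C}_K} \psi(t, [s]_K) \, n_N^{(K)}(t, d[s]_K)$, which converges pointwise in $t$ to its infinite-times analogue $F_\infty(t)$ by Part (i) applied to the slice $\psi(t, \cdot)$, and is bounded by $\|\psi\|_\infty$ uniformly in $N$ and $t$. Dominated convergence on the bounded interval $[0,T]$ then gives \eqref{eq:weak_convergence_lem}.

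The authors also suggest a more direct route via tightness, which would be the natural strategy if Theorem \ref{th:wellposed_hierarchy} were not available and which is likely the conceptually interesting content of this statement. The main obstacle there is the identification step. Lemma \ref{lemma:tightness} together with \eqref{initial-tight} (using $\sigma_N \leq \sigma_\infty$ to control the initial moment uniformly in $N$) yields a uniform-in-$N$ first-moment bound on each marginal $n_N^{(K)}(t)$, hence tightness on $\mathcal{C}_K$. A diagonal extraction would produce subsequential weak limits for every $K$, automatically consistent, and Kolmogorov extension would assemble a candidate limit measure on $\mathcal{C}_\infty$. One would then pass to the limit in the weak formulation (Definition \ref{def:MSI_finite}) to verify that this limit is a weak solution of the infinite-times equation, and conclude by the uniqueness part of Theorem \ref{th:wellposed_hierarchy}. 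In this alternative picture, assumptions \eqref{as:RR2} and \eqref{initial-tight} are precisely what powers the tightness estimate and thus explains why they appear in the hypotheses.
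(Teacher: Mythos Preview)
Your argument is correct, but it is essentially the shortcut the paper mentions and then deliberately does \emph{not} take. The paper's own proof is precisely your ``alternative route'': it uses Lemma~\ref{lemma:tightness} with \eqref{initial-tight} to get uniform-in-$N$ moment bounds, applies Lemma~\ref{lm:tightness-R^N} at each fixed $t$ to extract subsequential weak limits, runs a diagonal argument over rational times, upgrades to all $t$ via an equi-continuity estimate coming from the weak formulation~\eqref{eq:measure-solution-finite}, and finally identifies the limit using the uniqueness part of Theorem~\ref{th:wellposed_hierarchy}. Your direct reduction to Theorem~\ref{th:wellposed_hierarchy}(iii) is shorter and, notably, does not actually use assumptions~\eqref{as:RR2} or~\eqref{initial-tight}; these hypotheses are exactly what powers the tightness machinery, as you correctly observed. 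What the paper's approach buys is a self-contained demonstration of the tightness method in this setting, which it then reuses for Proposition~\ref{prop:stationary-infinite} where no analogue of Theorem~\ref{th:wellposed_hierarchy}(iii) is available. One small slip: you describe the tightness route as something the authors ``suggest'' --- in fact it is the proof they give.
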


A first step towards this result is to prove the tightness of the family $\{n_N\}_N$.
%----------------------------
\begin{lemma}\label{lm:tightness-R^N} Given a sequence of probability measures $\{n_N\}_{N\geq1}$, , $n_N\in \mathcal{P}(\Rp_N)$ with the uniform bound,
\begin{equation}\label{uniform-tight-bound}
\int_{\Rp_{N}} \sum_{i=1}^{N}\frac{|s_i|}{2^i}\; n_N(d[s]_{N}) \leq C_0<+\infty,\quad \forall N\in\mathbb{N}^+.
\end{equation}
 Then, we can find a subsequence $\{n_{N_j}\}_{j\geq 1}$ and an infinite dimensional measure $n_{\infty}\in\mathcal{P}(\Rp_\infty)$, such that the following weak limit holds, for every $K$-marginal
\begin{equation}\label{tight-limit}
 n_{N_j}^{(K)}\rightarrow n_{\infty}^{(K)},\quad \text{weakly in $\mathcal{P}(\Rp_K)$},\quad \text{as $N_j$ goes to infinity.}
 \end{equation}
% Of course, $n_{N_j}^{(K)}$ only makes sense for $N_j\geq K$, but this won't affect the limit \eqref{tight-limit}.
\end{lemma}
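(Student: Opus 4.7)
The strategy is to extract weak limits of the $K$-marginals for each $K$ by a tightness plus diagonal argument, check that the family of limit marginals is consistent, and then assemble them into a measure on $\mathcal{C}_\infty$ via the Kolmogorov extension theorem recalled in the appendix.

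\emph{Step 1 (finite-dimensional tightness).} Fix $K\geq 1$. For any $N\geq K$, the moment assumption \eqref{uniform-tight-bound} gives in particular
\[
\int_{\Rp_K} \frac{s_K}{2^K}\, n_N^{(K)}(d[s]_K) \;=\; \int_{\Rp_N} \frac{s_K}{2^K}\, n_N(d[s]_N) \;\leq\; C_0,
\]
since the weight $\sigma_N$ dominates $s_K/2^K$ on $\mathcal{C}_N$. Because $0\leq s_1\leq\cdots\leq s_K$, a bound on $\int s_K\, dn_N^{(K)}$ gives a tightness estimate on $\mathcal{C}_K$: the sublevel sets $\{[s]_K:s_K\leq R\}$ are compact in $\mathcal{C}_K$, and their complements have $n_N^{(K)}$-mass bounded by $2^K C_0/R$. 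Hence $\{n_N^{(K)}\}_{N\geq K}$ is tight in $\mathcal{P}(\mathcal{C}_K)$, and Prokhorov's theorem supplies a weakly convergent subsequence.

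\emph{Step 2 (diagonal extraction).} Apply Step~1 successively for $K=1,2,\ldots$ and take a diagonal subsequence $\{N_j\}_{j\geq 1}$ such that, for every fixed $K$, $n_{N_j}^{(K)}\rightharpoonup \mu^{(K)}$ weakly in $\mathcal{P}(\mathcal{C}_K)$ for some probability measure $\mu^{(K)}$.

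\emph{Step 3 (consistency of the marginals).} For any $K\geq 1$ and any test function $\phi\in C_b^0(\mathcal{C}_K)$, the relation
\[
\int_{\mathcal{C}_{K+1}} \phi([s]_K)\, n_{N_j}^{(K+1)}(d[s]_{K+1}) \;=\; \int_{\mathcal{C}_K} \phi([s]_K)\, n_{N_j}^{(K)}(d[s]_K)
\]
holds for every $j$ with $N_j\geq K+1$. Passing to the weak limit on both sides yields
\[
\int_{\mathcal{C}_{K+1}} \phi([s]_K)\, \mu^{(K+1)}(d[s]_{K+1}) \;=\; \int_{\mathcal{C}_K} \phi([s]_K)\, \mu^{(K)}(d[s]_K),
\]
so that the $K$-marginal of $\mu^{(K+1)}$ coincides with $\mu^{(K)}$. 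Thus $\{\mu^{(K)}\}_K$ is a consistent family in the sense of Definition \ref{def:consistent}.

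\emph{Step 4 (Kolmogorov extension).} By the Kolmogorov extension theorem, there exists $n_\infty\in\mathcal{P}(\mathcal{C}_\infty)$ whose $K$-marginals satisfy $n_\infty^{(K)}=\mu^{(K)}$ for every $K$. Combining this with Step~2 gives the desired convergence \eqref{tight-limit}.

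\emph{Main obstacle.} The only subtle point is ensuring a single subsequence works for all marginals simultaneously; the diagonal argument in Step~2 handles this, and relies on the fact that the moment bound \eqref{uniform-tight-bound} produces tightness in every marginal uniformly in $N$, which is exactly the role of the weight $\sigma_N$ in \eqref{def:weight}.
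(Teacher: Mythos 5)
Your proof is correct and follows essentially the same route as the paper: tightness of each fixed $K$-marginal from the uniform moment bound, a diagonal subsequence, verification of consistency by testing against functions depending only on the first $K$ variables, and then the Kolmogorov extension theorem. The one small simplification — controlling only $\int s_K\, dn_N^{(K)}$ and using the ordering $0\leq s_1\leq\cdots\leq s_K$ to conclude that $\{s_K\leq R\}\cap\mathcal{C}_K$ is compact — is a neat observation but not a different argument; the paper keeps the full weight $\sum_{i\leq K}|s_i|/2^i$, which gives the same conclusion.
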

\begin{proof}
The uniform bound \eqref{uniform-tight-bound} implies that for each $K$-marginal
\begin{equation} \int_{\Rp_K}\big(\sum_{i=1}^{K}\frac{1}{2^i}|s_i|\big)n_N^{(K)}(d[s]_K)\leq \int_{\Rp_N}\big(\sum_{i=1}^{N}\frac{1}{2^i}|s_i|\big)n_N(d[s]_{N})\leq C_0<+\infty,
\end{equation} 
is uniformly bounded for $N\geq K$. Therefore $\{n_N^{(K)}\}$ is a tight sequence in $\mathcal{P}(\Rp_K)$, which allows us to extract a subsequence $\{n_{N_j}\}_{j\geq 1}$ such that, for a probability measure $n_{\infty}^{(K)}$ on $\Rp_K$,
\begin{equation}\label{tight-limit-pf-tmp1}
 n_{N_j}^{(K)}\rightarrow n_{\infty}^{(K)},\quad \text{weakly in $\mathcal{P}(\Rp_K)$},\quad \text{as $N_j$ goes to infinity}.
\end{equation} 
Starting with $K=1$, by a diagonal argument, we can extract a new subsequence, still denoted as $\{n_{N_j}\}$, such that \eqref{tight-limit-pf-tmp1} holds for every $K\geq 1$. 
%Note that although $n_{N_j}^{(K)}$ is undefined for $K>N_j$, \eqref{tight-limit-pf-tmp1} still makes sense as it only concerns the limit $N_j$ goes to infinity.

It remains to define $n_{\infty}$. To do so, 
%$n_{\infty}^{(K)}$ is just some probability measure on $\Rp_K$ for each $K$. 
we  claim that by our diagonal construction, the following consistency relation holds
\begin{equation}\label{consist-tight-pf-tmp}
\big(n_{\infty}^{(K+1)}\big)^{(K)}=n_{\infty}^{(K)},
\end{equation}
%which means that the marginal distribution of $n_\infty^{(K+1)}$ on $\Rp_K$ is $n_{\infty}^{(K)}$. 
Indeed, by \eqref{tight-limit-pf-tmp1} we have for all $\psi([s]_{K+1})\in C_b^0(\Rp_{K+1})$
\begin{equation*}
    \int_{\Rp_{K+1}}\psi([s]_{K+1})n_{N_j}^{(K+1)}(d[s]_{K+1})\rightarrow  \int_{\Rp_{K+1}}\psi([s]_{K+1})n_{\infty}^{(K+1)}(d[s]_{K+1}),\quad \text{as $N_j$ goes to infinity}.
\end{equation*} Taking $\psi$ that depending only on the first $K$ variables $[s]_K$ in the above equation, and using \eqref{tight-limit-pf-tmp1} for $n_{N_j}^{(K)}$ we obtain \eqref{consist-tight-pf-tmp}.
Thanks to \eqref{consist-tight-pf-tmp} we apply the Kolmogorov extension theorem to obtain an infinite dimensional measure $n_{\infty}\in\mathcal{P}(\Rp_{\infty})$, such that $n_{\infty}^{(K)}$ is indeed the $K$-marginal of $n_{\infty}$.
\end{proof}

Lemma \ref{lm:tightness-R^N} gives a tightness criteria for a sequence of finite-dimensional measures $\{n_N\}$ where each $n_N$ is in $\mathcal{P}(\Rp_N)$. It naturally induces a tightness criteria for a sequence of measures in $\mathcal{P}(\Rp_{\infty})$, where the weak limit is in the sense of Definition \ref{def:weak_continuity_infinite}.

\begin{corollary}[Tightness on $\Rp_\infty$]
 For a sequence of infinite dimensional measures $\{n_{\infty,j}\}_j$ of $\mathcal{P}(\Rp_\infty)$, suppose the uniform bound holds
\begin{equation*}
\int_{\Rp_{\infty}} \sum_{i=1}^{+\infty}\frac{|s_i|}{2^i}\; n_{\infty,j}(d[s]_{\infty})\leq C_0<+\infty, \quad \forall j.
\end{equation*}
Then we can extract a subsequence that weakly converges to some $n_{\infty}\in\mathcal{P}(\Rp_{\infty})$ in the sense of Definition \ref{def:weak_continuity_infinite}.
\end{corollary}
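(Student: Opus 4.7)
The plan is to reduce the corollary to the already-established finite-dimensional tightness (Lemma \ref{lm:tightness-R^N}) by projecting each infinite-dimensional measure onto its $K$-marginals and running a diagonal extraction. The key observation is that the moment function $\sigma_\infty([s]_\infty) = \sum_{i=1}^\infty |s_i|/2^i$ dominates each finite truncation $\sigma_K([s]_K) = \sum_{i=1}^K |s_i|/2^i$, so the assumed infinite-dimensional moment bound passes directly to every marginal.

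First I would fix $K \geq 1$ and consider the sequence of $K$-marginals $\{n_{\infty,j}^{(K)}\}_j \subset \mathcal{P}(\mathbb{R}_+^K)$. Since
\begin{equation*}
\int_{\mathbb{R}_+^K}\sigma_K([s]_K)\, n_{\infty,j}^{(K)}(d[s]_K) \leq \int_{\mathbb{R}_+^\infty}\sigma_\infty([s]_\infty)\, n_{\infty,j}(d[s]_\infty) \leq C_0
\end{equation*}
and $\sigma_K$ has compact sublevel sets in $\mathbb{R}_+^K$, the family $\{n_{\infty,j}^{(K)}\}_j$ is tight in $\mathcal{P}(\mathbb{R}_+^K)$ by Prokhorov's theorem, so we can extract a subsequence converging weakly to some limit $n_\infty^{(K)} \in \mathcal{P}(\mathbb{R}_+^K)$.

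Second, I would perform a diagonal extraction exactly as in the proof of Lemma \ref{lm:tightness-R^N}: starting from $K=1$, successively refine the subsequence to ensure convergence of the $K$-marginals for every $K$, and then keep the diagonal. Along this final subsequence, denoted again $\{n_{\infty,j}\}_j$, we have $n_{\infty,j}^{(K)} \rightharpoonup n_\infty^{(K)}$ in $\mathcal{P}(\mathbb{R}_+^K)$ for every $K \geq 1$.

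Third, I would verify the consistency relation $(n_\infty^{(K+1)})^{(K)} = n_\infty^{(K)}$ by testing against $\phi([s]_K) \in C_b(\mathbb{R}_+^K)$, viewed as a bounded continuous function on $\mathbb{R}_+^{K+1}$ that does not depend on $s_{K+1}$; weak convergence of both $n_{\infty,j}^{(K)}$ and $n_{\infty,j}^{(K+1)}$ and the identity $(n_{\infty,j}^{(K+1)})^{(K)} = n_{\infty,j}^{(K)}$ pass to the limit. The Kolmogorov extension theorem (Appendix \ref{ap:K}) then produces a unique $n_\infty \in \mathcal{P}(\mathbb{R}_+^\infty)$ whose $K$-marginal is $n_\infty^{(K)}$, and by Definition \ref{def:weak_continuity_infinite} this is precisely the required weak convergence. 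There is no real obstacle here: the only thing to check carefully is that the diagonal subsequence genuinely yields convergence of every marginal simultaneously, and this is the standard argument already used in Lemma \ref{lm:tightness-R^N}.
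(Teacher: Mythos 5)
Your proof is correct and follows exactly the same route the paper uses for Lemma~\ref{lm:tightness-R^N} itself: project to $K$-marginals, pass the moment bound to each marginal, extract a diagonal subsequence, verify consistency, and invoke the Kolmogorov extension theorem, which is precisely what the paper's (omitted) proof of the corollary amounts to. One could also obtain it as a black-box application of Lemma~\ref{lm:tightness-R^N} to $\tilde n_j := n_{\infty,j}^{(j)}$, noting that $\tilde n_j^{(K)} = n_{\infty,j}^{(K)}$ once $j \geq K$, but your explicit re-run is an equally faithful rendering of the intended argument.
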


Now we can prove Theorem~\ref{th:weak_limit_of_finite}. 
\begin{proof} [Proof of Theorem \ref{th:weak_limit_of_finite}.]
Thanks to the uniform moment estimate in Lemma \ref{lemma:tightness} and the initial bound \eqref{initial-tight}, we obtain that the uniform-in-$N$ bound \eqref{uniform-tight-bound} holds for $n_N(t)$ at each time $t\geq 0$. Hence, for a fixed time $t$ we can apply Lemma~\ref{lm:tightness-R^N} to obtain a subsequence for which \eqref{eq:weak_convergence_lem_2} holds at time $t$. Then, by a diagonal argument, we can further extract a subsequence such that \eqref{eq:weak_convergence_lem_2} holds for all $t\in [0,+\infty)\cap \mathds{Q}$, a countable many points. At this stage, we could not identify the limit measure as the solution to the infinite problem yet. 

To extend this convergence to arbitrary $t\in [0,+\infty)$, it suffices to show that if \eqref{eq:weak_convergence_lem_2} holds for $\{t_n\}$ with $t_n\rightarrow t$ as $n\rightarrow\infty$, then \eqref{eq:weak_convergence_lem_2} also holds for $t$. This will follow from an equi-continuity estimate. Precisely, we shall show for a fixed $K\geq1$ and $\psi\in C_b^0(\Rp_K)$, the map
\begin{equation}\label{equ-map}
    t \longmapsto\quad \int_{\Rp_N}n_N(t,d[s]_N)\psi([s]_K)
\end{equation} is equi-continuous for $N\geq K$. To this end we shall use the weak formulation \eqref{eq:measure-solution-finite}, which however needs $\psi\in C^1_b(\Rp_K)$. Indeed, for $\psi\in C^1_b(\Rp_K)$, \eqref{eq:measure-solution-finite} implies that the map \eqref{equ-map} is Lipschitz continuous with uniform-in-$N$ Lipschitz constants, since the renewal rate has a uniform upper bound \eqref{as:RR2}. For general $\psi\in C_b^0(\Rp_K)$, we can still obtain the equi-continuity via a density argument.

Now we have the limit for each time $t\geq 0$. By the dominated convergence theorem we derive the integrated-in-time limit \eqref{eq:weak_convergence_lem}. These allows us to pass the limit in the weak formulation from \eqref{eq:measure-solution-finite} to \eqref{eq:measure-solution}. Hence, by the uniqueness in Theorem \ref{th:wellposed_hierarchy}, we can identify the limit measure as the solution to the infinite-times problem. It follows that the subsequential convergence can be improved to the convergence for the full sequence.

\end{proof}
\subsection{Steady states of the infinite-time renewal equation}

By tightness, we can also show the existence of a steady state to the infinite-times equation.

\begin{proposition}\label{prop:stationary-infinite}
Assume \eqref{as:RR1} and \eqref{as:RR2}. Then there exists a steady state for the infinite-times renewal equation.
\end{proposition}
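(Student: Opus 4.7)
The plan is to construct $n_\infty^*$ as a weak subsequential limit of the $N$-times stationary distributions $n_N^*$ produced by Theorem~\ref{thm:ststN}. The starting point is the uniform tightness estimate \eqref{uniform-moment-steady}, which reads
\[
\int_{\Rp_N} \sum_{i=1}^N \frac{s_i}{2^i}\, n_N^*(d[s]_N) \leq \frac{2}{a_-}
\]
uniformly in $N$. This is exactly the hypothesis of Lemma~\ref{lm:tightness-R^N}, so I would apply it to extract a subsequence $(n_{N_j}^*)_j$ whose $K$-marginals converge weakly in $\mathcal{P}(\Rp_K)$ for every $K$, along with a limit $n_\infty^* \in \mathcal{P}(\mathcal{C}_\infty)$ obtained via Kolmogorov extension and whose $K$-marginals are these weak limits. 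This $n_\infty^*$ is the natural candidate steady state.

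The next step is to verify that $n_\infty^*$ satisfies the steady-state version of Definition~\ref{def:MSI}. Taking time-independent test functions $\psi \in C_b^1(\mathcal{C}_K)$, the identity to establish is
\[
\int_{\Rp_\infty} n_\infty^*(d[s]_\infty)\Big(\sum_{i=1}^K \partial_{s_i}\psi([s]_K) + p_\infty([s]_\infty)\big(\psi(\tau[s]_K) - \psi([s]_K)\big)\Big) = 0,
\]
while the analogous identity for $n_N^*$ with $p_N$ comes from Definition~\ref{def:MSI_finite}. The transport term reduces to an integral of $\sum_{i=1}^K \partial_{s_i}\psi \in C_b^0(\Rp_K)$ against the $K$-marginal, and thus passes to the limit immediately by the weak convergence just obtained.

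The hard part is the renewal term, since $p_N$ and $p_\infty$ depend on arbitrarily many variables while the weak limit controls only the marginals. To handle it, I would use the splitting
\[
p_N([s]_N) = \underbrace{\sum_{i=1}^K \varphi_i([s]_i)}_{= \,p_K([s]_K)} + \sum_{i=K+1}^N \varphi_i([s]_i), \qquad p_\infty([s]_\infty) = p_K([s]_K) + \sum_{i=K+1}^\infty \varphi_i([s]_i).
\]
The $p_K$ block depends only on $[s]_K$, so its contribution to each integral reduces to a test against the $K$-marginal and converges by the weak convergence above (invoking the standard continuity of the $\varphi_i$, or a density argument). The two tail blocks, by Assumption~\eqref{as:RR1} and the fact that $n_N^*$ and $n_\infty^*$ are probability measures, are each bounded in absolute value by $\|\psi \circ \tau - \psi\|_\infty\, \eps_K$, uniformly in $N$. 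Sending $N_j \to \infty$ first and then $K \to \infty$ annihilates the tails and matches both identities, yielding the stationary equation for $n_\infty^*$ and completing the construction.
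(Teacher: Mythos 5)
Your construction matches the paper's proof exactly: Theorem~\ref{thm:ststN} supplies the uniform moment bound \eqref{uniform-moment-steady}, Lemma~\ref{lm:tightness-R^N} extracts the weakly convergent subsequence of marginals, and the Kolmogorov extension assembles the candidate $n_\infty^*$. Where the paper leaves the verification that the limit is stationary implicit, you spell out the tail-splitting argument; one small imprecision to fix there is that, since $\eps_K$ is a fixed positive number once the test function $\psi\in C_b^1(\Rp_K)$ is chosen, the splitting index must be decoupled from $K$ --- split $p_N=p_J+\sum_{i>J}\varphi_i$ for an auxiliary $J\geq K$ (viewing $\psi$ as a test function on $\Rp_J$), send $N_j\to\infty$ at fixed $J$, and only then send $J\to\infty$, exactly as in the second step of the proof of Theorem~\ref{th:uniqueness}.
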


From Definition \ref{def:MSI}, a steady state $n_{\infty}^*\in\mathcal{P}(\Rp_{\infty})$ satisfies
\begin{equation}
    \int_{\Rp_\infty} n_{\infty}(t,d[s]_{\infty})\Big(\sum_{i=1}^{K}\p_{s_i}\psi([s]_{K})+p_{\infty}([s]_\infty)\big(\psi(\tau[s]_K)-\psi([s]_K)\big)\Big)=0,
\end{equation} for all $K\geq 1$ and $\psi([s]_K)\in C_b^1(\Rp_K)$.

\begin{proof}
Theorem~\ref{thm:ststN} gives that for each $N\geq 1$ the $N$-times equation has a unique steady state $n_N^*$, and that these steady states satisfy a uniform moment bound \eqref{uniform-moment-steady}. This allows us to apply Lemma~\ref{lm:tightness-R^N} to subtract a weakly convergent subsequence of $n_{N}^*$ in the sense of \eqref{tight-limit}, whose limit gives a steady state to the infinite-times equation.
\end{proof}

The long term convergence of the evolution problem to the steady state is studied in Section~\ref{sec:uniformError} and Section~\ref{sec:MK-BIG}. In both sections the uniqueness of the steady state is obtained, but additional assumptions, such as \eqref{def-doeblin-cond-sec4} and \eqref{doeblin-constant-bound}, on the renewal rates are needed.

%
%----------------------------------------------------------------
\section{Exponential convergence to steady state in Monge-Kantorovich distance}
\label{sec:MK-BIG}
%--------------------------------------------------------------

For the $N$-times equation, we have shown the exponential convergence to the steady state via the Doeblin method, in $L^1$ or the total variation distance (Theorem \ref{thm:ststN}). However, the total variation distance does not work for the infinite-times equation, as illustrated by the following example.

Consider two Dirac masses $\delta_{[s^1]_{\infty}}$ and $\delta_{[s^2]_{\infty}}$  concentrated respectively at
\begin{equation*}
[s^1]_{\infty}=(k_1,2k_1,...,Nk_1,...),\quad [s^2]_{\infty}=(k_2,2k_2,...,Nk_2,...),
\end{equation*} with two positive real numbers $k_1\not=k_2$. Let $n_{\infty}^1(t)$ and $n_{\infty}^2(t)$ be the solution to the infinite-times equation \eqref{eq:infinite} with initial data $\delta_{[s^1]_{\infty}}$ and $\delta_{[s^2]_{\infty}}$, respectively. We claim that the total variation distance between $n_{\infty}^1(t)$ and $n_\infty^2(t)$ will always remain $2$. Indeed, as a finite number of jumps do not change the tail behavior, the support of $n_{\infty}^1(t)$ and $n_\infty^2(t)$ will lie in disjoint subsets of $\Rp_{\infty}$. More precisely
\begin{align*}
    &\text{supp }n_{\infty}^1(t)\subseteq \{[s]_{\infty}\in \Rp_{\infty}: \exists N\,\text{ s.t. }\, s_{n+1}-s_n=k_1,\quad \forall n\geq N\},\\
    &\text{supp }n_{\infty}^2(t)\subseteq \{[s]_{\infty}\in \Rp_{\infty}: \exists N\,\text{ s.t. }\, s_{n+1}-s_n=k_2,\quad \forall n\geq N\}.
\end{align*} As $k_1\neq k_2$, the support of $n_{\infty}^1(t)$ of $n_{\infty}^2(t)$ are disjoint therefore the total variation distance
\begin{equation*}
    \|n_{\infty}^1(t)-n_{\infty}^2(t)\|_{\mathcal{M}^1(\Rp_\infty)}=    \|n_{\infty}^1(t)\|_{\mathcal{M}^1(\Rp_\infty)}+\|n_{\infty}^2(t)\|_{\mathcal{M}^1(\Rp_\infty)}=2.
\end{equation*}

In consistency with this example, we also see that the convergence rate in $L^1$ for the $N$-times equation degenerates as $N\rightarrow\infty$ (Section \ref{subsec:strong_condition}). In Section \ref{sec:uniformError} this is compensated by a fast-decay assumption on the renewal rate, to obtain long time convergence for each $N$-marginal. However the result is qualitative and there is no convergence rate.

As discussed above, to obtain exponential convergence, we need a proper metric other than the total variation. This leads us to use  the Monge-Kantorovich distance (M.-K. in short). The M.-K. distance can be applied to PDEs, \cite{FP2020} and, in particular, to structured equations such as the renewal equation, see~\cite{fournier2021non}. Often, a special design of the transport cost is needed to fit the structure of a particular problem.

For the $N$-times equation, we find a suitable cost function, such that the exponential convergence of the M.-K. distance can be obtained, with a \textit{uniform-in-$N$} rate (i.e. does not degenerate as $N\rightarrow\infty$.). This naturally extends to the exponential convergence to the  steady state for the infinite-times equation. The $N$-times case and the infinite-times case are discussed in Section \ref{sec:MK} and Section \ref{sec:infinite_MK_convergence}, respectively, together with certain assumptions needed for the renewal rate.

%--------------------------------------------------------------
\subsection{Uniform convergence to steady state in Monge-Kantorovich distance}
\label{sec:MK}
%---------------------------------------------------------------

Given a distance $V([s]_N,[s']_N)$ on $\Rp_{N}$ (possibly $N=\infty$, see section~\ref{sec:infinite_MK_convergence}),  also called a cost function, we can define the corresponding M.-K. transport  distance between two probability measures $n_N$ and $m_N$ in $\mathcal{P}(\Rp_{N})$ as
\begin{equation}\label{def-MK-coupling}
\left\{
\begin{split}
& \mathcal{T}_{V}(n_N,m_N):=\inf_{\omega_{N}\in\mathcal{H}(n_N,m_N)}\iint V([s]_N,[s']_N)\omega_{N}(d[s]_N,d[s']_N),
\\[5pt]
& \mathcal{H}(n_N,m_N)=\{\omega_{N}\in\mathcal{P}\big(\Rp_N\times\Rp_N\big)\textup{ with marginals $n_N$ and $m_N$}\}.
\end{split}
\right.
\end{equation}
For more on optimal transport and the M.-K. distance, see e.g. \cite{TopicsVillani2003,Santambrogio_book}.

We are now going to prove the uniform-in-dimension exponential convergence, in the sense of the M.-K. distance. Specifically, the convergence rate does not rely on the dimension of the system, while being related to a variant of the Lipschitz constant for $p_N([s]_N)$.

A key ingredient of the result is to consider the following cost function
\begin{equation} \label{def:TransportFunction}
V_{N,\beta,a}([s]_N,[s']_N):=\sum_{i=1}^{N}\frac{|s_i-s_i'|\wedge a}{(1+\beta)^i} \leq \frac a \beta,
\end{equation} where  $a,\, \beta>0$ are two parameters. Intuitively, it gives less importance to earlier spike times by a factor $\frac{1}{1+\beta}$, and a truncation is imposed to make the cost function bounded. We shall see later in the proof how these designs make a uniform-in-$N$ convergence rate possible.

\begin{theorem}[Exponential contraction in M-K distance] \label{th:finite_monge_convergence}
For the $N$-times renewal equation, we assume that the renewal rate satisfies \eqref{as:RR2} and that we can choose $\beta,a>0$ and $\delta>0$ such that
\begin{equation}\label{eq:condition_of_MK_convergence-1}
\Big|p_N([s]_N)-p_N([s']_N)\Big|\leq \delta \,  V_{N,\beta,a}([s]_N,[s']_N),\qquad \text{for all $[s]_N,[s']_N\in \mathcal{C}_{N}$,}\quad
\end{equation} 
and $\delta>0$ is small enough in the sense
\begin{equation}\label{eq:condition_of_MK_convergence-2}
      \gamma:=\frac{\beta a_-}{1+\beta}-\frac{a\delta}{\beta}>0.
\end{equation}
Then, for two solutions to the $N$-times equation $n_N(t)$ and $m_N(t)$, the following exponential contraction holds
\begin{equation}\label{expoential-contraction}
\mathcal{T}_{V_{N,\beta,a}}(n_N(t),m_N(t))\leq \mathcal{T}_{V_{N,\beta,a}}(n_N(0),m_N(0))e^{-\gamma t},
\end{equation}
where $\mathcal{T}_{V_{N,\beta,a}}$ is the M.-K. distance induced by the cost function \eqref{def:TransportFunction}. In particular, \eqref{expoential-contraction} implies the exponential convergence to the steady state with rate $\gamma>0$.
\end{theorem}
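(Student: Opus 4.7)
My plan is to proceed by a coupling argument, constructing a time-evolving joint measure whose marginals solve \eqref{eqKTR} and tracking its expected transport cost.

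\emph{Step 1: the coupling.} Given any initial coupling $\omega_N(0) \in \mathcal{H}(n_N(0), m_N(0))$, I would build $\omega_N(t) \in \mathcal{P}(\mathcal{C}_N \times \mathcal{C}_N)$ as the solution of the joint equation corresponding to the following infinitesimal dynamics: both components drift at unit speed in every coordinate; at rate $p_N([s]_N) \wedge p_N([s']_N)$ both components undergo a simultaneous renewal $([s]_N,[s']_N) \mapsto (\tau[s]_N, \tau[s']_N)$; at residual rate $(p_N([s]_N) - p_N([s']_N))_+$ only $[s]_N$ is reset, and symmetrically for $[s']_N$. Marginalizing in $[s']_N$ recovers \eqref{eqKTR} for $n_N$ because the total renewal rate experienced by $[s]_N$ is $(p_N([s]) \wedge p_N([s'])) + (p_N([s]) - p_N([s']))_+ = p_N([s])$; similarly for $m_N$. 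Hence $\omega_N(t) \in \mathcal{H}(n_N(t), m_N(t))$ for all $t \geq 0$.

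\emph{Step 2: evolution of the cost.} Set $I(t) := \int V_{N,\beta,a}([s]_N, [s']_N) \, \omega_N(t, d[s]_N, d[s']_N)$. The drift terms preserve each difference $s_i - s_i'$ and contribute nothing. Using $(\tau[s])_1 = 0 = (\tau[s'])_1$ and $(\tau[s])_i = s_{i-1}$ for $i \geq 2$, one computes
\[
V_{N,\beta,a}(\tau[s]_N, \tau[s']_N) = \frac{1}{1+\beta}\Big(V_{N,\beta,a}([s]_N, [s']_N) - \frac{|s_N - s_N'|\wedge a}{(1+\beta)^N}\Big) \leq \frac{1}{1+\beta}\,V_{N,\beta,a}([s]_N,[s']_N),
\]
so each common jump contracts the cost by the factor $1/(1+\beta)$. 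For the asymmetric jumps, the post-jump cost is bounded crudely by the global maximum $\sum_{i=1}^N a/(1+\beta)^i \leq a/\beta$. Combining these contributions, using the lower bound $p_N([s]) \wedge p_N([s']) \geq a_-$ from \eqref{as:RR2} and the Lipschitz-type bound $|p_N([s]) - p_N([s'])| \leq \delta\, V_{N,\beta,a}([s],[s'])$ from \eqref{eq:condition_of_MK_convergence-1}, I obtain
\[
\frac{d}{dt} I(t) \leq -\frac{\beta a_-}{1+\beta}\, I(t) + \frac{a\delta}{\beta}\, I(t) = -\gamma\, I(t),
\]
with $\gamma$ as in \eqref{eq:condition_of_MK_convergence-2}. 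Gronwall yields $I(t) \leq I(0) e^{-\gamma t}$, and choosing $\omega_N(0)$ to be an optimal coupling gives \eqref{expoential-contraction}. Applying this with $m_N = n_N^*$, whose existence is guaranteed by Theorem \ref{thm:ststN}, produces the exponential convergence to the steady state and, together with contraction, its uniqueness.

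\emph{Main obstacle.} The delicate step is making Step 1 fully rigorous: writing the joint equation for $\omega_N$ with its boundary conditions at $\{s_1 = 0\}$ and $\{s_1'=0\}$ (which involve the $\tau$-jumps of each type), establishing well-posedness in $\mathcal{P}(\mathcal{C}_N \times \mathcal{C}_N)$, and checking that the partial marginals really satisfy \eqref{eqKTR}. An equivalent route is to construct the coupling pathwise as a piecewise-deterministic Markov process and then pass to its law. A secondary subtlety in Step 2 is that $V_{N,\beta,a}$ is merely Lipschitz, not $C^1$, because of the truncation $|\cdot|\wedge a$; this can be handled either by a smooth approximation of $V_{N,\beta,a}$ or by deriving the differential inequality on $I(t)$ directly from a weak formulation of the joint equation against Lipschitz test functions.
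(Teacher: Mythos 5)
Your proposal is correct and follows essentially the same route as the paper: both construct a coupled evolution $\omega_N(t)$ with synchronous renewals at rate $p_N([s]_N)\wedge p_N([s']_N)$ and asynchronous renewals at the residual rates $\big(p_N([s]_N)-p_N([s']_N)\big)_\pm$, show that each synchronous renewal contracts the cost by $1/(1+\beta)$ while each asynchronous one costs at most $a/\beta$, bound the asynchronous rate by $\delta\,V_{N,\beta,a}$, and conclude by Gronwall after choosing an optimal initial coupling. The obstacles you flag — rigorous construction of the joint evolution (which the paper delegates to the argument in the two-times setting of Fournier--Perthame) and the non-$C^1$ truncation in $V_{N,\beta,a}$ (which the paper also handles by regularization) — are exactly the ones the authors address, so your plan is a faithful reproduction of their proof; as a small remark, your identity $V_{N,\beta,a}(\tau[s]_N,\tau[s']_N)=\frac{1}{1+\beta}\bigl(V_{N,\beta,a}([s]_N,[s']_N)-\frac{|s_N-s_N'|\wedge a}{(1+\beta)^N}\bigr)$ is in fact stated more carefully (as an exact identity plus a dropped nonnegative term) than the paper's corresponding line, which contains a sign slip before the final inequality.
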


The constant $\delta$ in \eqref{eq:condition_of_MK_convergence-1} can be understood as the Lipschitz constant of $p_N([s]_N)$ relative to $V_{N,\beta,a}([s]_N,[s']_N)$. It is supposed to be small enough to ensure the convergence rate $\gamma>0$ in \eqref{eq:condition_of_MK_convergence-2}. In particular, if we can find fixed $\beta,a>0$ and $\delta>0$ such that \eqref{eq:condition_of_MK_convergence-1}-\eqref{eq:condition_of_MK_convergence-2} hold for all $N\geq1$, then long term exponential convergence holds for the $N$-times equation, with a uniform-in-$N$ convergence rate $\gamma>0$.

 We first present the proof of Theorem \ref{th:finite_monge_convergence}, after which we will discuss explicit conditions on the renewal rate to ensure \eqref{eq:condition_of_MK_convergence-1}-\eqref{eq:condition_of_MK_convergence-2} and give examples.

%---------------------------------------------
\begin{proof}[Proof of Theorem \ref{th:finite_monge_convergence}]

\textit{Step 1. The coupling evolution.} 
We use the coupling method. Given two initial data $n_N(0)$ and $m_N(0)$, and one initial coupling $\omega_N(0)\in\mathcal{H}(n_N(0),m_N(0))$ (see \eqref{def-MK-coupling} for the definition), we want to determine an evolution of the coupling measure $w_N(t)$. In other words, for each time $t$, $\omega_N(t)$ is a coupling between $n_N(t)$ and $m_N(t)$. By definition of the M.-K. distance \eqref{def-MK-coupling} we have,
\begin{equation}\label{MK-Coupling-t}
\mathcal{T}_{V_{N,\beta,a}}\big(n_N(t),m_N(t)\big)\leq \int_{\mathcal{C}_N\times\mathcal{C}_N}V_{N,\beta,a}([s]_N,[s']_N)\omega_N(t,d[s]_N,d[s']_N)=:T_{V_{N,\beta,a}}(\omega_N),
\end{equation}where we define $T_{V_{N,\beta,a}}(\omega_N)$ as the transport cost of coupling measure $\omega_N$. The evolution of $\omega_N(t)$ shall be constructed in such a way that we can estimate the right hand side of \eqref{MK-Coupling-t}.

To find such a proper coupling $\omega_N(t)$, we choose a strategy following \cite{fournier2021non}. We define $\omega_N(t)$ as the weak solution of the following equation
\begin{align}\label{eq:evolution_of_coupled_measure}
 \p_t\omega_N(t,[s]_N,[s']_N)+&\sum_{i=1}^{N}\Big((\p_{s_i}+\p_{s_i'})\omega_N(t,[s]_N,[s']_N)\Big)+\max\big\{p_{N}([s]_N),p_N([s']_N)\big\}\omega_N(t,[s]_N,[s']_N) \notag\\
=&\delta_0(s_1)\delta_0(s_1') \iint \min\big\{p_N([s]_{2,N},u),p_N([s']_{2,N},u')\big\}\omega_N(t,[s]_{2,N},du,d[s']_{2,N},du') \notag \\
&+\delta_0(s_1)\int \big(p_N([s]_{2,N},u)-p_N([s']_N)\big)_{+}\omega_N(t,[s]_{2,N},du,[s']_N)\\
&+ \delta_0(s_1')\int \big(p_N([s]_N)-p_N([s']_{2,N},u')\big)_{+}\omega_N(t,[s]_N,[s']_{2,N},du').\notag
\end{align}
In terms of weak solution, this means we want find a $\omega_N(t)$ such that, for arbitrary $0\leq T<+\infty$ and test function $\psi\in C_b^1\big([0,T]\times\mathcal{C}_N\times\Rp_N\big)$,
\begin{equation}\label{eq:coupling_evolution_weak}
\begin{split}
\int_0^T& \! \!\iint_{\mathcal{C}_N\times\mathcal{C}_N} \psi(t,[s]_N,[s']_N)\omega_N(t,d[s]_N,d[s']_N)dt\\
=&\int_0^T \! \! \iint_{\mathcal{C}_N\times\mathcal{C}_N}\sum_{i=1}^{N}\Big(\frac{\p}{\p_{s_i}}+\frac{\p}{\p_{s_i'}}\Big) \psi(t,[s]_N,[s']_N)\omega_N(t,d[s]_N,d[s']_N)dt\\
&+\int_0^T \! \! \iint_{\mathcal{C}_N\times\mathcal{C}_N}\Big(\Big[\psi(t,\tau[s]_N,\tau[s']_N)-\psi(t,[s]_N,[s']_N)\Big]\min\big\{p_N([s]_N), p_N([s']_N)\big\}\\
&+ \Big[\psi(t,\tau[s]_N,[s']_N)-\psi(t,[s]_N,[s']_N)\Big]\big(p_N([s]_N)- p_N([s']_N)\big)_{+}\\
&+\Big[\psi(t,[s]_N,\tau[s']_N)-\psi(t,[s]_N,[s']_N)\Big]\big(p_N([s']_N)- p_N([s]_N)\big)_{+}\Big)\omega_N(t,d[s]_N,d[s']_N)dt.
\end{split}
\end{equation}

\textit{Microscopic description.} The above equation can be seen as the evolution of the probability distribution for a stochastic process on $\mathcal{C}_N\times \mathcal{C}_N$. Given a particle at $([s]_N,[s']_N)$, it will jump to $(\tau[s]_N,\tau[s']_N)$ with rate $\min\{p_N([s]_{N}),p_N([s']_{N})\}$, jump to $(\tau[s]_N,[s']_N)$ with rate $\big(p_N([s]_{N})-p_N([s']_N)\big)_{+}$ and jump to $([s]_N,\tau[s']_N)$ with rate $\big(p_N([s']_N)-p_N([s]_N)\big)_+$ (we recall the shift operator $\tau$ is defined by Equation~\eqref{def:shift_operator}). Considering the marginal distribution, we can see that given a particle at $[s]_N$, it will jump to $\tau[s]_N$ with rate $p_N([s]_N)$, because,
\begin{equation}\label{eq:marginal_summation}
\min\{p_N([s]_{N}),p_N([s']_{N})\}+\big(p_N([s]_{N})-p_N([s']_N)\big)_{+}=p_N([s]_N).
\end{equation}

\textit{Existence of evolution.} The existence of a solution to Equation~\eqref{eq:coupling_evolution_weak} is similar to that in \cite[Section 4]{fournier2021non}, which uses a prior tightness estimate to treat the coupling of the two-times equation.

\textit{Proof strategy for Theorem~\ref{th:finite_monge_convergence}.} Working under the microscopic description above, we can see that if a coupled particle at $([s]_N,[s']_N)$ jumps to $(\tau[s]_N,\tau[s']_N)$, the transport cost will decay as follows,
\begin{equation}
V_{N,\beta,a}(\tau[s]_N,\tau[s']_N)\leq \frac{1}{1+\beta} V_{N,\beta,a}([s]_N,[s']_N).
\end{equation}
This is essentially the reason of the exponential convergence in the M.-K. distance. Meanwhile, we also need to control the asynchronous jump from $([s]_N,[s']_N)$ to $(\tau[s]_N,[s']_N)$ or $([s]_N,\tau[s']_N)$, with rate $\big|p_N([s]_N)-p_N([s']_N)\big|$. This is done by controlling $\big|p_N([s]_N)-p_N([s']_N)\big|$ using $V_{N,\beta,a}([s]_N,[s']_N)$, thanks to Assumption \eqref{eq:condition_of_MK_convergence-1}.

\textit{Step 2. Estimate of M.-K. distance.} 
%Now we have proved the existence of the coupling measure $\omega_N(t)$ and are going to use it
We now estimate the M.-K. distance in the spirit of controlling asynchronous jump mentioned above.

Although $V_{N,\beta,a}$ is not $C^1$, we can still use it as a test function in~\eqref{eq:coupling_evolution_weak}, by regularization arguments. Also, the directional derivative term  $\Big(\frac{\p}{\p_{s_i}}+\frac{\p}{\p_{s_i'}}\Big) V_{N,\beta,a}([s]_N,[s']_N)$ vanishes because  $V_{N,\beta,a}([s]_N,[s']_N)$ depends only on each $s_i-s'_i$, and we find
\begin{equation}\label{eq:coupling_evolution}
\begin{split}
\frac{d}{dt}&\iint_{\mathcal{C}_N\times\mathcal{C}_N} V_{N,\beta,a}([s]_N,[s']_N)\omega_N(t,d[s]_N,d[s']_N)\\
=&\iint_{\mathcal{C}_N\times\mathcal{C}_N}\Big(\Big[V_{N,\beta,a}(\tau [s]_N,\tau [s']_N)-V_{N,\beta,a}([s]_N,[s']_N)\Big]\big(p_N([s]_N)\wedge p_N([s']_N)\big)\\
&+\Big[V_{N,\beta,a}(\tau [s]_N,[s']_N)-V_{N,\beta,a}([s]_N,[s']_N)\Big]\big(p_N([s]_N)- p_N([s']_N)\big)_{+}\\
&+\Big[V_{N,\beta,a}([s]_N,\tau [s']_N)-V_{N,\beta,a}([s]_N,[s']_N)\Big]\big(p_N([s']_N)- p_N([s]_N)\big)_{+}\Big)\omega_N(t,d[s]_N,d[s']_N).
\end{split}
\end{equation}

For the second last line in Equation~\eqref{eq:coupling_evolution}, using $0\leq V_{N,\beta,a} \leq \frac{a}{\beta}$ and \eqref{eq:condition_of_MK_convergence-1}, we write
\begin{equation*}
\begin{split}
& \Big|V_{N,\beta,a}(\tau [s]_N,[s']_N)-V_{N,\beta,a}([s]_N,[s']_N)\Big|\leq \frac{a}{\beta},\\
& \Big|V_{N,\beta,a}([s]_N,\tau[s']_N)-V_{N,\beta,a}([s]_N,[s']_N)\Big|\leq \frac{a}{\beta},\\
& (p_N([s]_N)-p_N([s']_N))_++(p_N([s']_N)-p_N([s]_N))_+\leq \delta V_{N,\beta,a}([s]_N,[s']_N).
\end{split}
\end{equation*}
For the third last line in Equation~\eqref{eq:coupling_evolution}, we notice that
\begin{equation*}
\begin{split}
&V_{N,\beta,a}(\tau [s]_N,\tau [s']_N)-V_{N,\beta,a}([s]_N,[s']_N)=\frac{\beta}{1+\beta}V_{N,\beta,a}([s]_N,[s']_N),\\
& p_N([s]_N)\wedge p_N([s']_N)\geq a_-.
\end{split}
\end{equation*}
Combining equations above, we can write Equation~\eqref{eq:coupling_evolution} as
\begin{equation*}
\begin{split}
\frac{d}{dt}\int_{\Rp_N\times\Rp_N} &V_{N,\beta,a}([s]_N,[s']_N)\omega_N(t,d[s]_N,d[s']_N)\\
\leq & -\iint_{\Rp_N\times\Rp_N}\frac{\beta a_-}{1+\beta}V_{N,\beta,a}([s]_N,[s']_N)\omega_N(t,d[s]_N,d[s']_N)\\
&+\iint_{\Rp_N\times\Rp_N}\frac{a\delta}{\beta}V_{N,\beta,a}([s]_N,[s']_N)\omega_N(t,d[s]_N,d[s']_N).
\end{split}
\end{equation*}
Recalling the definition of $T_{V_{N,\beta,a}}(\omega_N)$ in Equation \eqref{MK-Coupling-t}, this leads to,
\begin{equation*}
\frac{d}{dt}T_{V_{N,\beta,a}}\big(\omega_N(t)\big)\leq T_{V_{N,\beta,a}}\big(\omega_N(t)\big)\Big(-\frac{\beta a_-}{1+\beta}+\frac{a\delta}{\beta}\Big)= -\gamma  T_{V_{N,\beta,a}}\big(\omega_N(t)\big),
\end{equation*}
with $\gamma >0$ defined in \eqref{eq:condition_of_MK_convergence-2}. 
By Gronwall's inequality and \eqref{def-MK-coupling} we have
\begin{align*}
\mathcal{T}_{V_{N,\beta,a}}\big(n_N(t),m_N(t)\big)&\leq T_{V_{N,\beta,a}}\big(\omega_N(t)\big)\\&\leq e^{-\gamma t}T_{V_{N,\beta,a}}\big(\omega_N(0)\big)\\&=e^{-\gamma t}\iint V_{N,\beta,a}([s]_N,[s']_N)\omega_N(0,d[s]_N,d[s']_N).
\end{align*}
Finally, taking the infimum among all initial coupling $w_N(0)\in\mathcal{H}(n(0),m(0))$ in the right of the above equation, we conclude the proof by definition of the M.-K. distance \eqref{def-MK-coupling}.
\end{proof}
%
%-----------------------------
To give an explicit and sufficient condition to satisfy Assumptions \eqref{eq:condition_of_MK_convergence-1}-\eqref{eq:condition_of_MK_convergence-2}, we first define some constants. For $\beta>0$, we define the weighted maximum Lipschitz constant of $\varphi_i([s]_i)$ $L_N$ and and the weighted maximum of fluctuation $F_N$ as
\begin{equation}\label{eq:modified_lipschitz}
\begin{cases}
L_N(\beta):=\max_{1\leq i\leq N}\sup_{[s]_i,[s']_i}(1+\beta)^i\frac{|\varphi_i([s]_i)-\varphi_i([s']_i)|}{|s_i-s_i'|},
\\[5pt]
%\begin{equation}\label{eq:maximum_fluctuation}
F_N(\beta):=\max_{1\leq i\leq N}\sup_{[s]_i,[s']_i}(1+\beta)^i\Big|\varphi_i([s]_i)-\varphi_i([s']_i)\Big|.
\end{cases}
\end{equation}
%----------------------
\begin{lemma}\label{lem:decayingg_Lipschitz}
Under Assumptions \eqref{as:RR1} and \eqref{as:RR2} and for arbitrary $1\leq N\leq +\infty$, if we have a proper $\beta>0$ and a small enough $a>0$ such that,
\begin{equation}\label{eq:decaying_lipschitz}
F_N(\beta)<\frac{a_- \beta^2}{1+\beta},\qquad \textup{and} \qquad a L_N(\beta)<\frac{a_- \beta^2}{1+\beta},
\end{equation}
then Assumptions \eqref{eq:condition_of_MK_convergence-1}-\eqref{eq:condition_of_MK_convergence-2} hold with $\delta$ and $\gamma$ defined as,
\begin{equation}\label{eq:delta}
\delta:=\max\{\frac{F_N(\beta)}{a},L_N(\beta)\}, \qquad \gamma:=\frac{\beta a_-}{1+\beta}-\frac{F_N(\beta)\vee a L_N(\beta)}{\beta}.
\end{equation}
\end{lemma}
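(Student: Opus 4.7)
The proof is a direct verification: unpack the two pieces of data bundled into~\eqref{eq:modified_lipschitz} and split the bound on $|s_i-s_i'|$ against the threshold $a$, then check that the smallness condition~\eqref{eq:decaying_lipschitz} yields $\gamma>0$ with the announced expression.

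First, I would use the additive form $p_N = \sum_{i=1}^{N}\varphi_i$ from Assumption~\eqref{as:RR1} together with the triangle inequality to write
\begin{equation*}
\bigl|p_N([s]_N)-p_N([s']_N)\bigr| \;\leq\; \sum_{i=1}^{N}\bigl|\varphi_i([s]_i)-\varphi_i([s']_i)\bigr|,
\end{equation*}
so that it suffices to control each summand by $\delta\,(1+\beta)^{-i}\bigl(|s_i-s_i'|\wedge a\bigr)$ and then sum. Unpacking~\eqref{eq:modified_lipschitz}, each summand admits two pointwise estimates,
\begin{equation*}
\bigl|\varphi_i([s]_i)-\varphi_i([s']_i)\bigr|\;\leq\;\frac{L_N(\beta)}{(1+\beta)^i}\,|s_i-s_i'|, \qquad \bigl|\varphi_i([s]_i)-\varphi_i([s']_i)\bigr|\;\leq\;\frac{F_N(\beta)}{(1+\beta)^i}.
\end{equation*}

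The key step is a case split on $|s_i-s_i'|$ against $a$. If $|s_i-s_i'|\leq a$, the Lipschitz bound reads $L_N(\beta)(1+\beta)^{-i}\bigl(|s_i-s_i'|\wedge a\bigr)$; if $|s_i-s_i'|>a$, the fluctuation bound rewrites as $\bigl(F_N(\beta)/a\bigr)(1+\beta)^{-i}\bigl(|s_i-s_i'|\wedge a\bigr)$. In either regime the prefactor is at most $\delta=\max\{L_N(\beta),F_N(\beta)/a\}$, which is exactly the choice~\eqref{eq:delta}. Summing over $i$ and recognising the right-hand side as $\delta\, V_{N,\beta,a}([s]_N,[s']_N)$ produces assumption~\eqref{eq:condition_of_MK_convergence-1}.

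It remains to check~\eqref{eq:condition_of_MK_convergence-2}. Using $a\delta=\max\{aL_N(\beta),F_N(\beta)\}=F_N(\beta)\vee a L_N(\beta)$, the rate from Theorem~\ref{th:finite_monge_convergence} becomes
\begin{equation*}
\gamma \;=\; \frac{\beta a_-}{1+\beta}-\frac{F_N(\beta)\vee a L_N(\beta)}{\beta},
\end{equation*}
and the two inequalities gathered in~\eqref{eq:decaying_lipschitz} are precisely what is needed to make both $aL_N(\beta)$ and $F_N(\beta)$ strictly smaller than $a_-\beta^2/(1+\beta)$, which forces each candidate for the maximum to satisfy $\max\{aL_N(\beta),F_N(\beta)\}/\beta < a_-\beta/(1+\beta)$ and hence $\gamma>0$. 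I do not expect any genuine obstacle: all the creative input already sits in the design of the truncated weighted cost $V_{N,\beta,a}$ and of the twin weighted constants $L_N,F_N$, so what remains is pure bookkeeping that matches the definitions to~\eqref{eq:condition_of_MK_convergence-1}--\eqref{eq:condition_of_MK_convergence-2}.
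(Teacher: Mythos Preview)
Your proof is correct and follows essentially the same approach as the paper's own argument. The only cosmetic difference is that the paper phrases the key step as a ratio bound via the mediant inequality $\frac{\sum_i a_i}{\sum_i b_i}\le\max_i\frac{a_i}{b_i}$, whereas you bound each summand $|\varphi_i([s]_i)-\varphi_i([s']_i)|$ termwise by $\delta\,(1+\beta)^{-i}(|s_i-s_i'|\wedge a)$ and then sum; the underlying case split on $|s_i-s_i'|\lessgtr a$ and the verification of $\gamma>0$ are identical.
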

\begin{proof}
Indeed, by Assumption \eqref{as:RR2} we have
\begin{equation*}
\begin{split}
\frac{|p_N([s]_N)-p_N([s']_N)|}{V_{N,\beta,a}([s]_N,[s']_N)}\leq \frac{\sum_{i=1}^{N}|\varphi_i([s]_i)-\varphi_i([s']_i)|}{\sum_{i=1}^{N}\frac{1}{(1+\beta)^i}(|s_i-s_i'|\wedge a)}&\leq  \max_{1\leq i\leq N}\Big\{\frac{|\varphi_i([s]_i)-\varphi_i([s']_i)|}{\frac{1}{(1+\beta)^i}(|s_i-s_i'|\wedge a)}\Big\}\\
&\leq \frac{F_N(\beta)}{a}\vee L_N(\beta):= \delta.
\end{split}
\end{equation*}
Then the expression \eqref{eq:delta} of $\delta$, and thus \eqref{eq:condition_of_MK_convergence-1}-\eqref{eq:condition_of_MK_convergence-2}, follow.
\end{proof}
While the Assumption \eqref{eq:decaying_lipschitz} may seem restrictive, we give an explicit example here for $N=+\infty$, and the examples of $N<+\infty$ can be readily derived from it. It also serves as an example for the Assumption \eqref{eq:MK_conditions_infinite} in Theorem \ref{th:infinite_MK_convergence}. This explicit example is 
\begin{equation*}
p_{\infty}([s]_{\infty})=\sum_{i=1}^{+\infty}\frac{(a_-\vee s_i)\wedge C a_-}{(1+\beta)^i}.
\end{equation*}
Here $C>1$ must satisfy a proper condition that we explain later. In this example, there is a subtle trade-off between the fluctuation of $\varphi_i([s]_i)$ relative to $[s]_i$, and the exponential decay of this fluctuation relative to $i$. With the notation~\eqref{eq:modified_lipschitz} extended to $N=\infty$,  we have $F_\infty(\beta)=(C-1)a_-$ and $L_\infty(\beta)\leq1$. When  $C-1$ is large, the assumption $C-1\leq \frac{\beta^2}{1+\beta}$ requires that the parameter  $\beta$ is also large. More generally, given any positive Lipschitz function $f(s)$ defined for $s\geq 0$, we can use the renewal rate as the following where $\eps$ is a small enough positive number,
\begin{equation*}
p_{\infty}([s]_{\infty})=\sum_{i=1}^{+\infty}\frac{1}{(1+\beta)^i}\Big[\big(a_-\vee f(s_i)\big)\wedge \big(\frac{\beta^2}{1+\beta}+1-\eps\big)a_-\Big].
\end{equation*}

\subsection{Exponential convergence for measure solutions in $\Rp_\infty$}
\label{sec:infinite_MK_convergence}
We now extend the exponential convergence result in Section~\ref{sec:MK} to the infinite-times renewal equation. 
\begin{theorem}\label{th:infinite_MK_convergence}\textup{\textbf{(Exponential Convergence of Measure Solutions)}}
Assume the renewal rate satisfies \eqref{as:RR1} and \eqref{as:RR2}. We also assume that we can choose $\beta,a>0$ and $\delta>0$ such that for arbitrary $[s]_{\infty},[s']_{\infty}\in\mathcal{C}_{\infty}$,
\begin{equation}\label{eq:MK_conditions_infinite}
\Big|p_\infty([s]_\infty)-p_\infty([s']_\infty)\Big|\leq \delta \,  V_{\infty,\beta,a}([s]_\infty,[s']_\infty)\qquad \textup{and}\qquad \gamma:=\frac{\beta a_-}{1+\beta}-\frac{a\delta}{\beta}>0.
\end{equation}
Then for the infinite-times equation, given two different initial probability distributions $n_{\infty}(0),m_{\infty}(0)\in \mathcal{P}(\Rp_{\infty})$, we have the corresponding global-in-time solutions  $n_{\infty}(t)$ and $m_{\infty}(t)$ as in Definition \ref{def:MSI}. The two solutions satisfy the following exponential convergence of M.-K. distance,
\begin{equation*}
\mathcal{T}_{V_{\infty,\beta,a}}\big(n_\infty(t),m_\infty(t)\big)\leq \mathcal{T}_{V_{\infty,\beta,a}}\big(n_\infty(0),m_\infty(0)\big)e^{-\gamma t}.
\end{equation*}
\end{theorem}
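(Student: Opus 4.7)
The strategy is to derive the infinite-dimensional contraction from Theorem~\ref{th:finite_monge_convergence} by an approximation argument. For each $N$, I denote by $n_N(t), m_N(t)$ the solutions of the $N$-times equation starting from the $N$-marginals $n_\infty^{(N)}(0), m_\infty^{(N)}(0)$. Theorem~\ref{th:wellposed_hierarchy} guarantees that, for every fixed $K$, the marginals $n_N^{(K)}(t), m_N^{(K)}(t)$ converge weakly to $n_\infty^{(K)}(t), m_\infty^{(K)}(t)$, locally uniformly in $t$.

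First I would transfer the Lipschitz hypothesis \eqref{eq:MK_conditions_infinite} from $p_\infty$ to $p_N$. Given $[s]_N, [s']_N \in \mathcal{C}_N$, extend both by the \emph{same} strictly increasing tail $t_i := \max(s_N, s'_N) + (i - N)$ for $i > N$, obtaining points in $\mathcal{C}_\infty$ whose $V_{\infty,\beta,a}$-distance equals $V_{N,\beta,a}([s]_N, [s']_N)$ (the tail contributions cancel). Because the two extensions share their tails, the difference between $p_\infty - p_\infty'$ and $p_N - p_N'$ involves only the sum $\sum_{i>N}(\varphi_i - \varphi_i')$, which is bounded by $2\eps_N$ thanks to Assumption~\eqref{as:RR1}. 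This gives the perturbed Lipschitz bound
\begin{equation*}
|p_N([s]_N) - p_N([s']_N)| \leq \delta\, V_{N,\beta,a}([s]_N, [s']_N) + 2\eps_N.
\end{equation*}
Inserting this into the coupling computation of the proof of Theorem~\ref{th:finite_monge_convergence} introduces an additional inhomogeneous term $\tfrac{2a\eps_N}{\beta}$ in the differential inequality for the transport cost, so Gronwall yields
\begin{equation*}
\mathcal{T}_{V_{N,\beta,a}}(n_N(t), m_N(t)) \leq e^{-\gamma t}\,\mathcal{T}_{V_{N,\beta,a}}(n_\infty^{(N)}(0), m_\infty^{(N)}(0)) + \frac{2a\eps_N}{\beta\gamma}.
\end{equation*}

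To pass to the limit I use two projection monotonicities. Any coupling of $(n_\infty(0), m_\infty(0))$ on $\mathcal{C}_\infty \times \mathcal{C}_\infty$ projects to a coupling of $(n_\infty^{(N)}(0), m_\infty^{(N)}(0))$ with smaller cost, since $V_{N,\beta,a} \leq V_{\infty,\beta,a}$ pointwise, giving $\mathcal{T}_{V_{N,\beta,a}}(n_\infty^{(N)}(0), m_\infty^{(N)}(0)) \leq \mathcal{T}_{V_{\infty,\beta,a}}(n_\infty(0), m_\infty(0))$; analogously, for $K \leq N$, $\mathcal{T}_{V_{K,\beta,a}}(n_N^{(K)}(t), m_N^{(K)}(t)) \leq \mathcal{T}_{V_{N,\beta,a}}(n_N(t), m_N(t))$. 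Since $V_{K,\beta,a}$ is bounded and continuous, the functional $\mathcal{T}_{V_{K,\beta,a}}$ is lower semi-continuous under weak convergence of its two arguments, so sending $N \to \infty$ yields
\begin{equation*}
\mathcal{T}_{V_{K,\beta,a}}(n_\infty^{(K)}(t), m_\infty^{(K)}(t)) \leq e^{-\gamma t}\,\mathcal{T}_{V_{\infty,\beta,a}}(n_\infty(0), m_\infty(0)).
\end{equation*}
Finally, letting $K \to \infty$, any coupling $\omega_K$ of $(n_\infty^{(K)}(t), m_\infty^{(K)}(t))$ is extended to a coupling $\omega_\infty$ of $(n_\infty(t), m_\infty(t))$ by tensoring with regular conditional probabilities of $n_\infty(t), m_\infty(t)$ given the first $K$ coordinates (which exist by the Polish structure of $\mathcal{C}_\infty$). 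The geometric tail estimate $V_{\infty,\beta,a} - V_{K,\beta,a} \leq \frac{a}{\beta(1+\beta)^K}$ then gives
\begin{equation*}
\mathcal{T}_{V_{\infty,\beta,a}}(n_\infty(t), m_\infty(t)) \leq \mathcal{T}_{V_{K,\beta,a}}(n_\infty^{(K)}(t), m_\infty^{(K)}(t)) + \frac{a}{\beta(1+\beta)^K},
\end{equation*}
and sending $K \to \infty$ closes the argument.

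The main obstacles are twofold: carefully modifying the Doeblin-style coupling computation of Theorem~\ref{th:finite_monge_convergence} to absorb the residual $\eps_N$ without destroying the exponential rate $\gamma$, and then combining the two limits $N \to \infty$ and $K \to \infty$ cleanly via weak lower semi-continuity (for projection) and Polish-space disintegration (for extension).
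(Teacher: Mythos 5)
Your proof is correct, and it takes a genuinely different route from the paper. The paper's own argument constructs the evolution of an infinite-dimensional coupling $\omega_\infty(t)\in C_w\big([0,\infty);\mathcal{P}(\Rp_\infty\times\Rp_\infty)\big)$ as a weak limit of the finite-dimensional couplings $\omega_N(t)$ from Theorem~\ref{th:finite_monge_convergence}, invoking ``generalizing the local-in-time strong limits in Theorems~\ref{th:uniqueness} and~\ref{th:wellposed_hierarchy} to the coupling measures,'' and then reruns the differential inequality directly in infinite dimensions; the existence and properties of $\omega_\infty(t)$ are left mostly implicit. You instead never leave finite dimensions: you transfer the Lipschitz condition \eqref{eq:MK_conditions_infinite} to $p_N$ at the price of an additive residual $2\eps_N$ (by choosing identical increasing tail extensions of $[s]_N$ and $[s']_N$, so the tail contribution to $V_{\infty,\beta,a}$ vanishes and the tail of $p_\infty-p_\infty'$ is crudely bounded by $2\eps_N$), rerun the finite-dimensional coupling estimate to get the inhomogeneous Gr\"onwall bound with error $\tfrac{2a\eps_N}{\beta\gamma}$, and pass to the limit by combining two soft ingredients: projection monotonicity of transport costs (both from $\infty$ to $N$ at $t=0$ and from $N$ to $K\leq N$ at time $t$) with lower semicontinuity of $\mathcal{T}_{V_{K,\beta,a}}$ under weak convergence of marginals; the return from $K$ to $\infty$ uses the disintegration extension and the geometric tail bound $V_{\infty,\beta,a}-V_{K,\beta,a}\leq\tfrac{a}{\beta(1+\beta)^K}$. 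The advantage of your approach is that it rests entirely on the already-proved finite-dimensional contraction and Theorem~\ref{th:wellposed_hierarchy}(iii), avoiding the construction of an infinite-dimensional coupling evolution that the paper only sketches, so it is arguably more self-contained and rigorous. The price is a double limit ($N\to\infty$ then $K\to\infty$) and a small bookkeeping overhead with the $\eps_N$ residual; the paper's direct argument, once the existence of $\omega_\infty(t)$ is accepted, produces the contraction in one stroke.
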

%-------------------

Notice that by replacing the $N<+\infty$ in the Assumptions \eqref{eq:condition_of_MK_convergence-1}-\eqref{eq:condition_of_MK_convergence-2} of Theorem~\ref{th:finite_monge_convergence} with $N=+\infty$, we recover  Assumption \eqref{eq:MK_conditions_infinite} of Theorem \ref{th:infinite_MK_convergence}. 

In principle, Theorem~\ref{th:infinite_MK_convergence} can be proved via taking the limit $N\rightarrow\infty$ in Theorem~\ref{th:finite_monge_convergence}, using the local-in-time strong convergence in Theorem \ref{th:wellposed_hierarchy}. However, here we rather want to give a direct proof in the same way to the finite-times case. To this end,  we treat the solution as an infinite dimension measure as in Definition \ref{def:MSI}, which is equivalent to the hierarchy viewpoint thanks to Lemma \ref{lemma:equivalence}.

\begin{proof}[Proof of Theorem~\ref{th:infinite_MK_convergence}]

We extend the coupling method in Theorem~\ref{th:infinite_MK_convergence} to the infinite-times case. First we want to build an evolution of the coupling measure $\omega_\infty(t)\in C_w\big([0,+\infty);\mathcal{M}(\Rp_\infty\times\Rp_\infty)\big)$, which generalizes \eqref{eq:coupling_evolution_weak} to the infinite-times case. The existence of $\omega_\infty(t)$ can be obtained by letting $N\rightarrow\infty$ for the $N$-times coupling measure $\omega_N(t)$ given in the proof of Theorem \ref{th:finite_monge_convergence}, generalizing the local-in-time strong limits in Theorem \ref{th:uniqueness} and \ref{th:wellposed_hierarchy} to the coupling measures $\omega_N(t)$. %using similar arguments to .

% %-----------------------

Based on the arguments above, we have an element $\omega_{\infty}\in C_w\big([0,+\infty);\mathcal{P}(\Rp_\infty\times\Rp_\infty)\big)$ satisfying,
\begin{equation*}
\begin{split}
\frac{d}{dt}&\iint_{\Rp_\infty\times\Rp_\infty} V_{\infty,\beta,a}([s]_{\infty},[s']_{\infty})\omega_{\infty}(t,d[s]_{\infty},d[s']_{\infty})\\
=&\iint_{\Rp_\infty\times\Rp_\infty}\sum_{i=1}^{+\infty}\Big(\frac{\p}{\p_{s_i}}+\frac{\p}{\p_{s_i'}}\Big) V_{\infty,\beta,a}([s]_\infty,[s']_\infty)\omega_\infty(t,d[s]_\infty,d[s']_\infty)\\
+&\iint_{\Rp_{\infty}\times\Rp_{\infty}}\Big(\Big[V_{\infty,\beta,a}(\tau [s]_\infty,\tau [s']_\infty)-V_{N,\beta,a}([s]_\infty,[s']_\infty)\Big]\min\big\{p_\infty([s]_\infty),p_\infty([s']_\infty)\big\}\\
+&\Big[V_{\infty,\beta,a}(\tau [s]_\infty,[s']_\infty)-V_{\infty,\beta,a}([s]_\infty,[s']_\infty)\Big]\big(p_\infty([s]_\infty)- p_\infty([s']_\infty)\big)_{+}\\
+&\Big[V_{\infty,\beta,a}([s]_\infty,\tau [s']_\infty)-V_{\infty',\beta,a}([s]_\infty,[s']_\infty)\Big]\big(p_\infty([s']_\infty)- p_\infty([s]_\infty)\big)_{+}\Big)\omega_\infty(t,d[s]_\infty,d[s']_\infty).
\end{split}
\end{equation*}
Using the same method as  in Theorem~\ref{th:finite_monge_convergence} we find,
\begin{equation*}
\mathcal{T}_{V_{\infty,\beta,a}}\big(n_\infty(t),m_\infty(t)\big)\leq \mathcal{T}_{V_{\infty,\beta,a}}\big(n_\infty(0),m_\infty(0)\big)e^{-\gamma t}.
\end{equation*}
This concludes the proof.
\end{proof}
%-----------------------------
\section{Conclusion and discussions}

In Section \ref{sec:MK-BIG} we revisit the problem of long time behavior via a rather different approach. In particular, by using a suitable Monge-Kantorovich distance (instead of the strong $L^1$-distance), we can give a uniform-in-$N$ convergence rate. However, we still need more assumptions on the renewal rate than \eqref{as:RR1} and \eqref{as:RR2}. On the one hand, the question of long time behavior in more general cases, seems a difficult task; One the other hand, there has been many results (for example~\cite{GRAHAM_Hawkes_2019,Costa_Hawkes_2020}) about the long time behaviour of the Hawkes process, which is formally similar to a particular kind of infinite-times equations. We can ask if there are other types of assumptions, different from \eqref{as:RR1} and \eqref{as:RR2}, where there are other formulations facilitating the proof of convergence for infinite-times equations.

Concerning the possible nonlinearity in renewal equations, we can ask what kinds of nonlinearity would be suitable and meaningful for this topic. After a suitable choice of nonlinearity, questions about well-posedness and long-time behaviour could also be raised.

The formulation of hierarchy system \eqref{eqInfinity} is similar to the BBGKY hierarchy, while rather than infinite particles, we couple an infinite number of state variables here. However we can still ask if it's possible to have some kinds of 'propagation of chaos' in the infinite-times equation. Since in the present formulation it's necessary that $s_1\leq s_2\leq...$, the possibility of 'propagation of chaos' may be searched for based on some additional reformulations. Meanwhile in a heuristic way, we could regard the state variables $s_1\leq s_2\leq ...$ as an infinite number of particles with a strict order structure. This could be a starting point of exploring how to use infinite-dimensional PDE to describe a particle system with heterogeneous particles.

%--------------------------------------------------------------
%------------------------------------------------------
\appendix

\section{Doeblin Theorem}\label{sec:Doeblin_Theorem}

\begin{definition}
\textbf{(Markov semi-group).} Let $(\mathcal{X},\mathcal{A})$ be a measure space and $P_t:\mathcal{P}(\mathcal{X})\rightarrow \mathcal{P}(\mathcal{X})$ be a linear semi-group. We say that $P_t$ is a Markov semi-group if $P_t\mu\geq 0$ for all $\mu\geq 0$ and $\int_{X}P_t\mu=\int_{X}\mu$ for all $\mathcal{P}(\mathcal{X})$. In other words, $(P_t)$ preserves the subset of probability measure $\mathcal{P}(\mathcal{X})$.
\end{definition}
\begin{definition}
\textbf{(Doeblin's condition).} Let $P_t:\mathcal{P}(\mathcal{X})\rightarrow\mathcal{P}(\mathcal{X})$ be a Markov semi-group. We say that $(P_t)$ satisfies Doeblin's condition if there exist $t_0>0$ and $\nu\in\mathcal{P}(\mathcal{X})$ such that
\begin{equation} 
P_{t_0}\mu\geq \alpha\nu,\quad \forall \mu\in \mathcal{P}(\mathcal{X})
\end{equation}
\end{definition}
And then we state the Doeblin Theorem.
\begin{theorem}
\textbf{(Extended Doeblin's Theorem).} Let $P_t:\mathcal{P}(\mathcal{X})\rightarrow \mathcal{P}(\mathcal{X})$ be a Markov semi-group that satisfies Doeblin's condition. Then the semigroup has a unique equilibrium $\mu^*\in\mathcal{P}(\mathcal{X})$, Moreover, for all $\mu\in\mathcal{P}(\mathcal{X})$ we have
\begin{equation}
\lVert P_t\mu -\langle \mu \rangle \mu^*\rVert_{\mathcal{M}^1}\leq \frac{1}{1-\alpha}e^{-\lambda t}\lVert \mu-\langle \mu\rangle\mu^*\rVert_{\mathcal{M}^1},\quad \forall t\geq 0
\end{equation}
with $\langle \mu \rangle:=\int_{X}\mu$ and $\lambda=-\frac{\ln(1-\alpha)}{t_0}$.

When the semi-group maps $L^1$ into itself, the same result holds with $L^1$ in place of $\mathcal{P}$. Then, $\mu^* \in L^1$ and $\nu$ is automatically an $L^1$ function.
\end{theorem}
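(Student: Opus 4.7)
My plan is to reduce everything to a single contraction estimate on zero-mass signed measures, then invoke Banach's fixed-point theorem and iterate. The heart of the argument is the following lemma: for every $\mu \in \mathcal{M}(\mathcal{X})$ with $\langle \mu \rangle = 0$,
\[
\|P_{t_0}\mu\|_{\mathcal{M}^1} \;\leq\; (1-\alpha)\,\|\mu\|_{\mathcal{M}^1}.
\]
To prove this, use the Hahn--Jordan decomposition $\mu = \mu^+ - \mu^-$. Since $\langle\mu\rangle = 0$ forces $\mu^+(\mathcal{X}) = \mu^-(\mathcal{X}) =: m$ and $\|\mu\|_{\mathcal{M}^1} = 2m$, assuming $m>0$ the rescaled measures $\tilde\mu^\pm := \mu^\pm/m$ are probability measures, so Doeblin's condition yields $P_{t_0}\tilde\mu^\pm \geq \alpha\nu$. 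Writing
\[
P_{t_0}\mu \;=\; m\bigl(P_{t_0}\tilde\mu^+ - \alpha\nu\bigr) \;-\; m\bigl(P_{t_0}\tilde\mu^- - \alpha\nu\bigr)
\]
expresses $P_{t_0}\mu$ as the difference of two non-negative measures, each of mass $m(1-\alpha)$, so the total variation is bounded by $2m(1-\alpha) = (1-\alpha)\|\mu\|_{\mathcal{M}^1}$.

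With this lemma in hand, for any two probability measures $\mu_1,\mu_2$ the difference $\mu_1 - \mu_2$ lies in the closed (hence complete) subspace of zero-mass measures, so $P_{t_0}$ is a strict contraction of ratio $1-\alpha$ on the complete metric space $(\mathcal{P}(\mathcal{X}),\|\cdot\|_{\mathcal{M}^1})$. Banach's fixed-point theorem then produces a unique fixed point $\mu^* \in \mathcal{P}(\mathcal{X})$. The semigroup law gives $P_{t_0}(P_t\mu^*) = P_t(P_{t_0}\mu^*) = P_t\mu^*$, so by uniqueness $P_t\mu^* = \mu^*$ for all $t \geq 0$. For the quantitative rate, fix $\mu\in\mathcal{P}(\mathcal{X})$. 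The signed measure $\mu - \langle\mu\rangle\mu^*$ has zero mass, a property preserved by the semigroup, together with $\mathcal{M}^1$ non-expansion (again a direct consequence of the Hahn decomposition together with positivity and mass conservation of $P_r$). Writing $t = n t_0 + r$ with $r \in [0,t_0)$ and iterating the lemma $n$ times before applying $P_r$,
\[
\|P_t\mu - \langle\mu\rangle\mu^*\|_{\mathcal{M}^1} \leq (1-\alpha)^n \|\mu - \langle\mu\rangle\mu^*\|_{\mathcal{M}^1} \leq \tfrac{1}{1-\alpha}\,e^{-\lambda t}\,\|\mu - \langle\mu\rangle\mu^*\|_{\mathcal{M}^1},
\]
using $(1-\alpha)^n \leq (1-\alpha)^{-1}(1-\alpha)^{t/t_0}$ and the definition $\lambda = -\ln(1-\alpha)/t_0$.

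For the $L^1$ case, if $P_t$ also maps $L^1$ into itself then every step above stays valid with $\|\cdot\|_{L^1}$ in place of $\|\cdot\|_{\mathcal{M}^1}$, since the Jordan decomposition of an $L^1$ function consists of $L^1$ functions. The fixed point $\mu^*$ is obtained as a strong $L^1$ limit of iterates $P_{nt_0}\mu_0$ for any $L^1$ probability density $\mu_0$, hence lies in $L^1$; the lower bound $\alpha\nu \leq P_{t_0}\mu_0$ against any $L^1$ probability density forces $\nu \in L^1$ as well. The only non-routine step of this plan is the contraction lemma: it is where the Doeblin lower bound $\alpha\nu$ is converted, via the Hahn decomposition, into constructive cancellation between the positive and negative parts of a zero-mass signed measure at the quantitative rate $1-\alpha$. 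Everything else is standard semigroup bookkeeping, and I do not anticipate any serious obstacle beyond writing that lemma carefully.
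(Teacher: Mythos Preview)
Your proof is correct and follows exactly the standard argument---the contraction lemma on zero-mass signed measures via the Hahn--Jordan decomposition, then Banach's fixed-point theorem and iteration over blocks of length $t_0$---that the paper implicitly invokes by citing \cite{gabriel2018measure}; the paper itself gives no proof beyond that reference. There is nothing to add: your treatment of the $L^1$ case and the bookkeeping for the constant $\tfrac{1}{1-\alpha}$ are also in order.
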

For a proof of Doeblin's theorem, the readers may see \cite{gabriel2018measure}. With a slight modification, we can prove the extended Doeblin's theorem.

%---------------------------------------
\section{The Kolmogorov extension Theorem}
\label{ap:K}
%---------------------------------------

Denote by $\mathcal{R}^N$ the set of Borel sets in $\R^N$ with $N=1,2,...,\infty$, as a $\sigma$-algebra. Specifically, we define $\mathcal{R}^{\N}$ to be the set of Borel sets generated by the product topology on $\mathds{R}^\mathds{N}$, where we use $\N$ instead of $\infty$ as the superscript to emphasize the countable dimension on the space $\R^{\N}$.
%-------------------
\begin{theorem}[Kolmogorov's extension theorem] \label{th:Kolmogorov_Extension}
Assume we are given a sequence of probability measures $\mu_{N}$ on $(\R^N,\mathcal{R}^N)$ that is  consistent, which means $\mu_{N+1}^{(N)}=\mu_{N}$.
Then, there is a unique probability measure $\mu_{\infty}$ on $(\R^\N,\mathcal{R}^\N)$ such that for any $K$,
\begin{equation}
\mu_{\infty}^{(K)}=\mu_{K}.
\end{equation}
\end{theorem}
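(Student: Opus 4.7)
The plan is to build $\mu_\infty$ by the classical Carathéodory procedure on the algebra of cylinder sets. First I would introduce the algebra $\mathcal{A}\subset \mathcal{R}^{\N}$ of finite-dimensional cylinders, namely sets of the form $C = A\times \R^{\N}$ with $A\in\mathcal{R}^N$ for some $N\geq 1$, and define
\begin{equation*}
\mu_\infty(A\times \R^{\N}) := \mu_N(A).
\end{equation*}
The consistency hypothesis $\mu_{N+1}^{(N)}=\mu_N$ is precisely what is needed to make this definition independent of the ambient dimension used to represent the cylinder: if $A_1\times\R^{\N}=A_2\times\R^{\N}$ with $A_j\in\mathcal{R}^{N_j}$ and $N_1<N_2$, then necessarily $A_2=A_1\times\R^{N_2-N_1}$, and consistency iterated $N_2-N_1$ times yields $\mu_{N_2}(A_2)=\mu_{N_1}(A_1)$. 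Finite additivity on $\mathcal{A}$ then reduces, after lifting two disjoint cylinders to a common base $\R^N$, to finite additivity of $\mu_N$ on $\mathcal{R}^N$.

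Second, I would apply Carathéodory's extension theorem, for which the decisive step, and the only genuinely hard one, is to upgrade finite additivity on $\mathcal{A}$ to countable additivity, or equivalently to prove continuity at $\emptyset$: if $(C_k)_k\subset\mathcal{A}$ is decreasing with $\bigcap_k C_k=\emptyset$ then $\mu_\infty(C_k)\to 0$. The subtlety is that $\mathcal{A}$ is not closed under countable unions, so this cannot be read off directly from countable additivity of the $\mu_N$. The standard route is compact inner regularity: since each $\mu_N$ is a Borel probability on $\R^N$, it is Radon, so for each $k$ one can pick a compact cylinder $K_k\subset C_k$ with $\mu_\infty(C_k\setminus K_k)<\varepsilon/2^{k+1}$. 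Arguing by contradiction, if $\mu_\infty(C_k)\geq 2\varepsilon$ for all $k$, the finite intersections $\tilde K_k:=\bigcap_{j\leq k} K_j$ have mass at least $\varepsilon$, hence are nonempty. A diagonal extraction across the projections of $\tilde K_k$ onto the successive finite-dimensional factors (using Tychonoff/sequential compactness in $\R^{N_k}$) produces a point lying in every $C_k$, contradicting $\bigcap_k C_k=\emptyset$. Once continuity at $\emptyset$ is established, Carathéodory yields the extension to the generated $\sigma$-algebra, which is $\mathcal{R}^{\N}$ by construction of the product topology.

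Uniqueness follows from a Dynkin $\pi$--$\lambda$ argument: cylinder sets form a $\pi$-system generating $\mathcal{R}^{\N}$, and any two probability measures agreeing on this $\pi$-system must coincide on the generated $\sigma$-algebra. The marginal identity $\mu_\infty^{(K)}=\mu_K$ for every $K$ is then immediate from the very definition of $\mu_\infty$ on cylinders of the form $A\times\R^{\N}$ with $A\in\mathcal{R}^K$. The main obstacle of the whole proof is the compactness/diagonalization step verifying countable additivity; the rest of the argument is essentially bookkeeping, and indeed this is precisely why the result is classical and can be found, e.g., in standard references on measure-theoretic probability.
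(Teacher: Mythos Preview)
Your sketch is correct and follows the classical Carath\'eodory route: define a premeasure on the algebra of cylinder sets using consistency, prove continuity at $\emptyset$ via inner regularity and a diagonal compactness argument, extend via Carath\'eodory, and conclude uniqueness by the $\pi$--$\lambda$ theorem. This is precisely the standard textbook proof.

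Note, however, that the paper does not supply its own proof of this theorem: it simply states the result in the appendix and refers the reader to standard references (Tao's measure theory notes and Durrett's probability text). So there is nothing to compare against; your proposal is essentially the argument one finds in those references, and is entirely appropriate here.
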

%-----------------------------
Its proof can be found in \cite{TaoMeasure, Durrett_book} for instance. We can use this extension theorem to construct a solution of infinite-times renewal equation from a hierarchy solution. This construction is thus a part of the equivalence between the two kinds of solution as shown in Lemma \ref{lemma:equivalence}.

\section*{Acknowledgements}
ZZ has received support from   the National Key R\&D Program of China, Project Number 2021YFA1001200, 2020YFA0712000,  and the NSFC grant, Project Number 12031013, 12171013. BP has received funding from the European Research Council (ERC) under the European Union's Horizon 2020 research and innovation programme grant agreement No 740623. DS has received support from ANR ChaMaNe No: ANR-19-CE40-0024.

\section*{ORCID iDs}
Benoît Perthame: \href{https://orcid.org/0000-0002-7091-1200}{https://orcid.org/0000-0002-7091-1200}.\\
Delphine Salort: \href{https://orcid.org/0000-0003-4289-0286}{https://orcid.org/0000-0003-4289-0286}.\\
Zhennan Zhou: \href{https://orcid.org/0000-0003-4289-0286}{https://orcid.org/0000-0003-4822-0275}.

%\bibliography{bibIT.bib}
%\bibliographystyle{plain} %{vancouver}

\end{document}